\documentclass[11pt,a4paper]{article}
\usepackage{authblk}
\usepackage{bm}
\usepackage{geometry}
\usepackage{fancyhdr}
\usepackage{tikz}
\usetikzlibrary{shapes.geometric}
\usepackage{booktabs}
\usepackage{amsmath,amsthm,amssymb,amsfonts,mathrsfs,amscd,mathtools}
\usepackage{enumerate}
\usepackage[hidelinks,unicode]{hyperref}
\allowdisplaybreaks[2]
\newtheorem{thm}{Theorem}[section]
\newtheorem{prop}[thm]{Proposition}
\newtheorem{lem}[thm]{Lemma}
\newtheorem{cor}[thm]{Corollary}

\newenvironment{abs}{\par\noindent\textbf{Abstract.}\enspace\normalfont\ignorespaces}{\par}
\theoremstyle{definition}
\newtheorem{definition}[thm]{Definition}
\newtheorem{example}[thm]{Example}

\newtheorem{remark}[thm]{Remark}
\numberwithin{equation}{section}
\renewcommand{\labelenumi}{\textup{(\arabic{enumi})}}
\newcommand{\D}{\mathbb D}
\newcommand{\T}{\mathbb T}
\newcommand{\Cseq}{\mathcal C}
\newcommand{\CM}{\mathrm{CM}}
\newcommand{\Cplus}{\mathcal C_{+}^{\mathrm c}}
\newcommand{\Cbi}{\mathcal C_{\leftrightarrow}^{\mathrm c}}
\newcommand{\Pc}{\mathcal P_{\mathrm c}}
\newcommand{\Kc}{\mathcal K_{\mathrm c}}

\hypersetup{pdftitle={Characterizations and boundary regularity of continuous preservers of Carleson interpolating sequences},pdfauthor={Jian Wu}}

\begin{document}

\title{Characterizations and boundary regularity of continuous preservers of Carleson interpolating sequences}
\author{Jian Wu\thanks{Corresponding author: \texttt{xingxingwu2022@163.com}}}
\affil{}
\renewcommand*{\Affilfont}{\small\itshape}
\renewcommand\Authands{ and }
\date{}
\maketitle

\begin{abs}
We characterize the continuous mappings of the unit disk that preserve every
Carleson interpolating sequence. Preservation in one direction is equivalent to
being a disk homeomorphism whose inverse is uniformly continuous in the
pseudohyperbolic metric and whose associated weighted pushforward is bounded
on positive Carleson measures. Preservation in both directions is equivalent
to being a disk homeomorphism that, together with its inverse, is uniformly
continuous in that metric and has a strongly quasisymmetric boundary extension. Within this class,
comparability of the boundary defects is equivalent to a bi-Lipschitz
boundary map. Building on the canonical factorization established in our
earlier work, we give explicit criteria for membership in the continuous
boundary kernel. For a homeomorphism of the closed disk
with identity boundary values and a uniform boundary expansion of first order,
kernel membership is equivalent to positivity of the determinant of the real boundary differential,
or equivalently of the inward normal derivative of the boundary defect.
No differentiability in the interior is required. The $C^1$ criterion on the
closed disk follows as a corollary, while a counterexample shows that
pointwise boundary differentiability does not suffice. We also characterize
factorizations with a quasiconformal kernel and record the
exact maximal dilatation of its radial factor.
\end{abs}

\medskip
\noindent\textbf{Keywords:} Carleson interpolating sequences;
Carleson measures; pseudohyperbolic metric; strong quasisymmetry;
quasiconformal mappings

\smallskip
\noindent\textbf{Mathematics Subject Classification (2020):}
30H05, 30C62, 30E05

\section{Introduction}
\label{sec:introduction}

Carleson interpolating sequences provide a basic link between bounded
analytic interpolation, reproducing kernels, and the geometry of the unit
disk. Write $\D=\{z\in\mathbb C:|z|<1\}$ and $\T=\partial\D$.
A sequence $\Lambda=(\lambda_j)$ in $\D$ is interpolating for
$H^\infty(\D)$ if every bounded scalar sequence is the restriction to
$\Lambda$ of a function in $H^\infty(\D)$. We call these sequences
\emph{Carleson sequences} and denote their class by $\Cseq$.
Carleson's interpolation theorem \cite{Carleson1958} characterizes them by
\(
 \inf_j\prod_{k\ne j}\rho(\lambda_j,\lambda_k)>0,
 \rho(z,w)=\left|\frac{z-w}{1-\overline z w}\right|.
\)
Equivalently, $\Lambda$ is pairwise separated in the pseudohyperbolic
metric $\rho$, and its canonical measure
\(
 \mu_\Lambda=\sum_j(1-|\lambda_j|^2)\delta_{\lambda_j}
\)
is a Carleson measure; see \cite{Garnett2007}. Throughout this paper,
sequences are indexed families, their images retain repeated terms, and
finite sequences of distinct points are included in $\Cseq$.

The same class has a Hilbert space interpretation. Let
\(
 k_\lambda(z)=\frac{1}{1-\overline\lambda z},
 \widehat k_\lambda=(1-|\lambda|^2)^{1/2}k_\lambda
\)
be the Szeg\H{o} kernel of $H^2(\D)$ and its normalization.
The theorem of Shapiro and Shields \cite{ShapiroShields1961} gives
\(
 \Lambda\in\Cseq
 \Longleftrightarrow
 (\widehat k_{\lambda_j})_j
 \text{ is a Riesz sequence in }H^2(\D);
\)
see also \cite{Garnett2007}. This connection extends to normalized
complete Pick spaces: Aleman, Hartz, McCarthy and Richter
\cite{AlemanHartzMcCarthyRichter2019} characterized interpolating
sequences for the multiplier algebra by weak separation and the associated
Carleson measure condition. More recently, Hartz
\cite{Hartz2023} proved the column-row property
with constant one and obtained a new proof of this characterization.
These results place the classical Szeg\H{o} kernel formulation in a
broader setting in interpolation theory.

The characterization by separation and a canonical Carleson measure leads
to two closely related preservation problems: which mappings preserve
Carleson measures under the appropriate transport, and which mappings
preserve Carleson sequences? The former concerns measure
estimates, while the latter also involves the separation of the points
supporting the canonical measure.

For Carleson measures, Semmes \cite{Semmes1988} obtained weighted
pullback estimates for quasiconformal disk homeomorphisms that are
bi-Lipschitz in the Poincar\'e metric and have strongly quasisymmetric
boundary values. Zinsmeister \cite{Zinsmeister1989} related boundedness of
weighted pullback and pushforward operators induced by conformal maps to
$\mathrm{BMOA}$ and boundary geometry. Wei and Zinsmeister
\cite{WeiZinsmeister2018} developed this theory further, including
vanishing Carleson measures and applications to chord-arc domains.
Gonz\'alez and Nicolau \cite[Corollary~1]{GN1998} also characterized
weighted Carleson measure preservation by quasiconformal homeomorphisms of the
upper half plane in terms of strong quasisymmetry of the boundary map. For arbitrary disk bijections, M\"uller
\cite[Theorem~4]{Muller1997} characterized uniform two-sided
Carleson norm comparison between the canonical atomic measures
of a sequence and its image by geometric decomposition conditions
on the map and its inverse.
These results emphasize that the relevant transport generally includes a weight reflecting the change of scale near the boundary.

For Carleson sequences, a complete description is known
within the quasiconformal category. Astala and Zinsmeister
\cite{AZ1991} established preservation when the boundary map is strongly
quasisymmetric, and Gonz\'alez and Nicolau \cite{GN1998} proved the
converse. Their formulation on the upper half plane yields the corresponding
disk statement by conformal equivalence. Strong quasisymmetry means
absolute continuity with an $A_\infty$ density, and strongly
quasisymmetric circle homeomorphisms form a group. Applying the
characterization to the inverse therefore gives preservation in both
directions. This raises the question of what remains true when
quasiconformality is replaced by continuity alone.

We study continuous maps $\Phi:\D\to\D$ satisfying, for every indexed
sequence $\Lambda$, either
\(
 \Lambda\in\Cseq \Longrightarrow \Phi(\Lambda)\in\Cseq\) or
 \(\Lambda\in\Cseq \Longleftrightarrow \Phi(\Lambda)\in\Cseq.
\)
The corresponding classes are denoted by $\Cplus$ and $\Cbi$,
respectively. Neither injectivity nor surjectivity is assumed in their
definitions. Our first aim is to characterize these classes by separating
the metric requirement from the measure requirement, and then to identify
the boundary geometry of bidirectional preservation. A disk homeomorphism
whose forward and inverse maps are uniformly continuous in $\rho$ will be
called a \emph{$\rho$-uniform homeomorphism}.
We use \emph{identity boundary values} to mean that a map, or its
continuous extension to $\overline\D$, restricts to
$\operatorname{id}_{\T}$ on $\T$.

The canonical measures determine the transport appropriate to this
problem. For a continuous map $\Phi:\D\to\D$, set
\(
 q_\Phi(z)=\frac{1-|\Phi(z)|^2}{1-|z|^2},
 T_\Phi\mu=\Phi_*(q_\Phi\mu).
\)
Here $F_*\sigma$ is the ordinary pushforward of a positive Borel measure
$\sigma$ under a Borel map $F:X\to Y$, defined by
$(F_*\sigma)(E)=\sigma(F^{-1}(E))$ for Borel sets $E\subset Y$.
The weight gives the exact identity
\(
 T_\Phi\mu_\Lambda=\mu_{\Phi(\Lambda)},
\)
with multiplicities counted. Theorem~\ref{thm:transport-one-sided}
shows that $\Phi\in\Cplus$ if and only if $\Phi$ is a disk
homeomorphism, $\Phi^{-1}$ is uniformly continuous in $\rho$, and
\(
 \|T_\Phi\mu\|_{\CM}\leq C_\Phi\|\mu\|_{\CM}\) for \(\mu\in\CM\).
The constant is independent of the positive Carleson measure $\mu$.
The proof first obtains uniform estimates for discrete measures with
controlled separation and Carleson constants. Discretization by
interpolating sequences then gives qualitative preservation of all
positive Carleson measures, and a summation argument on the positive cone
supplies the uniform bound. Applying the result to both $\Phi$ and
$\Phi^{-1}$ yields the symmetric weighted transport characterization in
Theorem~\ref{thm:transport-two-sided}. The two classes are distinct:
the smooth map $F(z)=2z/(1+|z|^2)$ has identity boundary values and
belongs to $\Cplus\setminus\Cbi$; see
Example~\ref{ex:one-sided-not-two-sided}. Preservation in one direction need not even provide radial
boundary values: Example~\ref{ex:one-sided-no-radial-limits}
gives a smooth disk diffeomorphism with no radial limit at
any point of $\T$.
On the other hand,
Corollary~\ref{cor:automatic-bidirectional-preservation}
shows that a map in $\Cplus$ belongs to $\Cbi$ exactly when
it is uniformly continuous with respect to $\rho$.

The boundary characterization is
\[
 \Phi\in\Cbi
 \quad\Longleftrightarrow\quad
 \begin{gathered}
 \Phi\text{ is a $\rho$-uniform disk homeomorphism},\\
 h_\Phi\in\operatorname{SQS}^{\pm}(\T),
 \end{gathered}
\]
where $h_\Phi$ is the continuous boundary extension and
$\operatorname{SQS}^{\pm}(\T)$ includes both orientations; see
Theorem~\ref{thm:continuous-bidirectional-boundary}.
A $\rho$-uniform homeomorphism is a quasi-isometry for the hyperbolic
metric. Its boundary map is consequently quasisymmetric, and identity
boundary values force bounded hyperbolic displacement. These facts allow
us to compare a continuous preserver with a quasiconformal extension of
its boundary map and apply the theorem of Gonz\'alez and Nicolau. The classical quasiconformal characterization is recovered in
Corollary~\ref{cor:classical-qc-specialization}.
Example~\ref{ex:non-qc-bidirectional-preserver} shows that our
classification strictly extends the quasiconformal setting,
even among $C^1$ homeomorphisms of the closed disk.

Bidirectional preservation of interpolation does not, however, determine
the scale of the boundary defect $1-|z|^2$, even within the
quasiconformal class; see Example~\ref{ex:power-stretch}. This leads to
the subclass
\(
 \Pc=\{\Phi\in\Cbi:q_\Phi\asymp1\},
 q_\Phi\asymp1
 \Longleftrightarrow
 1-|\Phi(z)|^2\asymp1-|z|^2\) for \(z\in\D\),
where the comparison constants are independent of $z$. From the
kernel viewpoint, the additional condition follows naturally from
\(
 \|k_z\|_{H^2}^2=(1-|z|^2)^{-1}.
\)
Thus $\Cbi$ preserves the Riesz sequence property of normalized
Szeg\H{o} kernels, whereas $\Pc$ also preserves the norms of the
unnormalized kernels up to uniform factors.

The condition on boundary defects is also intrinsic to the preservation of
frames generated by operator orbits. For a diagonal normal operator
$Ae_j=\lambda_j e_j$
with respect to an orthonormal basis $(e_j)_{j\geq1}$, where
$\lambda_j\in\D$, and $f=\sum_j f_je_j$, the characterization of frames generated by a single orbit
\cite[Theorem~5.7]{IterativeNormalOperators}
(see also \cite[Theorem~5.1]{OperatorOrbitFrames}) states that
$(A^nf)_{n\geq0}$ is a frame if and only if $(\lambda_j)\in\Cseq$
and $|f_j|^2\asymp1-|\lambda_j|^2$. Thus preserving the same frame
generators under $A\mapsto\Phi(A)$ requires compatibility of these
weights. Requiring equality of the generator sets for every such diagonal
operator forces the uniform comparison $q_\Phi\asymp1$; see
\cite[Theorem~2.8]{Wu2026}.

This paper characterizes continuous preservers of Carleson
sequences without assuming comparability of boundary defects.
For the subclass $\Pc$ with comparable defects, the bi-Lipschitz boundary
conclusion and canonical factorization follow by specializing the
corresponding results of \cite[Theorems~4.4 and~5.2, Corollary~5.3]{Wu2026} to
continuous maps and combining them with our boundary characterization.
More precisely, Theorem~\ref{thm:weighted-continuous-characterization}
shows that, among $\rho$-uniform disk homeomorphisms, comparability of
boundary defects is equivalent to a bi-Lipschitz boundary map, and either
condition characterizes $\Pc$. Every $\Phi\in\Pc$ has the unique
canonical factorization
\(
 \Phi=\Psi\circ E_{h_\Phi},
 E_h(0)=0, E_h(r\zeta)=rh(\zeta),
\)
where $h_\Phi\in\operatorname{BiLip}(\T)$ and
\(
 \Psi=\Phi\circ E_{h_\Phi^{-1}}\in
 \Kc:=\{F\in\Pc:h_F=\operatorname{id}_{\T}\}.
\)
Theorem~\ref{thm:explicit-continuous-kernel} identifies $\Kc$ with
the $\rho$-uniform disk homeomorphisms with identity boundary values,
and gives equivalent criteria in terms of displacement and pairs of points.
In the continuous setting, this factorization gives the
group decomposition
\(
 \Pc\cong\Kc\rtimes\operatorname{BiLip}(\T);
\)
see Corollary~\ref{cor:continuous-semidirect-product}. This separates the
boundary action from the factor with identity boundary values and provides a
common framework for the differentiable and quasiconformal classes.

For boundary differentiability, let $\Psi$ be a homeomorphism of the
closed disk with identity boundary values, and suppose that, for a continuous
$v_\Psi:\T\to\mathbb C$,
\(
 \Psi((1-t)\zeta)=\zeta-t v_\Psi(\zeta)+o(t)\) for \(t\downarrow0\)
uniformly in $\zeta\in\T$. Theorem~\ref{thm:uniform-boundary-normal}
then gives
\[
 \Psi|_{\D}\in\Kc
 \quad\Longleftrightarrow\quad
 \inf_{\zeta\in\T}
 \operatorname{Re}\bigl(\overline\zeta v_\Psi(\zeta)\bigr)>0.
\]
The expansion determines the real differential relative to the closed
disk at every boundary point. At $\zeta$, its determinant is
$\operatorname{Re}(\overline\zeta v_\Psi(\zeta))$, which is also
the inward normal derivative of the boundary defect
$1-|\Psi(z)|$. No differentiability is required in the interior.
Uniformity is essential: in
Example~\ref{ex:pointwise-boundary-insufficient}, every boundary
differential is the identity, yet the map does not belong to $\Kc$.
If $\Psi\in C^1(\overline\D)$, then
$v_\Psi(\zeta)=D\Psi(\zeta)[\zeta]$, yielding the $C^1$ criterion on the
closed disk in Corollary~\ref{cor:c1-boundary-normal}.
The differential may still degenerate in the interior, as illustrated by
$\Psi(z)=|z|^2z$. Regularity of the canonical kernel must also be
distinguished from regularity of the full map: even for smooth $h$, the
radial extension $E_h$ need not be differentiable at the origin.

For quasiconformal regularity, every quasiconformal disk homeomorphism
$\Psi$ with identity boundary values belongs to $\Kc$.
For such a fixed kernel and a circle homeomorphism $h$ that preserves
orientation, Theorem~\ref{thm:qc-kernel-classification} gives
\[
 \Psi\circ E_h\in\Pc
 \quad\Longleftrightarrow\quad
 h\in\operatorname{BiLip}^{+}(\T)
 \quad\Longleftrightarrow\quad
 \Psi\circ E_h\text{ is quasiconformal}.
\]
We also record the exact maximal dilatation of the radial
factor and the resulting composition bound; see
\cite{Kalaj2014,Ahlfors2006} and
Section~\ref{subsec:qc-kernel}.
The statements for maps that reverse orientation follow by composition with complex
conjugation. In particular, for a quasiconformal or antiquasiconformal
disk homeomorphism, membership in $\Pc$, a bi-Lipschitz boundary map,
and comparability of boundary defects are equivalent; see
Corollary~\ref{cor:all-qc-boundary-defect}.

Section~\ref{sec:continuous-preservers} develops the transport and
boundary characterizations and culminates in
Theorem~\ref{thm:continuous-bidirectional-boundary}.
Section~\ref{sec:regularity} establishes the common factorization
framework, treats uniform boundary expansions of first order in
Section~\ref{subsec:boundary-first-order}, and studies quasiconformal
mappings in Section~\ref{subsec:qc-kernel}.

We conclude with the measure conventions used below. For an arc
$I\subset\T$, let $|I|$ be its normalized arclength, and let $d_\T$
be the corresponding distance measured along the shorter arc, with values in $[0,1/2]$.
For $0<|I|<1$, set
\(
 S(I)=\{r\zeta:\zeta\in I,\ 0<r<1,\ 1-r\leq|I|\}, S(\T)=\D.
\)
We may use closed arcs; the choice of endpoints does not affect the
equivalent norm estimates. For a positive Borel measure $\mu$ on $\D$,
define
\(
 \|\mu\|_{\CM}=
 \sup_{0<|I|\leq1}\frac{\mu(S(I))}{|I|},
 \CM=\{\mu\geq0:\|\mu\|_{\CM}<\infty\}.
\)
The test $I=\T$ controls total mass and includes atoms at the origin.
We write $\delta(z)=1-|z|$ and
$\operatorname{sep}(\Lambda)=\inf_{j\ne k}\rho(\lambda_j,\lambda_k)$.
The notation $A\lesssim B$ means $A\leq CB$ with a constant
independent of the points, arcs, or sequence indices under consideration,
and $A\asymp B$ means that both comparisons hold. Constants may depend
on a fixed mapping.

\section{Continuous preservers of Carleson sequences}
\label{sec:continuous-preservers}

Throughout this section, sequences are indexed families, and their images retain
repetitions. A finite sequence of distinct points is regarded as interpolating.
For a set $E\subset\D$ of distinct points, write
\[
 \mu_E=\sum_{z\in E}(1-|z|^2)\delta_z,
 \qquad \delta(z)=1-|z|.
\]
We use the following form of Carleson's interpolation theorem: $E\in\Cseq$ if
and only if $E$ is pairwise separated in the pseudohyperbolic metric and
$\mu_E\in\CM$; see \cite{Carleson1958,Garnett2007}. Here pairwise separation
means that the distances between distinct points have a common positive lower
bound, as distinguished from the product separation condition.

\subsection{Weighted transport characterizations}
\label{subsec:weighted-transport}
We first record the standard kernel tests for Carleson measures;
see \cite{Garnett2007}. Write
\(
 \mathbb H=\{w\in\mathbb C:\operatorname{Im}w>0\}.
\)
For a bounded interval $J\subset\mathbb R$ of positive Euclidean
length $|J|$, set
\(
 Q(J)=\{x+iy:x\in J,\ 0<y\leq |J|\}.
\)
For a positive Borel measure $\sigma$ on $\mathbb H$, define
\(
 \|\sigma\|_{\CM(\mathbb H)}
 =\sup_J\frac{\sigma(Q(J))}{|J|}.
\)

\begin{lem}
\label{lem:transport-kernel-tests}
For positive locally finite Borel measures $\mu$ on $\D$ and
$\sigma$ on $\mathbb H$, one has
\(
 \|\mu\|_{\CM}
 \asymp
 \sup_{a\in\D}
 \int_\D
 \frac{1-|a|^2}{|1-\overline a z|^2}\,d\mu(z)
\)
and
\(
 \|\sigma\|_{\CM(\mathbb H)}
 \asymp
 \sup_{b\in\mathbb H}
 \int_{\mathbb H}
 \frac{\operatorname{Im}b}{|w-\overline b|^2}\,d\sigma(w).
\)
The implicit constants in both comparisons are absolute
and independent of the measures $\mu$ and $\sigma$.
\end{lem}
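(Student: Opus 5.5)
The plan is to prove the two-sided comparison directly in each setting by comparing the Poisson-type kernel with indicator functions of Carleson boxes. We then tile by dyadic boxes to pass from boxes to the kernel integral. We treat the disk; the half-plane case is identical with $Q(J)$, $\operatorname{Im}b$ and $|w-\overline b|$ in place of $S(I)$, $1-|a|^2$ and $|1-\overline a z|$. Write $P_a(z)=(1-|a|^2)/|1-\overline a z|^2$ and $K(\mu)=\sup_a\int P_a\,d\mu$.

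\emph{Lower bound $\|\mu\|_{\CM}\lesssim K(\mu)$.} Fix an arc $I$ with $0<|I|<1$, centered at $\zeta_0$. Choose $a=(1-c|I|)\zeta_0$ with a small absolute $c$, say $c=1/4$, adjusted so that $|a|$ stays in $[0,1)$. For $z=r\zeta\in S(I)$ we have $|1-\overline a z|\le (1-|a|)+|\zeta_0-\zeta|+(1-r)\lesssim |I|$. Hence $P_a(z)\gtrsim |I|/|I|^2=1/|I|$ on $S(I)$, and so $\mu(S(I))/|I|\lesssim\int P_a\,d\mu$. For $I=\T$ take $a=0$; then $P_0\equiv1$ and $\mu(\D)\le K(\mu)$. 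This handles atoms at the origin as well.

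\emph{Upper bound $K(\mu)\lesssim\|\mu\|_{\CM}$.} Fix $a\ne0$, write $a=|a|\zeta_0$, and let $h=1-|a|$. Let $I_k$ be the arc centered at $\zeta_0$ of length $\min(2^kh,1)$, for $k=0,1,\dots,N$, where $N$ is the first index with $I_N=\T$. Split $\D$ into $A_0=S(I_0)$ and the annular pieces $A_k=S(I_k)\setminus S(I_{k-1})$. On $A_0$ we use $P_a\le 2/(1-|a|)\lesssim 1/h$, which gives a contribution $\lesssim \mu(S(I_0))/h\lesssim\|\mu\|_{\CM}$. On $A_k$ with $k\ge1$, a point $z=r\zeta$ satisfies either $|\zeta-\zeta_0|\gtrsim 2^{k}h$ or $1-r\gtrsim 2^kh$. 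The key elementary estimate is $|1-\overline a z|\gtrsim (1-|a|)+|\zeta-\zeta_0|+(1-r)$, which holds up to absolute constants. It gives $|1-\overline a z|\gtrsim 2^kh$ on $A_k$, hence $P_a\lesssim h/(2^kh)^2$. The contribution of $A_k$ is therefore $\lesssim 2^{-2k}h^{-1}\mu(S(I_k))\lesssim 2^{-k}\|\mu\|_{\CM}$. Summing the geometric series gives $\int P_a\,d\mu\lesssim\|\mu\|_{\CM}$ uniformly in $a$. The case $a=0$ follows from the test $I=\T$.

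The main obstacle is the elementary lower bound for $|1-\overline a z|$ used above, with absolute constants. It needs some care near the regime where $|I_k|$ saturates at $1$ and where $a$ is near $0$. We prove it by writing $1-\overline a z=(1-|a|r)+|a|r(1-\overline{\zeta_0}\zeta)$. The real part of the first term dominates $1-|a|r\ge\max(h,1-r)$. The imaginary part is comparable to $|a|r|\zeta-\zeta_0|$ when the angular difference is at most $\pi/2$, and otherwise the real part of $1-\overline a z$ is at least $1$. When $|a|\le1/2$, every quantity is comparable to $1$. In the half-plane the corresponding identity is $|w-\overline b|^2=(x-\operatorname{Re}b)^2+(y+\operatorname{Im}b)^2$, which makes the bound immediate. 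Local finiteness is only needed to make the integrals meaningful; all the estimates are monotone in $\mu$, so the same constants work for every measure.
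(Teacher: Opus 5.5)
Your proof is correct. The paper gives no proof of this lemma and simply cites Garnett, and your argument (testing with $a=(1-|I|/4)\zeta_0$ for the lower bound, and summing over the dyadic annuli $S(I_k)\setminus S(I_{k-1})$ for the upper bound) is the standard proof of that cited fact.

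One small imprecision: in the lower bound for $|1-\overline a z|$, the degenerate case is $|a|r\le 1/2$ rather than $|a|\le 1/2$. That case is still harmless, because $\operatorname{Re}(1-\overline a z)\ge 1-|a|r\ge 1/2$ there.
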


For $U=\D$ or $\mathbb H$ and a nonempty sequence
$\Lambda=(\lambda_j)\subset U$, let
$\mathfrak I_U(\Lambda)$ denote its $H^\infty(U)$ interpolation
constant. This is the infimum of all $B\geq1$ such that every
bounded sequence $(a_j)$ admits a function $f\in H^\infty(U)$
satisfying
\(
 f(\lambda_j)=a_j\) for all \(j\),
 \(\|f\|_{H^\infty(U)}\leq B\|(a_j)\|_{\ell^\infty}.
\)
We set $\mathfrak I_U(\Lambda)=+\infty$ if no such $B$ exists.

\begin{lem}
\label{lem:transport-interpolation-bounds}
There is an absolute constant $C$ such that, for every nonempty
sequence $\Lambda=(\lambda_j)$ of distinct points in $\D$,
\(
 \mathfrak I_\D(\Lambda)\leq L<\infty
 \Longrightarrow
 \operatorname{sep}(\Lambda)\geq L^{-1},
 \|\mu_\Lambda\|_{\CM}\leq CL^2.
\)
The separation assertion is vacuous for a singleton.
\end{lem}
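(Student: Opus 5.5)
The plan is to derive both conclusions from the standard Blaschke-product argument, with the Carleson bound coming from Lemma~\ref{lem:transport-kernel-tests} and an interpolating function chosen so as to expose the Carleson embedding.

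\emph{Separation.} Fix $j\ne k$ and interpolate the values $a_j=1$, $a_i=0$ for $i\ne j$. This gives $f\in H^\infty(\D)$ with $\|f\|_\infty\le L'$ for every $L'>L$, $f(\lambda_j)=1$ and $f(\lambda_k)=0$. Then $f/L'$ maps $\D$ into $\overline{\D}$. The Schwarz--Pick lemma gives $\rho(f(\lambda_j)/L',f(\lambda_k)/L')\le\rho(\lambda_j,\lambda_k)$. The left side equals $\rho(1/L',0)=1/L'$, so $\rho(\lambda_j,\lambda_k)\ge 1/L'$. Letting $L'\downarrow L$ yields $\operatorname{sep}(\Lambda)\ge L^{-1}$.

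\emph{Carleson bound.} We use the classical duality argument giving $\|\mu_\Lambda\|_{\CM}\lesssim L^2$, i.e.\ the Shapiro--Shields direction. Take any finite subfamily and any signs or unimodular $a_j$. Interpolate to get $f_a$ with $\|f_a\|_\infty\le L'$ and $f_a(\lambda_j)=a_j$. Fix $g\in H^2$ and expand $\sum_j c_j(1-|\lambda_j|^2)^{1/2}g(\lambda_j)$, where $c_j$ is an arbitrary finite sequence.

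The cleaner route is the Riesz-sequence bound. We show that the normalized kernels $\widehat k_{\lambda_j}$ satisfy the upper frame bound
\[
\Bigl\|\sum c_j\widehat k_{\lambda_j}\Bigr\|^2\le L^2\sum|c_j|^2 .
\]
This is standard: let $T_a$ be the multiplication operator by $f_a$. Its adjoint satisfies $T_a^*k_\lambda=\overline{f_a(\lambda)}k_\lambda$. Averaging $\|T_a^*\sum c_j\widehat k_{\lambda_j}\|^2$ over independent random unimodular $a_j$ kills the cross terms and gives the lower Riesz bound $\sum|c_j|^2\le L^2\|\sum c_j \widehat k_{\lambda_j}\|^2$. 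That is the wrong direction, so I would instead use the interpolating functions to build a bounded left inverse and then dualize.

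A more direct approach for the upper bound is via Lemma~\ref{lem:transport-kernel-tests}. For $a\in\D$,
\[
\int\frac{1-|a|^2}{|1-\overline a z|^2}\,d\mu_\Lambda=\sum_j|\langle \widehat k_a,\widehat k_{\lambda_j}\rangle|^2 .
\]
It therefore suffices to show the Bessel inequality $\sum_j|\langle g,\widehat k_{\lambda_j}\rangle|^2\le CL^2\|g\|^2$ with $g=\widehat k_a$.

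The main obstacle is obtaining this Bessel bound purely from the interpolation constant. I would first try the classical route through Carleson's theorem with constants. Interpolating $\delta_{jk}$ shows $\prod_{k\ne j}\rho(\lambda_j,\lambda_k)\ge 1/L$: the function $f/L'$ vanishes at every $\lambda_k$ with $k\ne j$, so it is divisible by the Blaschke product $B_j$ with norm at most one, whence $1\le L'|B_j(\lambda_j)|$. One then invokes the quantitative estimate $\|\mu_\Lambda\|_{\CM}\le C(1+\log(1/\delta))$ with $\delta=\inf_j|B_j(\lambda_j)|$; see \cite{Garnett2007}. Since $1+\log L\le L^2$ for $L\ge1$, this gives $\|\mu_\Lambda\|_{\CM}\le CL^2$ with an absolute constant. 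If one prefers a self-contained argument, the $L^2$ growth also follows from the averaging trick just described applied to the Gram matrix, which yields a Bessel constant of order $L^2$ directly.

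Finally, the case of a singleton is trivial: $\mu_\Lambda(S(I))\le 1-|\lambda|^2\lesssim|I|$ whenever $\lambda\in S(I)$, by definition of $S(I)$ and the test $I=\T$. Infinite sequences reduce to finite subfamilies by monotone convergence, since $\mathfrak I_\D$ does not increase on passing to subfamilies.
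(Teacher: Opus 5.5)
Your separation argument is the same as the paper's. For the Carleson bound, the argument you actually complete takes a different route, and it is correct. You interpolate $\delta_{jk}$ and divide by the Blaschke product $B_j$, which gives the uniform Carleson condition $\inf_j\prod_{k\ne j}\rho(\lambda_j,\lambda_k)\ge 1/L$. You then invoke the classical quantitative estimate $\|\mu_\Lambda\|_{\CM}\le C(1+\log(1/\delta))$ from the proof of Carleson's theorem. That estimate comes from $\sum_k\bigl(1-\rho(\lambda_j,\lambda_k)^2\bigr)\le 1+2\log(1/\delta)$, applied at a point in the top half of each box, followed by a maximal-box stopping argument. You finish with $1+\log L\le L^2$ for $L\ge1$. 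This route gives the sharper bound $C(1+\log L)$, but it hands the real work to that external geometric lemma. The paper instead stays in $H^2$ and needs only Lemma~\ref{lem:transport-kernel-tests}. It proves the Bessel bound $\bigl\|\sum_{j\in F}c_j\widehat k_{\lambda_j}\bigr\|^2\le (L')^2\sum_{j\in F}|c_j|^2$, dualizes to $\sum_j(1-|\lambda_j|^2)|h(\lambda_j)|^2\le (L')^2\|h\|_{H^2}^2$, and tests with $h=\widehat k_a$.

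Your abandoned first attempt is exactly the paper's argument, with the averaging done on the wrong side. Fix signs $(a_j)$ and let $T_a$ be multiplication by $f_a$. The same operator bound gives
$\bigl\|\sum c_j\widehat k_{\lambda_j}\bigr\|=\bigl\|T_a^*\bigl(\sum a_jc_j\widehat k_{\lambda_j}\bigr)\bigr\|\le L'\bigl\|\sum a_jc_j\widehat k_{\lambda_j}\bigr\|$.
Averaging the right-hand side over independent signs gives the upper bound $(L')^2\sum|c_j|^2$ directly, so no left inverse is needed. As written, though, your closing remark that the averaging trick ``yields a Bessel constant of order $L^2$ directly'' is not supported by the computation you gave, which produced only the lower Riesz bound. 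The ``bounded left inverse'' idea is also left unspecified. So the Carleson estimate in your proposal rests on the cited Blaschke--Garnett route.
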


\begin{proof}
Fix $R>L$. For distinct indices $j$ and $k$, choose
$f\in H^\infty(\D)$ with
\(
 f(\lambda_j)=1, f(\lambda_k)=0,
  \|f\|_\infty\leq R.
\)
Applying the Schwarz--Pick lemma to $f/R$ gives
\(
 R^{-1}\leq \rho(\lambda_j,\lambda_k).
\)
Since $R>L$ is arbitrary, this proves the separation estimate.

To prove the measure estimate, put
\(
 e_j(z)=\frac{(1-|\lambda_j|^2)^{1/2}}
                   {1-\overline{\lambda_j}z}.
\)
These are the normalized reproducing kernels of \(H^2(\D)\).
Let \(F\) be a finite set of indices and let \((c_j)_{j\in F}\)
be arbitrary complex numbers. For each choice of signs
\(\varepsilon=(\varepsilon_j)_{j\in F}\in\{-1,1\}^F\),
choose \(f_\varepsilon\in H^\infty(\D)\) such that
\(
 f_\varepsilon(\lambda_j)=\varepsilon_j
 \quad(j\in F),
 f_\varepsilon(\lambda_j)=0
 \quad(j\notin F),
 \|f_\varepsilon\|_\infty\leq R.
\)
Let $M_{f_\varepsilon}$ denote multiplication by
$f_\varepsilon$ on $H^2(\D)$. Since
$\|M_{f_\varepsilon}\|\leq R$ and
\(
 M_{f_\varepsilon}^*e_j
 =\overline{f_\varepsilon(\lambda_j)}e_j,
\)
we obtain
\(
 \left\|\sum_{j\in F}c_je_j\right\|_{H^2}^2
 \leq
 R^2\left\|\sum_{j\in F}\varepsilon_jc_je_j
          \right\|_{H^2}^2.
\)
Averaging over all sign choices, and using $\|e_j\|_{H^2}=1$,
gives
\(
 \left\|\sum_{j\in F}c_je_j\right\|_{H^2}^2
 \leq R^2\sum_{j\in F}|c_j|^2.
\)
By duality, and since $F$ is arbitrary,
\(
 \sum_j(1-|\lambda_j|^2)|h(\lambda_j)|^2
 \leq R^2\|h\|_{H^2}^2\) for \(h\in H^2(\D)\).
Taking
\(
 h(z)=\frac{(1-|a|^2)^{1/2}}{1-\overline a z}\) for \(a\in\D,\)
whose $H^2$ norm is one, yields
\[
 \sup_{a\in\D}
 \int_\D
 \frac{1-|a|^2}{|1-\overline a z|^2}\,d\mu_\Lambda(z)
 \leq R^2.
\]
By Lemma~\ref{lem:transport-kernel-tests},
$\|\mu_\Lambda\|_{\CM}\leq CR^2$.
Letting $R\downarrow L$ proves the assertion.
\end{proof}

\begin{lem}
\label{lem:transport-finite-perturbation}
An interpolating sequence has only finitely many terms in each compact subdisk
of $\D$. Adding or deleting finitely many points does not change its
interpolating property, provided that no repetitions are introduced.
\end{lem}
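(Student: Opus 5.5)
The plan is to derive both assertions of Lemma~\ref{lem:transport-finite-perturbation} from the separation-plus-Carleson-measure form of Carleson's theorem recalled at the start of this section.

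\emph{Finiteness in compact subdisks.} Let $\Lambda\in\Cseq$ and fix $r<1$. Put $\sigma=\operatorname{sep}(\Lambda)>0$, which is positive by Lemma~\ref{lem:transport-interpolation-bounds}. The pseudohyperbolic discs $\{z:\rho(z,\lambda_j)<\sigma/2\}$ are pairwise disjoint. I will use hyperbolic area, which is $\rho$-invariant. For $|\lambda_j|\le r$, each such disc has the same hyperbolic area and lies in the fixed compact set $\{z:\rho(z,\overline{D(0,r)})\le\sigma/2\}$, whose hyperbolic area is finite. Hence only finitely many indices satisfy $|\lambda_j|\le r$. A second argument is also available: $\mu_\Lambda(\D)\le\|\mu_\Lambda\|_{\CM}<\infty$ by the test $I=\T$, and each point with $|\lambda_j|\le r$ contributes at least $1-r^2$ to this mass. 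This bounds their number directly, and I will present this shorter argument.

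\emph{Deletion.} Removing points preserves separation. It also decreases the canonical measure, so the Carleson norm can only drop. By Carleson's theorem, the resulting sequence is still in $\Cseq$. This direction is immediate; alternatively, one can simply restrict the interpolating functions.

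\emph{Addition.} Let $\Lambda'=\Lambda\cup\{w_1,\dots,w_m\}$, where the $w_i$ are distinct and different from all $\lambda_j$. The measure part is easy, since $\|\mu_{\Lambda'}\|_{\CM}\le\|\mu_\Lambda\|_{\CM}+\sum_i\|(1-|w_i|^2)\delta_{w_i}\|_{\CM}$. Each point mass is Carleson: $S(I)$ contains $w_i$ only if $|I|\ge1-|w_i|$, so the ratio is at most $2$. Separation is the only real point, and I expect it to be the main (mild) obstacle. For each fixed $i$ we need $\inf_j\rho(w_i,\lambda_j)>0$. Only finitely many $\lambda_j$ lie in the compact set $\{z:\rho(z,w_i)\le1/2\}$, by the first part. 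Each of these has positive distance from $w_i$ because no repetitions occur. All remaining $\lambda_j$ are at distance greater than $1/2$. Taking the minimum over the finitely many $i$, and over the finitely many pairs $w_i,w_k$, gives a positive separation constant for $\Lambda'$. Carleson's theorem then yields $\Lambda'\in\Cseq$.

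Finally, the hypothesis excluding repetitions is exactly what keeps the separation positive, and the statement about finite sequences of distinct points is consistent with the convention that such sequences lie in $\Cseq$.
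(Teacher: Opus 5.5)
Your proposal is correct and takes the same route as the paper, which only remarks that both properties follow from the characterization of $\Cseq$ by pairwise separation together with $\mu_\Lambda\in\CM$. Your arguments supply the details the paper omits: the mass bound $\#\{j:|\lambda_j|\le r\}\le\|\mu_\Lambda\|_{\CM}/(1-r^2)$, monotonicity under deletion, and the point-mass and local-finiteness separation estimates under addition (where the final minimum should also include $\operatorname{sep}(\Lambda)$ itself).
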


These standard properties follow from the characterization by separation and
the Carleson measure condition
above; see also \cite{ShapiroShields1961}.

\begin{samepage}
\begin{lem}
\label{lem:transport-sparse-packets}
Let $F_n\subset\D$ be nonempty finite sets with $\#F_n\leq2$. Suppose that
distinct points in each $F_n$ have pseudohyperbolic distance at least
$\varepsilon>0$. Set
\(
 b_n=\min_{z\in F_n}\delta(z),
 a_n=\max_{z\in F_n}\delta(z).
\)
If $a_{n+1}\leq\theta b_n$ for all $n$, where $0<\theta<1$, then
$\bigcup_nF_n\in\Cseq$.
\end{lem}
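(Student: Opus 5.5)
The plan is to check the two conditions in the form of Carleson's theorem recalled at the start of this section: pairwise pseudohyperbolic separation of $E=\bigcup_nF_n$, and $\mu_E\in\CM$. We also need the points of $E$ to be distinct. That follows from the scale condition: for $m>n$, iterating $a_{k+1}\leq\theta b_k\leq\theta a_k$ gives
\[
 a_m\leq\theta^{m-n}b_n<b_n
 \quad(m>n),
\]
so every point of $F_m$ is strictly closer to $\T$ than every point of $F_n$. In particular the packets are disjoint and each packet lies in its own annulus.

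\textbf{Separation.} Points in the same packet are at pseudohyperbolic distance at least $\varepsilon$ by hypothesis. For $z\in F_n$ and $w\in F_m$ with $m>n$, we have $\delta(w)\leq a_m\leq\theta b_n\leq\theta\delta(z)$. We then use the elementary bound
\[
 \rho(z,w)\geq\frac{|z|-|w|}{1-|z||w|}
\]
(the pseudohyperbolic distance dominates that of the moduli). Writing $s=\delta(z)$ and $t=\delta(w)\leq\theta s$, the right side equals $(s-t)/(s+t-st)\geq(s-t)/(s+t)\geq(1-\theta)/(1+\theta)$. Hence
\[
 \operatorname{sep}(E)\geq\min\Bigl(\varepsilon,\frac{1-\theta}{1+\theta}\Bigr).
\]

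\textbf{Carleson condition.} Fix an arc $I$. For $I=\T$, the total mass is at most $\sum_n 2\cdot2a_n$. Since $a_n\leq\theta^{n-1}a_1\leq\theta^{n-1}$, this is bounded by $4/(1-\theta)$. For $0<|I|<1$, only points with $\delta(z)\leq|I|$ lie in $S(I)$. Let $N$ be the smallest index such that $F_N$ meets $S(I)$. Then $b_N\leq\delta(z)\leq|I|$ for some $z\in F_N$, and for $n\geq N$ we get $a_n\leq\theta^{n-N}a_N$. This bounds $a_N$ only by $b_{N-1}$, which is not by itself controlled by $|I|$. The fix is to handle packet $N$ separately. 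It contributes at most $2\cdot2\delta\leq4|I|$, since only points of $S(I)$ count and they satisfy $\delta\leq|I|$. For $n\geq N+1$ we have $a_n\leq\theta^{n-N-1}a_{N+1}\leq\theta^{n-N}b_N\leq\theta^{n-N}|I|$. Summing the geometric series, the later packets contribute at most $4|I|\theta/(1-\theta)$. Therefore $\mu_E(S(I))\leq C_\theta|I|$, so $\mu_E\in\CM$ and Carleson's theorem gives $E\in\Cseq$.

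The only delicate point is this Carleson estimate, where the first packet meeting $S(I)$ has to be treated on its own. The fact that each packet has at most two points is what keeps the per-packet mass at most $4a_n$. The rest is elementary estimation of $\rho$ by the difference of radii.
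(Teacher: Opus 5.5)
Your proof is correct and follows essentially the same route as the paper. Separation between packets comes from the radial bound $\rho(z,w)\geq\bigl||z|-|w|\bigr|/(1-|z||w|)$ with $\delta(w)\leq\theta\delta(z)$. The Carleson estimate isolates the first packet meeting $S(I)$ and controls the later packets by $a_{N+k}\leq\theta^k b_N\leq\theta^k|I|$, which yields the same bound $4|I|/(1-\theta)$. Your choice of the first index and your separate treatment of $I=\T$ are only cosmetic departures from the paper.

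There is one small slip in the separation step. Since $w$ is the point closer to $\T$, the numerator must be $\bigl||z|-|w|\bigr|=|w|-|z|=s-t$, not $|z|-|w|$.
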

\end{samepage}

\begin{proof}
Set $E=\bigcup_n F_n$. We first verify separation.
If $z\in F_n$ and $w\in F_m$ with $m>n$, then
\(
 \delta(w)\leq a_m\leq\theta^{m-n}b_n
 \leq\theta\delta(z).
\)
Writing $x=\delta(z)$ and $y=\delta(w)$, we obtain
\[
 \rho(z,w)\geq
 \frac{\bigl||z|-|w|\bigr|}{1-|z||w|}
 =\frac{x-y}{x+y-xy}
 \geq\frac{x-y}{x+y}
 \geq\frac{1-\theta}{1+\theta}.
\]
Thus different packets are disjoint, and the separation assumption within each
packet implies that $E$ is pairwise separated.

We next show that $\mu_E$ is a Carleson measure.
Fix an arc $I\subset\T$ and put $t=|I|$.
If no index $n$ satisfies $b_n\leq t$, then
$E\cap S(I)=\varnothing$.
Otherwise, let $n_0$ be the first such index.
Packets with $n<n_0$ do not meet $S(I)$.
Since $\#F_{n_0}\leq2$ and
$1-|z|^2\leq2\delta(z)$, we have
\[
 \mu_{F_{n_0}}(S(I))
 \leq 2\sum_{z\in F_{n_0}\cap S(I)}\delta(z)
 \leq4t.
\]
For every $k\geq1$, the hypothesis gives
\(
 a_{n_0+k}\leq\theta^k b_{n_0}\leq\theta^k t.
\)
Consequently,
\[
 \mu_E(S(I))
 \leq \mu_{F_{n_0}}(S(I))
       +\sum_{k=1}^{\infty}\mu_{F_{n_0+k}}(\D)
 \leq 4t+4\sum_{k=1}^{\infty}a_{n_0+k}
 \leq 4t\sum_{k=0}^{\infty}\theta^k
 =\frac{4t}{1-\theta}.
\]
Hence $\mu_E\in\CM$, and Carleson's interpolation theorem
yields $E\in\Cseq$.
For singleton packets, separation follows entirely from
the estimate between different packets.
\end{proof}

\begin{samepage}
\begin{prop}
\label{prop:transport-injective-proper}
Let $\Phi:\D\to\D$ be an arbitrary map such that
\(
 \Lambda\in\Cseq\Longrightarrow\Phi(\Lambda)\in\Cseq\) for every indexed sequence \(\Lambda\subset\D
\).
Then $\Phi$ is injective, and
\(
 |z_n|\rightarrow1
 \Longrightarrow
 |\Phi(z_n)|\rightarrow1.
\)
\end{prop}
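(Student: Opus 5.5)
Injectivity is the easy half and uses only two-point sequences. If $z\neq w$ and $\Phi(z)=\Phi(w)$, then the finite sequence $(z,w)$ of distinct points lies in $\Cseq$ by the convention of this section. Its image $(\Phi(z),\Phi(w))$ keeps the repeated term, however, and a sequence with a repetition cannot be interpolating: no bounded function takes the values $1$ and $0$ at the same point. Equivalently, separation fails, as in Lemma~\ref{lem:transport-interpolation-bounds}. Hence $\Phi$ is injective.

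For the boundary behaviour I argue by contradiction. Suppose $|z_n|\to1$ while $|\Phi(z_n)|\not\to1$. Passing to a subsequence, we may assume $\Phi(z_n)$ stays in a compact disk $\{|w|\leq r\}$ with $r<1$. We may also assume $\delta(z_{n+1})\leq\frac12\delta(z_n)$ for all $n$, which is possible because $\delta(z_n)\to0$.

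Now apply Lemma~\ref{lem:transport-sparse-packets} with singleton packets $F_n=\{z_n\}$ and $\theta=1/2$. Its hypothesis $a_{n+1}\leq\theta b_n$ is exactly the decay condition we arranged, so $\Lambda=(z_n)$ is a Carleson sequence. The points $z_n$ are distinct because their moduli strictly increase.

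By hypothesis $\Phi(\Lambda)\in\Cseq$. Since $\Phi$ is injective, the image terms $\Phi(z_n)$ are also distinct. But infinitely many of them lie in the compact subdisk $\{|w|\leq r\}$, and this contradicts Lemma~\ref{lem:transport-finite-perturbation}. Therefore $|\Phi(z_n)|\to1$.

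I expect no real obstacle here; the argument works for an arbitrary map and needs no continuity. The only point needing care is that injectivity must be established first. Without it, the image sequence could repeat values, and we would want to conclude that it is non-interpolating because of the repetition rather than through the compactness argument. Either route gives a contradiction, but establishing injectivity first makes the compactness argument clean.
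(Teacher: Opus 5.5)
Your proof is correct and follows the paper's argument essentially verbatim. Injectivity comes from two-point sequences. For the boundary behaviour, you pass to a lacunary subsequence that is interpolating by Lemma~\ref{lem:transport-sparse-packets}; you use $\theta=1/2$ where the paper uses decay factor $1/4$, which is immaterial. Its injective image then has infinitely many distinct terms in a compact subdisk, contradicting Lemma~\ref{lem:transport-finite-perturbation}.
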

\end{samepage}

\begin{proof}
Every sequence consisting of two distinct points is interpolating, so its image
cannot contain a repetition. Thus $\Phi$ is injective.

If the asserted boundary behavior failed, there would exist
$R\in(0,1)$ and a sequence $(z_n)\subset\D$ such that
$|z_n|\to1$ and $|\Phi(z_n)|\leq R$ for every $n$. Passing to a subsequence, arrange that
$\delta(z_{n+1})\leq\delta(z_n)/4$. Lemma~\ref{lem:transport-sparse-packets}
shows that this subsequence is interpolating, whereas its image contains
infinitely many distinct points in a compact subdisk, contrary to
Lemma~\ref{lem:transport-finite-perturbation}.
\end{proof}

The next proposition supplies the uniform discrete estimates
needed to pass from sequence preservation to bounded transport
of Carleson measures.

\begin{samepage}
\begin{prop}
\label{prop:transport-uniformization}
Suppose that $\Phi:\D\to\D$ maps every interpolating sequence to an
interpolating sequence. For every $\eta>0$ and $M<\infty$, there is a constant
$B(\Phi,\eta,M)<\infty$ such that
\[
 \operatorname{sep}(\Lambda)\geq\eta,
 \qquad \|\mu_\Lambda\|_{\CM}\leq M
 \quad\Longrightarrow\quad
 \|\mu_{\Phi(\Lambda)}\|_{\CM}\leq B(\Phi,\eta,M).
\]
\end{prop}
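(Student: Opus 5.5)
Argue by contradiction, gluing a sequence of bad examples into a single interpolating sequence whose image fails to be interpolating. Suppose no such constant exists. Then for each $n$ there is $\Lambda_n$ with $\operatorname{sep}(\Lambda_n)\geq\eta$, $\|\mu_{\Lambda_n}\|_{\CM}\leq M$ and $\|\mu_{\Phi(\Lambda_n)}\|_{\CM}>4^n n$, or $\Phi(\Lambda_n)\notin\Cseq$; the latter is impossible by hypothesis, since $\Lambda_n\in\Cseq$ by Carleson's theorem. Since the Carleson norm of a measure is the supremum over arcs $I$ of $\mu(S(I))/|I|$, and each $S(I)$ can be approximated by finitely many atoms, I would first reduce to \emph{finite} sets $F_n\subset\Lambda_n$ with $\|\mu_{\Phi(F_n)}\|_{\CM}>4^n n$. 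Passing to subsets preserves the separation and Carleson bounds for $F_n$.

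The key step is to make the blocks far apart in scale so that their union stays interpolating. Here I would use the disk automorphisms. Composing with rotations and M\"obius maps does not change separation, and it changes Carleson norms only by absolute factors, provided one uses the conformally invariant kernel test of Lemma~\ref{lem:transport-kernel-tests}. The image side, however, involves $\Phi$, which is not equivariant. So instead of moving $F_n$, I would use the observation that finite modifications do not matter (Lemma~\ref{lem:transport-finite-perturbation}) and choose blocks sequentially. Given the already chosen finite union $G=F_{n_1}\cup\dots\cup F_{n_k}$, Proposition~\ref{prop:transport-injective-proper} shows that $\Phi(G)$ is a finite set in some compact disk $|w|\leq r_k$. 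I then need the next bad block to live near the boundary, both in the source (with $\delta$ much smaller than $\min_G\delta$) and in the image. To arrange this, I would split each bad block $F_n$: removing the finitely many points in a fixed compact disk $|z|\leq\rho_k$ costs only a bounded amount of Carleson norm in the image, at most $C(k)$. This bound is independent of $n$, because those source points number at most $N(\rho_k,\eta)$ by separation, and their images have $1-|\Phi|^2\leq 1$. Since $4^n n\to\infty$, the truncated block is still bad for large $n$.

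Having chosen blocks $F'_k$ with source scales decreasing geometrically and each $F'_k$ separated with $\|\mu_{F'_k}\|_{\CM}\leq M$, I would show that $E=\bigcup_k F'_k$ is in $\Cseq$. Separation between blocks follows as in the proof of Lemma~\ref{lem:transport-sparse-packets}, because $\delta$ values of different blocks differ by a factor $\theta$. For the Carleson estimate on an arc $I$, I would sum $\mu_{F'_k}(S(I))$. Let $k_0$ be the first block that meets $S(I)$. Each such block contributes at most $M|I|$. Later blocks $F'_k$, $k>k_0$, have total mass at most $M\cdot(\text{scale})$, where the scale is $\lesssim\theta^{k-k_0}|I|$, using $\mu(\D)\lesssim M\cdot\max\delta$ for a block lying in a thin annulus. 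Such an annulus condition can be enforced by the truncation. So the sum is geometric. Then $\Phi(E)\supset\Phi(F'_k)$ has $\|\mu_{\Phi(E)}\|_{\CM}\geq\|\mu_{\Phi(F'_k)}\|_{\CM}\to\infty$, so $\Phi(E)\notin\Cseq$, a contradiction.

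The main obstacle is the bound on the total mass of later blocks. A block with Carleson norm $\leq M$ can have total mass up to $M$, not small, unless all its points lie at $\delta\leq s$. In that case the mass is only $\lesssim M$ times the number of Whitney arcs, which is still not small. I therefore expect to refine the reduction: truncate each bad block to a subset $F$ with points in an annulus $s\leq\delta\leq cs$ inside a single box $S(J)$ on which the image norm is large. For this, I would first localize badness to one arc $J$ of the image, then pull back. Then $\mu_F(\D)\leq M|J|$ can be made geometrically small, since we may pick $n$ so that $|J|$ is small, using rotations of only the source.
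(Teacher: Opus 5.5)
\textbf{Genuine gap: the localization step is not supplied.} Your overall architecture matches the paper's proof. You argue by contradiction, glue finite bad blocks at geometrically separated scales, and bound $\mu_E(S(I))$ by $M|I|$ for the first block meeting $S(I)$ plus a geometric tail. However, the step you yourself call the main obstacle is not resolved by a valid argument, and it is the crux.

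The tail needs $\mu_{F_n}(\D)\le Ms_n$ with scales $s_n$ fixed in advance. This holds when $F_n$ lies in a single \emph{source} box $S(J_n)$ with $|J_n|=s_n$. Your intermediate claim that a block in a thin annulus has mass $\lesssim M\max\delta$ is false, as you then notice: roughly $s^{-1}$ equally spaced points with $\delta=s$ are uniformly separated, have Carleson norm $O(1)$, and have total mass of order one.

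Your proposed remedy does not produce a block in a single source box. Localizing the badness to an arc $I$ of the \emph{image} and pulling back gives points of $\Phi^{-1}(S(I))$, which need not lie in any small source box. Rotating the source does not help either, since $\Phi$ is not equivariant and a rotated copy of $\Lambda_n$ has unrelated images. The annulus restriction $s\le\delta\le cs$ cannot be obtained by pigeonholing with controlled loss, because there are infinitely many dyadic scales below $s$. It is also not needed.

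\textbf{The missing idea.} Pigeonhole over source arcs, choosing the amount of badness \emph{after} the scale.
Fix $0<s<1/2$ and $L$, and cover $\T$ by $N_s\le 2/s$ arcs $J_j$ of length $s$. By unboundedness, pick $\Lambda$ and $I$ with $\mu_{\Phi(\Lambda)}(S(I))>(N_sL+2M/s)|I|$.
Since $\mu_\Lambda(\D)\le M$, at most $M/s$ points have $\delta>s$. Each of them with $\Phi(z)\in S(I)$ contributes $1-|\Phi(z)|^2\le 2|I|$. This bound, not $1-|\Phi|^2\le 1$, is what makes your truncation cost independent of $|I|$.
All remaining points lie in $\bigcup_j S(J_j)$. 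So some single $S(J_j)$ carries more than $L|I|$ of image mass in $S(I)$. A finite subset $F$ of $\Lambda\cap S(J_j)$ does as well, while $\mu_F(\D)\le M|J_j|=Ms$.
Now choose inductively $s_{n+1}<\theta\min\{s_n,b_n\}$, where $b_n=\min_{F_n}\delta>0$ by finiteness. This gives both the radial separation between blocks and the geometric tail, so your compact-disk truncation and annulus condition become unnecessary.
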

\end{samepage}

\begin{proof}
By Proposition~\ref{prop:transport-injective-proper}, $\Phi$ is injective.
Suppose that the assertion fails for fixed $\eta$ and $M$.

We first claim that, for every $0<s<1/2$ and $L>0$, there exist
a sequence $\Lambda$ satisfying
\(
 \operatorname{sep}(\Lambda)\geq\eta, \|\mu_\Lambda\|_{\CM}\leq M,
\)
arcs $I,J\subset\T$, and a nonempty finite set
\[
 F\subset\Lambda\cap S(J)\cap\Phi^{-1}(S(I)),
 \qquad |J|=s,
\]
such that
\(
 \mu_{\Phi(F)}(S(I))>L|I|.
\)

To prove the claim, fix $0<s<1/2$ and $L>0$.
Choose arcs $J_1,\ldots,J_{N_s}$ of length $s$ covering $\T$,
where $N_s\leq2/s$.
By the assumed failure of the uniform bound, there exist
a sequence $\Lambda$ satisfying the separation and Carleson bounds
above and an arc $I\subset\T$ such that
\[
 \mu_{\Phi(\Lambda)}(S(I))
 >
 \left(N_sL+\frac{2M}{s}\right)|I|.
\]
Let
\(
 H=\{z\in\Lambda:\delta(z)>s\}.
\)
Since $1-|z|^2\geq\delta(z)>s$ for $z\in H$ and
$\mu_\Lambda(\D)\leq M$, the set $H$ has at most $M/s$ points.
Moreover, if $\Phi(z)\in S(I)$, then
\(
 1-|\Phi(z)|^2
 \leq2\bigl(1-|\Phi(z)|\bigr)
 \leq2|I|.
\)
Consequently,
\[
 \sum_{\substack{z\in H\\ \Phi(z)\in S(I)}}
       (1-|\Phi(z)|^2)
 \leq\frac{2M}{s}|I|.
\]
Thus the remaining set
\(
 E=\{z\in\Lambda:\delta(z)\leq s,\ \Phi(z)\in S(I)\}
\)
satisfies
\(
 \sum_{z\in E}(1-|\Phi(z)|^2)>N_sL|I|.
\)

Every point of $E$ is nonzero and belongs to at least one of the
boxes $S(J_1),\ldots,S(J_{N_s})$. Hence
\[
 \sum_{j=1}^{N_s}
 \sum_{z\in E\cap S(J_j)}(1-|\Phi(z)|^2)
 \geq
 \sum_{z\in E}(1-|\Phi(z)|^2)
 >N_sL|I|.
\]
It follows that, for some $j$,
\(
 \sum_{z\in E\cap S(J_j)}(1-|\Phi(z)|^2)>L|I|.
\)
Since all summands are positive, there is a nonempty finite set
$F\subset E\cap S(J_j)$ such that
\(
 \mu_{\Phi(F)}(S(I))
 =\sum_{z\in F}(1-|\Phi(z)|^2)>L|I|.
\)
Taking $J=J_j$ proves the claim.

Fix $0<\theta<1/4$.
Apply the claim inductively, with $L=n$ at the $n$th step,
to obtain nonempty finite sets $F_n\subset S(J_n)$ and
arcs $I_n\subset\T$.
Writing
\(
 s_n=|J_n|,
 b_n=\min_{z\in F_n}\delta(z),
\)
choose the scales so that
\(
 \mu_{\Phi(F_n)}(S(I_n))>n|I_n|,
 s_{n+1}<\theta\min\{s_n,b_n\}.
\)
Each $F_n$ is contained in a sequence satisfying the prescribed
separation and Carleson bounds. It therefore inherits separation
at least $\eta$ and Carleson norm at most $M$.
Moreover,
\(
 \mu_{F_n}(\D)
 =\mu_{F_n}(S(J_n))
 \leq Ms_n.
\)

Let $\Gamma=\bigcup_n F_n$.
If $z\in F_n$ and $w\in F_m$ with $m>n$, then
\(
 \delta(w)\leq s_m
 \leq s_{n+1}
 <\theta b_n
 \leq\theta\delta(z).
\)
The radial estimate in the proof of
Lemma~\ref{lem:transport-sparse-packets} gives
\(
 \rho(z,w)\geq\frac{1-\theta}{1+\theta}.
\)
Thus the sets $F_n$ are pairwise disjoint, and $\Gamma$ is
pairwise separated.

To estimate its canonical measure, fix an arc $I\subset\T$
and put $t=|I|$.
If no index $n$ satisfies $b_n\leq t$, then
$\Gamma\cap S(I)=\varnothing$.
Otherwise, let $n_0$ be the first such index.
The sets $F_n$ with $n<n_0$ do not meet $S(I)$, while
the inherited Carleson bound gives
\(
 \mu_{F_{n_0}}(S(I))\leq Mt.
\)
Since
\(
 s_{n_0+1}<\theta b_{n_0}\leq\theta t,
  s_{n+1}<\theta s_n,
\)
we have $s_{n_0+k}\leq\theta^k t$ for every $k\geq1$.
Consequently,
\[
 \sum_{n>n_0}\mu_{F_n}(S(I))
 \leq\sum_{n>n_0}\mu_{F_n}(\D)
 \leq M\sum_{k=1}^{\infty}s_{n_0+k}
 \leq Mt\sum_{k=1}^{\infty}\theta^k
 =\frac{M\theta}{1-\theta}\,t.
\]
It follows that
\[
 \mu_\Gamma(S(I))
 \leq Mt+\frac{M\theta}{1-\theta}\,t
 =\frac{M}{1-\theta}|I|.
\]
Hence $\mu_\Gamma$ is a Carleson measure.
Together with pairwise separation, Carleson's interpolation
theorem yields $\Gamma\in\Cseq$.

On the other hand, $F_n\subset\Gamma$ and injectivity of $\Phi$
give, for every $n$,
\[
 \frac{\mu_{\Phi(\Gamma)}(S(I_n))}{|I_n|}
 \geq
 \frac{\mu_{\Phi(F_n)}(S(I_n))}{|I_n|}
 >n.
\]
Thus $\mu_{\Phi(\Gamma)}$ is not a Carleson measure, contradicting
the assumption that $\Phi$ maps every interpolating sequence
to an interpolating sequence.
\end{proof}

To pass from canonical measures of interpolating sequences
to arbitrary positive Carleson measures, we use the following
form of Garnett's discretization.

\begin{samepage}
\begin{lem}
\label{lem:transport-garnett-discretization}
There is an absolute constant $A_0<\infty$ with the following property.
For every positive Carleson measure $\mu$ with $\|\mu\|_{\CM}\leq1$, there
exist $\eta_\mu>0$, $M_\mu<\infty$, and positive measures
\(
 \mu_n=\sum_{k=1}^{N_n}c_{n,k}\mu_{\Lambda_{n,k}},
  c_{n,k}\geq0,
 \sum_{k=1}^{N_n}c_{n,k}\leq A_0,
\)
such that $\mu_n\to\mu$ vaguely and
\(
 \operatorname{sep}(\Lambda_{n,k})\geq\eta_\mu,
  \|\mu_{\Lambda_{n,k}}\|_{\CM}\leq M_\mu\) for all \(n,k\).
\end{lem}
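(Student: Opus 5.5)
Following Garnett's discretization of Carleson measures, the plan is to approximate $\mu$ by dyadically discretized measures and then split each discrete measure into finitely many separated pieces.

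\emph{Step 1: truncation.} For $n\geq1$ let $\mu^{(n)}$ be the restriction of $\mu$ to $\{|z|\leq1-2^{-n}\}$. Then $\mu^{(n)}\to\mu$ vaguely and $\|\mu^{(n)}\|_{\CM}\leq1$. Hence it suffices to approximate each $\mu^{(n)}$ within $1/n$ in a fixed metric for the vague topology, while keeping the constants $\eta_\mu$ and $M_\mu$ uniform in $n$.

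\emph{Step 2: atomization.} Partition the disk into dyadic Whitney boxes
\[
 Q_{j,k}=\{r e^{2\pi i\phi}:2^{-j-1}<1-r\leq2^{-j},\ k2^{-j}\leq\phi<(k+1)2^{-j}\}.
\]
For a parameter $m\geq n$, subdivide each box into $4^{m}$ subboxes of hyperbolic diameter $O(2^{-m})$, and choose one point $w$ in each subbox. Put $\nu=\sum_w\mu^{(n)}(\text{subbox})\,\delta_w$. This measure converges vaguely to $\mu^{(n)}$ as $m\to\infty$, it is supported in a compact set, and it satisfies $\|\nu\|_{\CM}\lesssim1$. The constant is absolute, since each subbox lies in a slightly enlarged Carleson box.

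The difficulty is that the support points of $\nu$ are not uniformly separated as $m$ grows. I therefore approximate differently. Write the mass of $\mu^{(n)}$ in the Whitney box $Q$ as $\mu^{(n)}(Q)=\beta_Q\,\ell(Q)$ with $\ell(Q)=2^{-j}$. The Carleson condition gives $\beta_Q\leq C$ and $\sum_{Q\subset S(I)}\beta_Q\ell(Q)\lesssim|I|$. For the centers $z_Q$, the measure $\sum_Q\beta_Q(1-|z_Q|^2)\delta_{z_Q}$ approximates $\mu^{(n)}$ only at scale one, which is not vague convergence.

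\emph{Step 3: the level decomposition.} The remedy is to use the standard decomposition of Garnett into level sets. Fix the hyperbolic resolution $m$. Within each Whitney box the local grid of $4^m$ points is $2^{-m}$-separated. Split the grid by residue classes modulo $2^m$ in both coordinates into $4^m$ classes $\Lambda_{k}$, each containing at most one point per Whitney box. Then each $\Lambda_k$ is separated by an absolute constant $\eta_0$, and $\|\mu_{\Lambda_k}\|_{\CM}\leq M_0$, because one point per Whitney box gives a Carleson constant of order one. Next, write $\nu=\sum_k\sum_{w\in\Lambda_k}c_w(1-|w|^2)\delta_w$ with $c_w\lesssim4^{-m}\beta_Q$. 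The nonconstant weights are handled by layer-cake. Write $c_w=\int_0^{\infty}\mathbf 1_{\{c_w>t\}}\,dt$ and discretize $t$ into finitely many levels. Each level set $\{w\in\Lambda_k:c_w>t\}$ is a subsequence of $\Lambda_k$, so it inherits the bounds $\eta_0$ and $M_0$.

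The resulting coefficients total $\sum_k\max_w c_w\lesssim\sum_k4^{-m}\sup_Q\beta_Q\leq C$, using $\|\mu\|_{\CM}\leq1$. This gives $A_0$, and $\eta_\mu$ and $M_\mu$ are in fact absolute. The main obstacle is exactly this bookkeeping. The total-coefficient bound requires $\sup_Q\beta_Q\lesssim1$ rather than a Carleson sum, and that holds because $\mu(Q)\leq\mu(S(I_Q))\leq|I_Q|$. I also need to check that the finite level discretization error tends to zero vaguely, which holds on compacta since only finitely many atoms are involved for fixed $n$. A diagonal choice $m=m(n)$ then finishes the proof.
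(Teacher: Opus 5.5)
\textbf{There is a genuine gap in Step~3.} You assert that each residue class $\Lambda_k$ satisfies $\|\mu_{\Lambda_k}\|_{\CM}\leq M_0$ with $M_0$ absolute, because it has at most one point per Whitney box. This is false.

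\emph{The residue classes are not uniformly Carleson.} A sequence with one point in every Whitney box is a hyperbolic lattice. Its canonical measure behaves like $(1-|z|)^{-1}\,dA(z)$, which is not Carleson. For $|I|=2^{-j_0}$, the box $S(I)$ contains about $2^{j-j_0}$ Whitney boxes of level $j\geq j_0$, each of weight $\asymp2^{-j}$. So every level contributes $\asymp|I|$, and after truncation at depth $n$ one gets $\mu_{\Lambda_k}(S(I))\asymp(n-j_0)|I|$. Your grid and its residue classes do not depend on $\mu$, so this already happens for a small multiple of area measure $dA$, which is Carleson and charges every Whitney box. There the classes restricted to $\operatorname{supp}\mu^{(n)}$ have Carleson norms $\asymp n$.

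\emph{The layer-cake step does not repair this.} A level set $\{w\in\Lambda_k:c_w>t\}$ only has the Chebyshev bound $\lesssim t^{-1}$ on its Carleson norm. For area measure, $c_w\asymp4^{-m}2^{-j}$ at level $j$. Hence the level sets needed to reproduce $\mu$ down to depth $J$ contain $J$ full lattice levels and have Carleson norm $\asymp J$. Vague convergence forces $J\to\infty$, so this construction admits no $M_\mu$ independent of $n$.

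\emph{The coefficient count is also wrong.} The bound $c_w\lesssim4^{-m}\beta_Q$ holds only when $\mu$ is spread evenly over the $4^m$ subboxes. In general you only have $c_w\lesssim\beta_Q$, since all of $\mu(Q)$ may lie in one subbox. If the heavy subboxes fall into different residue classes in different Whitney boxes, then $\sum_k\max_{w\in\Lambda_k}c_w$ is of order $4^m$, and no absolute $A_0$ results.

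\textbf{The missing idea.} The splitting into sequences must be adapted to $\mu$, so that each sequence inherits the Carleson property of $\mu$ rather than that of the Whitney grid. One way to do this is as follows.
\begin{enumerate}
\item Place about $N\mu(q)/\ell(Q)$ points, rounded down, near a fixed point of each fine subbox $q$ of a Whitney box $Q$. The union then has Carleson norm $\lesssim N$ and at most about $N$ points per Whitney box.
\item Distribute these points among $O(N)$ sequences by a layering argument along the dyadic tree, as in splitting a Carleson family into sparse ones.
\item Combine this with a colouring by the parities of level and angular position, so that points of one sequence never lie in adjacent boxes.
\end{enumerate}
Each sequence then has bounded separation and Carleson constants, and $N^{-1}\sum_k\mu_{\Lambda_k}$ has coefficient sum $O(1)$. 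This is exactly the content of Garnett's lemma.

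\textbf{The paper's route.} The paper does not reprove Garnett's lemma. It transfers $\mu$ to $\mathbb H$ by the weighted Cayley transform and bounds the half-plane Carleson norm with Lemma~\ref{lem:transport-kernel-tests}. It then quotes from Gonz\'alez--Nicolau the discretization $\sigma_n=N(n)^{-1}\sum_{k=1}^{4N(n)}\widetilde\nu_{n,k}$, whose sequences have uniformly bounded interpolation constants $L_\mu$. Pulling back to the disk, it obtains $\operatorname{sep}(\Lambda_{n,k})\geq L_\mu^{-1}$ and $\|\mu_{\Lambda_{n,k}}\|_{\CM}\leq CL_\mu^2$ from Lemma~\ref{lem:transport-interpolation-bounds}.
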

\end{samepage}

\begin{proof}
The assertion is immediate when $\mu=0$, by taking all
coefficients equal to zero. Assume henceforth that $\mu\ne0$.

We first transfer $\mu$ to the upper half plane. Set
\(
 \tau(z)=i\frac{1+z}{1-z},
 \nu=\tau_*\bigl(|\tau'|\mu/2\bigr).
\)
For $a,z\in\D$, the identities
\(
 |\tau'(z)|=\frac{2}{|1-z|^2},
 \operatorname{Im}\tau(a)
 =\frac{1-|a|^2}{|1-a|^2},
\)
and
\(
 |\tau(z)-\overline{\tau(a)}|^2
 =\frac{4|1-\overline a z|^2}
        {|1-z|^2|1-a|^2}
\)
give
\[
 \int_{\mathbb H}
 \frac{\operatorname{Im}\tau(a)}
      {|w-\overline{\tau(a)}|^2}\,d\nu(w)
 =
 \frac14\int_\D
 \frac{1-|a|^2}{|1-\overline a z|^2}\,d\mu(z).
\]
Since $\tau$ maps $\D$ onto $\mathbb H$,
Lemma~\ref{lem:transport-kernel-tests} implies
\[
 \|\nu\|_{\CM(\mathbb H)}
 \lesssim
 \sup_{b\in\mathbb H}
 \int_{\mathbb H}
 \frac{\operatorname{Im}b}{|w-\overline b|^2}\,d\nu(w)
 =\frac14
 \sup_{a\in\D}
 \int_\D
 \frac{1-|a|^2}{|1-\overline a z|^2}\,d\mu(z)
 \lesssim \|\mu\|_{\CM}\leq1.
\]
Thus there is an absolute constant $A\geq1$ such that
$\|\nu\|_{\CM(\mathbb H)}\leq A$.

We now apply Garnett's discretization in the form recorded in
\cite[p.~285, proof of Corollary~1]{GN1998} to $\nu/A$.
It provides positive measures
\(
 \sigma_n=\frac1{N(n)}
           \sum_{k=1}^{4N(n)}\widetilde\nu_{n,k},
 \widetilde\nu_{n,k}
 =\sum_j\operatorname{Im}w_{n,k,j}\,
                  \delta_{w_{n,k,j}},
\)
such that $\sigma_n\to\nu/A$ vaguely. Moreover, the sequences
\(
 W_{n,k}=(w_{n,k,j})_j
\)
have uniformly bounded interpolation constants: there exists
$L_\mu\geq1$, independent of $n$ and $k$, such that
\(
 \mathfrak I_{\mathbb H}(W_{n,k})\leq L_\mu\) for all \(n,k\).

Let $s=\tau^{-1}$, and define the inverse weighted transfer by
\(
 \mathcal R\sigma=s_*\bigl(2|s'|\sigma\bigr).
\)
Since
\(
 s(w)=\frac{w-i}{w+i},
 2|s'(w)|=\frac4{|w+i|^2},
\)
we have
\(
 \mathcal R\nu=\mu,
 2|s'(w)|\operatorname{Im}w=1-|s(w)|^2.
\)
Define
\(
 \lambda_{n,k,j}=s(w_{n,k,j}),
 \Lambda_{n,k}=(\lambda_{n,k,j})_j,
 \mu_n=\mathcal R(A\sigma_n).
\)
The preceding weight identity gives
\[
 \mathcal R\widetilde\nu_{n,k}
 =\sum_j(1-|\lambda_{n,k,j}|^2)
                  \delta_{\lambda_{n,k,j}}
 =\mu_{\Lambda_{n,k}},
\]
and hence
\(
 \mu_n=\frac{A}{N(n)}
       \sum_{k=1}^{4N(n)}\mu_{\Lambda_{n,k}}.
\)
Thus we may take
\(
 N_n=4N(n),
 c_{n,k}=\frac{A}{N(n)},
 \sum_{k=1}^{N_n}c_{n,k}=4A.
\)

We next verify vague convergence on the disk.
For $f\in C_c(\D)$, the function
\(
 g(w)=2|s'(w)|f(s(w))
\)
belongs to $C_c(\mathbb H)$, because its support is contained
in the compact set $\tau(\operatorname{supp}f)$.
Therefore
\[
 \int_\D f\,d\mu_n
 =A\int_{\mathbb H}g\,d\sigma_n
 \longrightarrow
 \int_{\mathbb H}g\,d\nu
 =\int_\D f\,d\mu.
\]
This proves $\mu_n\to\mu$ vaguely.

Finally, composition with $\tau$ gives an isometric
correspondence between $H^\infty(\mathbb H)$ and $H^\infty(\D)$.
Consequently,
\(
 \mathfrak I_\D(\Lambda_{n,k})
 =\mathfrak I_{\mathbb H}(W_{n,k})
 \leq L_\mu.
\)
Lemma~\ref{lem:transport-interpolation-bounds} now yields
\(
 \operatorname{sep}(\Lambda_{n,k})\geq L_\mu^{-1},
 \|\mu_{\Lambda_{n,k}}\|_{\CM}\leq CL_\mu^2\) for all \(n,k\).
Taking
\(
 \eta_\mu=L_\mu^{-1},
 M_\mu=CL_\mu^2,
 A_0=4A
\)
completes the proof. The first two constants may depend on
$\mu$, whereas $A_0$ is absolute.
\end{proof}

For a continuous map $\Phi:\D\to\D$, define
\(
 q_\Phi(z)=\frac{1-|\Phi(z)|^2}{1-|z|^2},
 T_\Phi\mu=\Phi_*(q_\Phi\mu).
\)
Thus, for a Borel set $E\subset\D$,
\(
 (T_\Phi\mu)(E)
 =\int_{\Phi^{-1}(E)}q_\Phi(z)\,d\mu(z).
\)
The canonical measures satisfy
\begin{equation}
\label{eq:transport-canonical-identity}
 T_\Phi\mu_\Lambda
 =\sum_j(1-|\Phi(\lambda_j)|^2)\delta_{\Phi(\lambda_j)}
 =\mu_{\Phi(\Lambda)}.
\end{equation}
With atoms counted according to their indices, this identity holds even
when $\Phi$ is not injective. We can now combine the metric and measure estimates to
characterize continuous preservers in one direction.

\begin{samepage}
\begin{thm}
\label{thm:transport-one-sided}
For a continuous map $\Phi:\D\to\D$, the following conditions are equivalent:
\begin{enumerate}
\renewcommand{\labelenumi}{\textup{(\roman{enumi})}}
 \item $\Lambda\in\Cseq$ implies $\Phi(\Lambda)\in\Cseq$ for every indexed
       sequence $\Lambda\subset\D$.
 \item $\Phi$ is a homeomorphism of $\D$ onto itself, $\Phi^{-1}$ is
       uniformly continuous with respect to $\rho$, and there is a constant
       $C_\Phi<\infty$ such that
       \begin{equation}
       \label{eq:transport-positive-measure-bound}
       \|T_\Phi\mu\|_{\CM}\leq C_\Phi\|\mu\|_{\CM}
       \end{equation}
       for every positive Carleson measure $\mu$.
\end{enumerate}
\end{thm}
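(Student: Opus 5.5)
For (i)$\Rightarrow$(ii) I would first pin down the topology. Proposition~\ref{prop:transport-injective-proper} gives that $\Phi$ is injective and that $|z_n|\to1$ forces $|\Phi(z_n)|\to1$. A continuous injective map $\D\to\D$ is open by invariance of domain, so $\Phi(\D)$ is open. Properness also makes $\Phi(\D)$ relatively closed in $\D$: if $\Phi(z_n)\to w\in\D$, then $(z_n)$ stays in a compact subdisk, and a limit point $z$ satisfies $\Phi(z)=w$. By connectedness $\Phi$ is onto, hence a homeomorphism of $\D$.

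For uniform continuity of $\Phi^{-1}$ in $\rho$ I would argue by contradiction. Suppose there are pairs $z_n,w_n$ with $\rho(z_n,w_n)\geq\varepsilon$ while $\rho(\Phi(z_n),\Phi(w_n))\to0$. On compact subdisks $\Phi^{-1}$ is uniformly continuous, so $\delta(z_n)\to0$. Since $\rho(z_n,w_n)\leq1$ and the pseudohyperbolic distance controls the ratio of $\delta$'s only when $\rho$ stays away from $1$, I would first reduce to the case $\rho(z_n,w_n)\in[\varepsilon,1/2]$ by inserting intermediate points along a geodesic and using the continuity of $\Phi$. Then $\delta(w_n)\to0$ as well. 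Passing to a subsequence with $\max\delta(F_{n+1})\leq\tfrac14\min\delta(F_n)$ for $F_n=\{z_n,w_n\}$, Lemma~\ref{lem:transport-sparse-packets} makes $\bigcup F_n$ interpolating. Its image is not separated, which contradicts (i).

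For the measure bound I would take $\mu\in\CM$ with $\|\mu\|_{\CM}\leq1$, by homogeneity. Lemma~\ref{lem:transport-garnett-discretization} gives $\mu_n=\sum_k c_{n,k}\mu_{\Lambda_{n,k}}\to\mu$ vaguely. By \eqref{eq:transport-canonical-identity} and Proposition~\ref{prop:transport-uniformization},
\[
\|T_\Phi\mu_n\|_{\CM}\leq A_0B(\Phi,\eta_\mu,M_\mu).
\]
Because $\Phi$ is a homeomorphism and $q_\Phi$ is continuous, $T_\Phi\mu_n\to T_\Phi\mu$ vaguely. Lower semicontinuity on the open sets $\operatorname{int}S(I)$, together with the remark that closed or open boxes give equivalent norms, then yields $T_\Phi\mu\in\CM$. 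The constant here still depends on $\mu$.

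Upgrading this to a uniform constant $C_\Phi$ is the main obstacle, and I would handle it by a summation argument on the positive cone. Suppose no uniform bound holds. Then there are $\nu_k$ with $\|\nu_k\|_{\CM}\leq1$ and $\|T_\Phi\nu_k\|_{\CM}\geq4^k$. Put $\mu=\sum_k2^{-k}\nu_k$, which lies in $\CM$ with norm at most $1$. Since $T_\Phi$ is linear and monotone on positive measures, $T_\Phi\mu\geq2^{-k}T_\Phi\nu_k$, so $\|T_\Phi\mu\|_{\CM}\geq2^k$ for all $k$. This contradicts the qualitative bound just established.

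For (ii)$\Rightarrow$(i), take $\Lambda\in\Cseq$. Then $\mu_\Lambda\in\CM$, and by \eqref{eq:transport-canonical-identity} together with \eqref{eq:transport-positive-measure-bound}, $\mu_{\Phi(\Lambda)}\in\CM$. For separation, let $\eta=\operatorname{sep}(\Lambda)>0$. Uniform continuity of $\Phi^{-1}$ gives $\kappa>0$ such that $\rho(\Phi(z),\Phi(w))<\kappa$ implies $\rho(z,w)<\eta$. Hence $\operatorname{sep}(\Phi(\Lambda))\geq\kappa$, and the image has no repetitions since $\Phi$ is injective. Carleson's theorem then gives $\Phi(\Lambda)\in\Cseq$.
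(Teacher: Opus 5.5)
Your proposal follows the paper's proof essentially step by step. You get the homeomorphism from invariance of domain and the properness in Proposition~\ref{prop:transport-injective-proper}, and uniform continuity of $\Phi^{-1}$ from two-point packets and Lemma~\ref{lem:transport-sparse-packets}. The qualitative Carleson bound comes from Lemma~\ref{lem:transport-garnett-discretization}, Proposition~\ref{prop:transport-uniformization} and lower semicontinuity on open boxes, the uniform constant from the same $\sum_k 2^{-k}\nu_k$ argument on the positive cone, and the converse from separation plus \eqref{eq:transport-canonical-identity}.

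The one step to fix is your reduction to $\rho(z_n,w_n)\in[\varepsilon,1/2]$. As written, you insert points $u_n$ on the geodesic from $z_n$ to $w_n$ and appeal to continuity of $\Phi$. That does not keep $\rho(\Phi(z_n),\Phi(u_n))$ small, since the image of that geodesic can wander far from $\Phi(z_n)$ even though its endpoints are close.

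The reduction is also unnecessary. Your compactness argument works verbatim with the roles of $z_n$ and $w_n$ exchanged, so it gives $\delta(w_n)\to0$ directly, and this is exactly what the paper does. If you want to keep the reduction, take the geodesic from $\Phi(z_n)$ to $\Phi(w_n)$ and pull it back by the continuous map $\Phi^{-1}$. Then choose $u_n$ on the resulting curve with $\rho(z_n,u_n)=\min\{\varepsilon,1/2\}$ by the intermediate value theorem. Because the distance from $\Phi(z_n)$ is monotone along that geodesic, $\rho(\Phi(z_n),\Phi(u_n))\leq\rho(\Phi(z_n),\Phi(w_n))\to0$.
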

\end{samepage}

\begin{proof}
Assume~(i). Proposition~\ref{prop:transport-injective-proper}
shows that $\Phi$ is injective and satisfies
\(
 |z_n|\rightarrow1
 \Longrightarrow
 |\Phi(z_n)|\rightarrow1.
\)
Since $\Phi$ is continuous and injective, invariance of domain
implies that $\Phi$ is an open map.
For every compact set $K\subset\D$, the above boundary behavior
forces $\Phi^{-1}(K)$ to lie in a compact subdisk of $\D$.
Moreover, $\Phi^{-1}(K)$ is closed by continuity, and hence compact.
Thus $\Phi$ is proper and its image is closed in $\D$.
Since $\Phi(\D)$ is nonempty and both open and closed in the
connected disk $\D$, it follows that $\Phi(\D)=\D$.
The open mapping property then gives continuity of the inverse.

Suppose that $\Phi^{-1}$ is not uniformly continuous with respect to $\rho$.
There are $\varepsilon_0>0$ and point pairs $z_n,w_n\in\D$ such that
\(
 \rho(z_n,w_n)\geq\varepsilon_0,
  \rho(\Phi(z_n),\Phi(w_n))\rightarrow0.
\)
If a subsequence of $(z_n)$ remained in a compact subdisk, a further
subsequence would satisfy $z_n\to z\in\D$. Continuity of $\Phi$ would imply
$\Phi(w_n)\to\Phi(z)$, and continuity of $\Phi^{-1}$ would then give
$w_n\to z$, a contradiction. Thus $|z_n|\to1$; the same argument gives
$|w_n|\to1$. Passing to a subsequence, arrange that
\[
 \max\{\delta(z_{n+1}),\delta(w_{n+1})\}
 \leq\tfrac14\min\{\delta(z_n),\delta(w_n)\}.
\]
By Lemma~\ref{lem:transport-sparse-packets}, the sequence
$\Gamma=(z_1,w_1,z_2,w_2,\ldots)$ is interpolating, whereas its image is not
pairwise separated. This contradicts~(i).

We next show that $T_\Phi$ maps every positive Carleson measure to a Carleson
measure, initially without a bound uniform in that measure. Fix
$\|\mu\|_{\CM}\leq1$ and take the approximation in
Lemma~\ref{lem:transport-garnett-discretization}. Proposition~\ref{prop:transport-uniformization}
and \eqref{eq:transport-canonical-identity} yield
\(
 \|T_\Phi\mu_{\Lambda_{n,k}}\|_{\CM}
 \leq B(\Phi,\eta_\mu,M_\mu).
\)
Consequently,
\begin{equation}
\label{eq:transport-approximant-bound}
 \|T_\Phi\mu_n\|_{\CM}
 \leq B_{\Phi,\mu}
 :=A_0B(\Phi,\eta_\mu,M_\mu).
\end{equation}
Since $\Phi$ is proper, $q_\Phi(f\circ\Phi)\in C_c(\D)$ for every
$f\in C_c(\D)$. Hence
\[
 \int f\,d(T_\Phi\mu_n)
 =\int q_\Phi(f\circ\Phi)\,d\mu_n
 \longrightarrow
 \int q_\Phi(f\circ\Phi)\,d\mu
 =\int f\,d(T_\Phi\mu).
\]
All these measures are locally finite: a compact target set has compact
preimage, on which $q_\Phi$ is bounded. Thus $T_\Phi\mu_n\to T_\Phi\mu$
vaguely.

We now pass the Carleson bound to the limit. Write
\(
 \nu_n=T_\Phi\mu_n,
 \nu=T_\Phi\mu,
 B=B_{\Phi,\mu}.
\)
Then \(\nu_n\to\nu\) vaguely and
\(\|\nu_n\|_{\CM}\leq B\) for every \(n\).

For every open set \(G\subset\D\), we have
\(
 \nu(G)\leq\liminf_{n\to\infty}\nu_n(G).
\)
Indeed, if $f\in C_c(G)$ satisfies $0\leq f\leq1$,
we may extend $f$ by zero to a function in $C_c(\D)$.
Vague convergence gives
\[
 \int_G f\,d\nu
 =\lim_{n\to\infty}\int_G f\,d\nu_n
 \leq\liminf_{n\to\infty}\nu_n(G).
\]
Taking the supremum over such functions $f$ proves the
claim by inner regularity of the locally finite Borel
measure $\nu$.

Fix an arc $I\subset\T$ with $0<|I|<1$.
Choose slightly larger concentric arcs $I_\epsilon$
such that
\(
 |I|<|I_\epsilon|<1,
 |I_\epsilon|\downarrow|I|,
 S(I)\subset G_\epsilon
 :=\operatorname{int}_\D S(I_\epsilon).
\)
The inequality for open sets and the uniform Carleson bound give
\[
 \nu(S(I))
 \leq \nu(G_\epsilon)
 \leq \liminf_{n\to\infty}\nu_n(G_\epsilon)
 \leq \liminf_{n\to\infty}\nu_n(S(I_\epsilon))
 \leq B|I_\epsilon|.
\]
Letting $\epsilon\downarrow0$, we obtain
\(
 \nu(S(I))\leq B|I|.
\)

For $I=\T$, apply the same inequality for open sets to
$G=\D$. Since $S(\T)=\D$, we have
\(
 \nu(\D)
 \leq\liminf_{n\to\infty}\nu_n(\D)
 \leq B.
\)
Consequently,
\(
 \|T_\Phi\mu\|_{\CM}=\|\nu\|_{\CM}\leq B_{\Phi,\mu}.
\)
Thus $T_\Phi\mu$ is a Carleson measure.
By homogeneity, this conclusion holds for every positive
Carleson measure.

Positivity now upgrades this qualitative conclusion to the uniform bound
\eqref{eq:transport-positive-measure-bound}. If no such bound existed, we
could choose positive measures $\sigma_m$ with
\(
 \|\sigma_m\|_{\CM}\leq1,
 \|T_\Phi\sigma_m\|_{\CM}>4^m.
\)
The positive measure $\sigma=\sum_{m\geq1}2^{-m}\sigma_m$ satisfies
$\|\sigma\|_{\CM}\leq1$, so the preceding argument gives
$T_\Phi\sigma\in\CM$. On the other hand, monotone convergence yields
\[
 T_\Phi\sigma
 =\sum_{m\geq1}2^{-m}T_\Phi\sigma_m
 \geq2^{-m}T_\Phi\sigma_m
 \qquad(m\geq1).
\]
Thus $\|T_\Phi\sigma\|_{\CM}>2^m$ for every $m$, a contradiction.
Homogeneity proves \eqref{eq:transport-positive-measure-bound}.

Conversely, assume~(ii) and let $\Lambda\in\Cseq$. If
$\operatorname{sep}(\Lambda)\geq\eta>0$, uniform continuity of
$\Phi^{-1}$ provides $\varepsilon>0$ such that
\(
 \rho(\Phi(z),\Phi(w))<\varepsilon
 \Longrightarrow\rho(z,w)<\eta.
\)
Hence $\operatorname{sep}(\Phi(\Lambda))\geq\varepsilon$. Equations
\eqref{eq:transport-canonical-identity} and
\eqref{eq:transport-positive-measure-bound} imply
$\mu_{\Phi(\Lambda)}\in\CM$. The interpolation theorem now gives
$\Phi(\Lambda)\in\Cseq$.
\end{proof}

\begin{remark}
\label{rem:transport-distinct-roles}
The two quantitative requirements in
Theorem~\ref{thm:transport-one-sided} have distinct roles: uniform continuity
of the inverse preserves pairwise separation, while boundedness of the
weighted pushforward preserves the Carleson measure condition. The factor
$q_\Phi$ converts the canonical source weights into the corresponding image
weights; no uniform upper or positive lower bound on $q_\Phi$ is imposed.
\end{remark}

\begin{samepage}
\begin{thm}
\label{thm:transport-two-sided}
For a continuous map $\Phi:\D\to\D$, the following conditions are equivalent:
\begin{enumerate}
\renewcommand{\labelenumi}{\textup{(\roman{enumi})}}
 \item For every indexed sequence $\Lambda\subset\D$,
       \(
        \Lambda\in\Cseq\Longleftrightarrow
        \Phi(\Lambda)\in\Cseq.
       \)
 \item $\Phi$ is a homeomorphism of $\D$ onto itself, both $\Phi$ and
       $\Phi^{-1}$ are uniformly continuous with respect to $\rho$, and
       $T_\Phi,T_{\Phi^{-1}}$ act boundedly on the cone of positive Carleson
       measures.
\end{enumerate}
\end{thm}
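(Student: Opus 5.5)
The plan is to reduce both implications to Theorem~\ref{thm:transport-one-sided}, applied once to $\Phi$ and once to $\Phi^{-1}$. The only thing to check is that condition~(i) of the present theorem says exactly that both maps preserve $\Cseq$ in the forward direction.

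For (i)$\Rightarrow$(ii), I would first use the forward implication in (i) together with Theorem~\ref{thm:transport-one-sided}. This shows that $\Phi$ is a homeomorphism of $\D$, that $\Phi^{-1}$ is $\rho$-uniformly continuous, and that $T_\Phi$ satisfies \eqref{eq:transport-positive-measure-bound}. Next I would show that $\Phi^{-1}$, which is continuous, maps $\Cseq$ into $\Cseq$. Given $\Gamma\in\Cseq$, put $\Lambda=\Phi^{-1}(\Gamma)$, reindexed with the same indices. Then $\Phi(\Lambda)=\Gamma$ as indexed sequences, so the reverse implication in (i) gives $\Lambda\in\Cseq$. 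Applying Theorem~\ref{thm:transport-one-sided} to $\Phi^{-1}$ then shows that $(\Phi^{-1})^{-1}=\Phi$ is $\rho$-uniformly continuous and that $T_{\Phi^{-1}}$ is bounded on the positive Carleson cone.

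For (ii)$\Rightarrow$(i), I would apply the converse direction of Theorem~\ref{thm:transport-one-sided} twice. First, $\Phi$ is a homeomorphism, $\Phi^{-1}$ is $\rho$-uniformly continuous, and $T_\Phi$ is bounded, so $\Lambda\in\Cseq\Rightarrow\Phi(\Lambda)\in\Cseq$. Second, $\Phi^{-1}$ is a continuous self-homeomorphism of $\D$ whose inverse $\Phi$ is $\rho$-uniformly continuous and for which $T_{\Phi^{-1}}$ is bounded, so $\Phi^{-1}$ preserves $\Cseq$. Hence if $\Phi(\Lambda)\in\Cseq$, then $\Lambda=\Phi^{-1}(\Phi(\Lambda))\in\Cseq$.

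I expect no real obstacle here. The one point needing care is the indexing convention: images retain repetitions, and $\Phi$ is a bijection, so $\Phi^{-1}(\Phi(\Lambda))=\Lambda$ as indexed families and the reverse implication transfers exactly. I would also state the bound for $T_{\Phi^{-1}}$ in the same form as \eqref{eq:transport-positive-measure-bound}, which is what ``acts boundedly on the cone'' means.
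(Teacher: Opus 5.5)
Your proposal is correct and follows essentially the same route as the paper: both implications come from applying Theorem~\ref{thm:transport-one-sided} to $\Phi$ and to its continuous inverse $\Phi^{-1}$. The reverse implication in (i), applied to $\Lambda=\Phi^{-1}(\Gamma)$, is what shows that $\Phi^{-1}$ preserves $\Cseq$; your remark on indexing is a harmless extra detail that the paper leaves implicit.
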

\end{samepage}

\begin{proof}
Assume~(i). Theorem~\ref{thm:transport-one-sided} implies
that $\Phi$ is a homeomorphism of $\D$ onto itself,
$\Phi^{-1}$ is uniformly continuous with respect to $\rho$,
and $T_\Phi$ is bounded on the cone of positive Carleson measures.

For every interpolating sequence $\Gamma$, applying~(i) to
$\Lambda=\Phi^{-1}(\Gamma)$ shows that $\Lambda$ is interpolating.
Thus $\Phi^{-1}$ also preserves interpolating sequences.
Applying Theorem~\ref{thm:transport-one-sided} to $\Phi^{-1}$
shows that $\Phi$ is uniformly continuous with respect to $\rho$
and that $T_{\Phi^{-1}}$ is bounded on the same cone.
This proves~(ii).

Conversely, assume~(ii).
Applying Theorem~\ref{thm:transport-one-sided} separately to
$\Phi$ and $\Phi^{-1}$ shows that both maps preserve
interpolating sequences. Since $\Phi$ is bijective, this
gives both implications in~(i).
\end{proof}

\begin{samepage}
\begin{cor}
\label{cor:transport-continuous-group}
The class $\Cbi$ is a group under composition. For $\Phi,\Theta\in\Cbi$,
\[
 q_{\Phi\circ\Theta}(z)=q_\Phi(\Theta(z))q_\Theta(z),
 \qquad T_{\Phi\circ\Theta}=T_\Phi T_\Theta.
\]
\end{cor}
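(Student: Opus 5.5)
The proof has two parts: first the group structure of $\Cbi$, then the two composition identities.

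For the group structure I work with condition (i) of Theorem~\ref{thm:transport-two-sided}. If $\Phi,\Theta\in\Cbi$, then for every indexed sequence $\Lambda$ we have
\(
 \Lambda\in\Cseq\Longleftrightarrow\Theta(\Lambda)\in\Cseq\Longleftrightarrow\Phi(\Theta(\Lambda))\in\Cseq.
\)
Since $\Phi\circ\Theta$ is continuous, this gives $\Phi\circ\Theta\in\Cbi$. The identity map obviously belongs to $\Cbi$. By Theorem~\ref{thm:transport-two-sided}, every $\Phi\in\Cbi$ is a homeomorphism of $\D$, so $\Phi^{-1}$ is continuous. For any sequence $\Gamma$ put $\Lambda=\Phi^{-1}(\Gamma)$; then (i) for $\Phi$ reads $\Phi^{-1}(\Gamma)\in\Cseq\Longleftrightarrow\Gamma\in\Cseq$, which says $\Phi^{-1}\in\Cbi$. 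Associativity is inherited from composition of maps, so $\Cbi$ is a group.

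The multiplicative identity for $q$ is a telescoping of the defining quotient:
\[
 q_{\Phi\circ\Theta}(z)=\frac{1-|\Phi(\Theta(z))|^2}{1-|\Theta(z)|^2}\cdot\frac{1-|\Theta(z)|^2}{1-|z|^2}
 =q_\Phi(\Theta(z))\,q_\Theta(z).
\]
This is legitimate because $\Theta(z)\in\D$, so $1-|\Theta(z)|^2>0$.

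For the transport identity, fix a positive Borel measure $\mu$ and a Borel set $E\subset\D$. Using $(T_\Phi\sigma)(E)=\int_{\Phi^{-1}(E)}q_\Phi\,d\sigma$ with $\sigma=T_\Theta\mu$, and the change of variables $\int g\,d(\Theta_*\nu)=\int g\circ\Theta\,d\nu$ for nonnegative Borel $g$ with $\nu=q_\Theta\mu$, I obtain
\[
 (T_\Phi T_\Theta\mu)(E)=\int\mathbf 1_{\Phi^{-1}(E)}(\Theta(z))\,q_\Phi(\Theta(z))\,q_\Theta(z)\,d\mu(z)
 =\int_{(\Phi\circ\Theta)^{-1}(E)}q_{\Phi\circ\Theta}\,d\mu.
\]
The right-hand side is $(T_{\Phi\circ\Theta}\mu)(E)$, which proves $T_{\Phi\circ\Theta}=T_\Phi T_\Theta$.

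The only point needing care is that the change-of-variables formula is applied to nonnegative Borel integrands. The functions $q_\Phi$ and $q_\Theta$ are continuous and positive, so this holds by monotone convergence with no finiteness assumption. The identity is therefore valid for all positive Borel measures, not only Carleson ones. Nothing in this argument is a real obstacle; the group property comes straight from condition (i), and the remaining identities are direct computations.
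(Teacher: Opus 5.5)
Your proposal is correct and follows the paper's proof essentially step for step. The group axioms come from the sequence-level equivalence together with the homeomorphism conclusion of Theorem~\ref{thm:transport-two-sided}, the weight identity is the same telescoping quotient, and the transport identity is the same change-of-variables computation (you test on Borel sets where the paper uses bounded nonnegative Borel functions, which is equivalent); your observation that it holds for all positive Borel measures, not only Carleson ones, is accurate.
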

\end{samepage}

\begin{proof}
The identity map belongs to $\Cbi$.
If $\Phi,\Theta\in\Cbi$, then $\Phi\circ\Theta$ is continuous,
and for every indexed sequence $\Lambda\subset\D$,
\(
 \Lambda\in\Cseq
 \Longleftrightarrow
 \Theta(\Lambda)\in\Cseq
 \Longleftrightarrow
 (\Phi\circ\Theta)(\Lambda)\in\Cseq.
\)
Thus $\Phi\circ\Theta\in\Cbi$.

By Theorem~\ref{thm:transport-two-sided}, every $\Phi\in\Cbi$
is a homeomorphism of $\D$ onto itself.
For any indexed sequence $\Gamma\subset\D$, applying the
preservation equivalence for $\Phi$ to $\Phi^{-1}(\Gamma)$ gives
\(
 \Phi^{-1}(\Gamma)\in\Cseq
 \Longleftrightarrow
 \Gamma\in\Cseq.
\)
Since $\Phi^{-1}$ is continuous, it follows that
$\Phi^{-1}\in\Cbi$.
Together with associativity of composition, this proves
the group assertion.

For the weights, their definition gives
\[
 q_{\Phi\circ\Theta}(z)
 =\frac{1-|\Phi(\Theta(z))|^2}{1-|z|^2}
 =\frac{1-|\Phi(\Theta(z))|^2}{1-|\Theta(z)|^2}
  \frac{1-|\Theta(z)|^2}{1-|z|^2}
 =q_\Phi(\Theta(z))q_\Theta(z).
\]
Now let $\mu$ be a positive Carleson measure and let $f$ be
a bounded nonnegative Borel function on $\D$.
By the definition of the weighted pushforward,
\begin{align*}
 \int_\D f\,d(T_\Phi T_\Theta\mu)
 &=\int_\D f(\Phi(w))q_\Phi(w)\,d(T_\Theta\mu)(w)
 =\int_\D f(\Phi(\Theta(z)))
       q_\Phi(\Theta(z))q_\Theta(z)\,d\mu(z)\\
 &=\int_\D f((\Phi\circ\Theta)(z))
       q_{\Phi\circ\Theta}(z)\,d\mu(z)
 =\int_\D f\,d(T_{\Phi\circ\Theta}\mu).
\end{align*}
Since this holds for every such $f$ and $\mu$, we obtain
$T_{\Phi\circ\Theta}=T_\Phi T_\Theta$
on the cone of positive Carleson measures.
\end{proof}

\begin{example}
\label{ex:one-sided-not-two-sided}
The map
\(
 F(z)=\frac{2z}{1+|z|^2}, z\in\overline\D,
\)
is a homeomorphism of the closed disk, extends smoothly to a neighborhood
of $\overline\D$, and has identity boundary values. Nevertheless,
\(
 F|_{\D}\in\Cplus\setminus\Cbi.
\)
In particular, $\Cbi\subsetneq\Cplus$, even among maps that are smooth
on the closed disk.

Indeed, the radial function $f(r)=2r/(1+r^2)$ is strictly increasing
from $0$ to $1$. Write $d_\D=2\operatorname{arctanh}\rho$ for the
hyperbolic distance. In hyperbolic polar coordinates
$R=2\operatorname{arctanh}r$, the map $F$ takes $(R,\theta)$ to
$(2R,\theta)$. With respect to the hyperbolic line element
\(d\ell^2=dR^2+\sinh^2R\,d\theta^2\), the radial and tangential
stretch factors of \(F^{-1}(R,\theta)=(R/2,\theta)\) are,
respectively,
\[
 \frac12,
 \qquad
 \frac{\sinh(R/2)}{\sinh R}
 =\frac{1}{2\cosh(R/2)}
 \leq\frac12.
\]
The same bound holds at the origin, where $D(F^{-1})(0)=I/2$.
Integrating along curves shows that $F^{-1}$ is $1/2$-Lipschitz for
$d_\D$. Consequently,
\(
 d_\D(F(z),F(w))\geq2d_\D(z,w),
 \rho(F(z),F(w))\geq\rho(z,w).
\)
Every finite partial product in the product separation condition can
therefore only increase under $F$. Passing to the infinite products
shows that $F$ preserves every interpolating sequence.

To see that preservation does not hold in both directions, let $r\uparrow1$ and set
\(
 t=1-r, z_r=r, w_r=re^{it^2},
 s_r=\frac{2r}{1+r^2}.
\)
Then $|z_r-w_r|\sim t^2$ and $1-r^2\sim2t$, whereas
\(
 1-s_r^2=\frac{(1-r^2)^2}{(1+r^2)^2}\sim t^2,
  |F(z_r)-F(w_r)|\sim t^2.
\)
Using
$|1-\overline zw|^2=(1-|z|^2)(1-|w|^2)+|z-w|^2$, we obtain
\(
 \rho(z_r,w_r)\rightarrow0,
 \rho(F(z_r),F(w_r))\rightarrow\frac1{\sqrt2}.
\)
Thus $F$ is not uniformly continuous with respect to $\rho$, and
Theorem~\ref{thm:transport-two-sided} gives $F|_{\D}\notin\Cbi$.
Finally,
\[
 q_F(z)=\frac{1-|z|^2}{(1+|z|^2)^2}\longrightarrow0
 \qquad(|z|\to1),
\]
so preservation in one direction does not force a positive lower bound for the
boundary defect ratio, even with smoothness and identity boundary values.
\end{example}

\begin{example}
\label{ex:one-sided-no-radial-limits}
There exists a $C^\infty$ diffeomorphism $F:\D\to\D$ such
that $F\in\Cplus\setminus\Cbi$ and $F$ has no radial limit
at any point of $\T$. In fact, for every $\zeta\in\T$,
every point of $\T$ occurs as a limit of $F(r_n\zeta)$ for
some sequence $r_n\uparrow1$.

Define $F(0)=0$ and
\begin{equation}
\label{eq:one-sided-no-radial-limits-map}
 F(re^{i\theta})
 =\tanh\!\left(\frac{2r}{1-r^2}\right)
  \exp\!\left(i\theta+i\log\frac{1+r^2}{1-r^2}\right),
 \qquad 0<r<1.
\end{equation}
In hyperbolic polar coordinates $R=2\operatorname{arctanh}r$,
this becomes
\[
 F(R,\theta)
 =\bigl(2\sinh R,\,\theta+\log\cosh R\bigr).
\]
The radial coordinate increases strictly from $0$ to
$\infty$, so $F$ is a homeomorphism of $\D$ onto itself.
It is smooth with invertible differential away from the
origin. Moreover, $\tanh(2r/(1-r^2))/r$ extends to an even
real analytic function near $0$ with value $2$, while
$\log((1+r^2)/(1-r^2))$ is even and real analytic there.
Consequently, near $0$ the map has the form
$F(z)=A(|z|^2)z$, where $A$ is smooth and $A(0)=2$.
Thus $DF(0)=2I$, and $F$ is a $C^\infty$ diffeomorphism
of $\D$.

We first prove preservation in one direction.
For the hyperbolic line element
$d\ell^2=dR^2+\sinh^2R\,d\theta^2$, the matrix of
$D(F^{-1})$ at $F(R,\theta)$, in the corresponding
hyperbolic orthonormal frames, is
\[
 \begin{pmatrix}
  \dfrac{1}{2\cosh R}&0\\[6pt]
  -\dfrac12\tanh^2R&
  \dfrac{\sinh R}{\sinh(2\sinh R)}
 \end{pmatrix}.
\]
Since $\sinh(2x)\geq2x$ for $x\geq0$, the lower right
entry is at most $1/2$ for $R>0$.
Writing $u=\tanh^2R\in[0,1)$, the sum of the squares
of the entries is at most
\[
 \frac14(1-u+u^2)+\frac14\leq\frac12.
\]
The operator norm of $D(F^{-1})$ in the hyperbolic metric
is therefore at most $1/\sqrt2$.
The same bound holds at the origin, where
$D(F^{-1})(0)=I/2$.
Integrating along curves shows that $F^{-1}$ is
$1/\sqrt2$-Lipschitz for $d_\D$. Hence
\[
 d_\D(F(z),F(w))\geq\sqrt2\,d_\D(z,w),
 \qquad
 \rho(F(z),F(w))\geq\rho(z,w),
 \qquad z,w\in\D.
\]
Every finite partial product in Carleson's product separation
condition can therefore only increase under $F$.
Passing to the infinite products proves that $F\in\Cplus$.

The map $F$ is not uniformly continuous with respect to
$\rho$. Indeed, for $R>0$, set
$z_R=\tanh(R/2)$ and
$w_R=\tanh((R+e^{-R})/2)$.
Then $d_\D(z_R,w_R)=e^{-R}\to0$, whereas the triangle
inequality gives
\[
 d_\D(F(z_R),F(w_R))
 \geq2\bigl(\sinh(R+e^{-R})-\sinh R\bigr).
\]
The right side tends to $1$ as $R\to\infty$.
Thus $\rho(z_R,w_R)\to0$ while the image distances stay
bounded away from zero.
Theorem~\ref{thm:transport-two-sided} implies that
$F\notin\Cbi$.

Finally, fix $\zeta\in\T$ and $\alpha\in[0,2\pi)$.
Choose $R_n>0$ so that
$\log\cosh R_n=2\pi n+\alpha$, and put
$r_n=\tanh(R_n/2)$.
Then $r_n\uparrow1$ and
\[
 F(r_n\zeta)
 =\tanh(\sinh R_n)e^{i\alpha}\zeta
 \longrightarrow e^{i\alpha}\zeta.
\]
Since $|F(r\zeta)|\to1$ as $r\uparrow1$, the radial
cluster set is exactly $\T$.
In particular, $F$ has no radial limit at any point of
$\T$ and admits no continuous extension to $\overline\D$.
\end{example}

\subsection{Boundary characterization of bidirectional preservers}
\label{subsec:boundary-characterization}
We next express preservation in both directions in terms of boundary geometry.
Recall that $d_\D=2\operatorname{arctanh}\rho$ is the hyperbolic distance.
The identity $\rho=\tanh(d_\D/2)$ shows that these two distances induce
the same uniform structure.
A homeomorphism $F:\D\to\D$ will be called a
\emph{uniform homeomorphism with respect to $\rho$}, or a
\emph{$\rho$-uniform homeomorphism}, if both $F$ and $F^{-1}$
are uniformly continuous with respect to $\rho$.

A circle homeomorphism $h:\T\to\T$ is bi-Lipschitz if there
exists $L\geq1$ such that
\[
 L^{-1}d_\T(\zeta,\eta)
 \leq d_\T(h(\zeta),h(\eta))
 \leq Ld_\T(\zeta,\eta)
 \qquad(\zeta,\eta\in\T).
\]
We denote this group by $\operatorname{BiLip}(\T)$ and its subgroup of maps
that preserve orientation by $\operatorname{BiLip}^{+}(\T)$.
A circle homeomorphism $h:\T\to\T$ is quasisymmetric if there
exists $M\geq1$ such that
\(
 M^{-1}\leq\frac{|h(I)|}{|h(J)|}\leq M
\)
for every pair of nondegenerate adjacent arcs $I,J\subset\T$
with disjoint interiors and $|I|=|J|$.
Both orientations are allowed.

A circle homeomorphism $h$ that preserves orientation is
\emph{strongly quasisymmetric} if there exist $A\geq1$
and $\alpha>0$ such that, for every nondegenerate arc
$I\subset\T$ and every Borel set $E\subset I$,
\begin{equation}
\label{eq:sqs-measure-definition}
 \frac{|h(E)|}{|h(I)|}
 \leq A\left(\frac{|E|}{|I|}\right)^\alpha.
\end{equation}
Equivalently, an angular lift $H:\mathbb R\to\mathbb R$,
satisfying
\(
 h(e^{it})=e^{iH(t)},
  H(t+2\pi)=H(t)+2\pi,
\)
is locally absolutely continuous and $H'$ is an $A_\infty$ weight.
These maps form a group under composition
\cite[pp.~921--924]{FanHuShen2017}.
By absolute continuity, \eqref{eq:sqs-measure-definition}
also holds for every Lebesgue measurable subset $E\subset I$.
We also include maps that reverse orientation, obtained by composition with
complex conjugation, and denote the resulting class by
$\operatorname{SQS}^{\pm}(\T)$.
Every map in $\operatorname{SQS}^{\pm}(\T)$ is quasisymmetric, and
\(
 \operatorname{BiLip}(\T)
 \subset\operatorname{SQS}^{\pm}(\T).
\)

We use the classical characterization of quasiconformal preservers
due to Astala--Zinsmeister and Gonz\'alez--Nicolau:
a quasiconformal homeomorphism of $\D$ that preserves orientation maps every
interpolating sequence to an interpolating sequence
if and only if its boundary map is strongly quasisymmetric
\cite{AZ1991,GN1998}.
The formulation on the upper half plane appears in
\cite[p.~284, Theorem]{GN1998}; the disk formulation follows
by passage to the upper half plane using Cayley transforms,
after normalization at a boundary point.
Since strongly quasisymmetric homeomorphisms form a group,
the same criterion applies to the inverse map.
Consequently, preservation in one direction is equivalent to preservation in
both directions within the quasiconformal class.
The corresponding statement for maps that reverse orientation
follows by composition with complex conjugation.

\begin{samepage}
\begin{lem}
\label{lem:biuniform-from-preservation}
Every map in $\Cbi$ is a $\rho$-uniform homeomorphism of $\D$.
\end{lem}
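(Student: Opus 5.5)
This is essentially immediate from Theorem~\ref{thm:transport-two-sided}. Let $\Phi\in\Cbi$. By definition, $\Phi$ is a continuous map $\D\to\D$, and for every indexed sequence $\Lambda\subset\D$ we have $\Lambda\in\Cseq\Longleftrightarrow\Phi(\Lambda)\in\Cseq$. This is condition~(i) of Theorem~\ref{thm:transport-two-sided}, so condition~(ii) holds. In particular, $\Phi$ is a homeomorphism of $\D$ onto itself, and both $\Phi$ and $\Phi^{-1}$ are uniformly continuous with respect to $\rho$. That is exactly the definition of a $\rho$-uniform homeomorphism.

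To make the argument self-contained relative to the one-sided theorem, one can also proceed as follows. First, apply Theorem~\ref{thm:transport-one-sided} to $\Phi$. This gives that $\Phi$ is a homeomorphism of $\D$ and that $\Phi^{-1}$ is $\rho$-uniformly continuous. Next, observe that $\Phi^{-1}$ is continuous and preserves interpolating sequences. Indeed, if $\Gamma\in\Cseq$, put $\Lambda=\Phi^{-1}(\Gamma)$; then $\Phi(\Lambda)=\Gamma\in\Cseq$, so the reverse implication in the definition of $\Cbi$ gives $\Lambda\in\Cseq$. Finally, apply Theorem~\ref{thm:transport-one-sided} to $\Phi^{-1}$. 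It shows that $(\Phi^{-1})^{-1}=\Phi$ is $\rho$-uniformly continuous.

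I do not expect any genuine obstacle here. The substantive work was already done in Theorem~\ref{thm:transport-one-sided}, in two places: the properness and surjectivity argument via invariance of domain, and the sparse-packet construction of Lemma~\ref{lem:transport-sparse-packets}. The latter produces an interpolating sequence whose image fails separation whenever $\Phi^{-1}$ is not uniformly continuous. The only point needing care is bookkeeping. The two-sided hypothesis must be applied to the preimage sequence $\Phi^{-1}(\Gamma)$, and bijectivity, which was established first, ensures that this preimage is a well-defined indexed sequence.
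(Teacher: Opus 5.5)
Your proposal is correct and matches the paper, which proves the lemma by citing Theorem~\ref{thm:transport-two-sided} exactly as in your first paragraph. Your optional second paragraph simply reproduces the paper's own proof of that theorem: apply Theorem~\ref{thm:transport-one-sided} to $\Phi$, then to $\Phi^{-1}$.
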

\end{samepage}

This follows immediately from Theorem~\ref{thm:transport-two-sided}.

To obtain a boundary characterization, we first relate
$\rho$-uniformity to hyperbolic quasi-isometries and
boundary extensions.

\begin{samepage}
\begin{lem}
\label{lem:uniform-boundary-rigidity}
Let $F:\D\to\D$ be a $\rho$-uniform homeomorphism.
Then $F$ is a quasi-isometry for $d_\D$ and extends
uniquely to a homeomorphism of $\overline\D$.
Its boundary map $h_F$ is quasisymmetric.
If $h_F=\operatorname{id}_\T$, then
\(
  \sup_{z\in\D}d_\D(F(z),z)<\infty.
\)
\end{lem}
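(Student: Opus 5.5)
The plan has four steps: show that $F$ is a quasi-isometry, obtain the boundary extension from Gromov hyperbolicity, deduce quasisymmetry of the boundary map, and finally use a Morse-lemma argument for the identity case.

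\textbf{Quasi-isometry.} The hyperbolic disk $(\D,d_\D)$ is a geodesic space, so a uniformly continuous map is coarsely Lipschitz. Uniform continuity of $F$ with respect to $\rho$, hence with respect to $d_\D$, gives $\delta>0$ with $d_\D(z,w)\le\delta\Rightarrow d_\D(F(z),F(w))\le1$. For arbitrary $z,w$, subdivide the geodesic from $z$ to $w$ into $\lceil d_\D(z,w)/\delta\rceil$ pieces of length at most $\delta$. This yields
\[
 d_\D(F(z),F(w))\le \delta^{-1}d_\D(z,w)+1 .
\]
Applying the same argument to $F^{-1}$ gives the reverse inequality. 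Since $F$ is surjective, $F$ is a $(K,C)$ quasi-isometry of $\mathbb H^2$.

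\textbf{Boundary extension.} A quasi-isometry of the Gromov hyperbolic space $\mathbb H^2$ induces a homeomorphism of its Gromov boundary $\T$. The mechanism is the Morse lemma: images of geodesic rays stay within bounded distance of geodesic rays, and Gromov products are distorted only boundedly. The resulting map $F\cup h_F$ is continuous on $\overline\D$. To see this, take $z_n\to\zeta\in\T$. Then the Gromov products $(z_n\cdot\zeta)_0\to\infty$, so $(F(z_n)\cdot h_F(\zeta))_{F(0)}\to\infty$, which forces $F(z_n)\to h_F(\zeta)$ Euclideanly. Applying the same to $F^{-1}$ gives a continuous inverse, so $F$ extends to a homeomorphism of $\overline\D$. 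Uniqueness holds because $\D$ is dense in $\overline\D$.

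\textbf{Quasisymmetry.} Quasisymmetry of $h_F$ is the classical fact that boundary maps of quasi-isometries of $\mathbb H^2$ are quasim\"obius, equivalently quasisymmetric on the circle. I would argue it directly. For adjacent arcs $I,J$ of equal length, consider the geodesics $\gamma_I,\gamma_J$ joining their endpoints and the geodesic joining the common endpoint to the antipodal region. The ratio $|h(I)|/|h(J)|$ is controlled by the hyperbolic distance between the images of appropriate "tops" of $S(I)$ and $S(J)$. Since the image geodesics are Morse-close to the images of $\gamma_I$ and $\gamma_J$, and $F$ moves these tops a bounded hyperbolic distance relative to each other, the cross-ratio is controlled. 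Alternatively I would simply cite the standard theorem (e.g.\ Bonk--Schramm, or Ghys--de la Harpe) that boundary maps of quasi-isometries are quasisymmetric. This is the step I expect to require the most care if done by hand, so citation is preferable.

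\textbf{Identity boundary values.} Suppose $h_F=\operatorname{id}_\T$. For $z\in\D$, $z\ne0$, choose a bi-infinite geodesic $\gamma$ through $z$ with endpoints $\xi,\xi'\in\T$; for instance, take the geodesic orthogonal to the radius through $z$. The image $F(\gamma)$ is a quasi-geodesic with endpoints $h_F(\xi)=\xi$ and $h_F(\xi')=\xi'$. By the Morse lemma it lies within $D=D(K,C)$ of $\gamma$, so $F(z)$ lies within $D$ of $\gamma$. Repeat with a second geodesic $\gamma'$ through $z$ orthogonal to $\gamma$; the radial geodesic works. Then $F(z)$ lies within $D$ of both $\gamma$ and $\gamma'$. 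Two orthogonal geodesics meeting at $z$ have $D$-neighborhoods whose intersection lies in a ball about $z$ of radius $R(D)$, a fact that follows from thinness of triangles and a compactness computation at $z=0$ after an isometric normalization. Hence $d_\D(F(z),z)\le R(D)$ uniformly in $z$, which gives $\sup_{z\in\D}d_\D(F(z),z)<\infty$.
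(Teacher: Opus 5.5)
Your proof is correct. Its first two steps match the paper's. The same geodesic-subdivision argument gives the quasi-isometry, and the boundary homeomorphism and its quasisymmetry come from the standard theory of quasi-isometries of Gromov hyperbolic spaces; the paper cites V\"ais\"al\"a's Theorem~5.35 where you cite Bonk--Schramm or Ghys--de la Harpe. Those results give quasisymmetry for visual metrics, so you should add, as the paper does, that a visual metric on $\T$ based at $0$ or at $F(0)$ is comparable to $|\zeta-\eta|^\varepsilon$. That transfers quasisymmetry to the chordal metric and to $d_\T$. Your ``tops of $S(I)$'' sketch is not a proof, so the citation is what carries this step.

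The genuine difference is the identity-boundary case. The paper invokes the boundary rigidity theorem of Liang and Zhou for quasi-isometries of proper geodesic hyperbolic spaces with a pole and uniformly perfect visual boundary. You argue directly instead. Because the extension is continuous at $\T$, the images of the two orthogonal geodesics $\gamma,\gamma'$ through $z$ are continuous quasi-geodesics with the same endpoints as $\gamma,\gamma'$. The Morse lemma then places $F(z)$ within $D$ of both. Your constant $R(D)$ can be made explicit. Let $x$ be the nearest point of $\gamma$ to $F(z)$; orthogonality gives $d_\D(x,z)=d_\D(x,\gamma')\le 2D$, hence $d_\D(F(z),z)\le 3D$. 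At $z=0$, use two orthogonal diameters.

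Your route is elementary, self-contained apart from the Morse lemma, and gives a displacement bound depending only on the quasi-isometry constants. The paper's route is shorter and applies verbatim to more general hyperbolic spaces, but it rests on a recent and less standard theorem.
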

\end{samepage}

\begin{proof}
Since $\rho$ and $d_\D$ induce the same uniform structure,
both $F$ and $F^{-1}$ are uniformly continuous with respect
to $d_\D$.
Thus there exists $a>0$ such that
\[
  d_\D(z,w)\leq a
  \quad\Longrightarrow\quad
  d_\D(F(z),F(w))\leq 1.
\]
Subdividing a hyperbolic geodesic from $z$ to $w$ into
segments of length at most $a$ and applying the triangle
inequality gives
\(
  d_\D(F(z),F(w))
  \leq a^{-1}d_\D(z,w)+1.
\)
Applying the same argument to $F^{-1}$, we obtain
constants $A\geq1$ and $B\geq0$ such that
\[
  A^{-1}d_\D(z,w)-B
  \leq d_\D(F(z),F(w))
  \leq A d_\D(z,w)+B
  \qquad (z,w\in\D).
\]
Together with surjectivity, these inequalities show that
$F$ is a quasi-isometry of the hyperbolic disk.

By the boundary extension theorem for quasi-isometries
\cite[Theorem~5.35]{Vaisala2005}, applied with source
basepoint $0$ and target basepoint $F(0)$, the map $F$
induces a homeomorphism $h_F:\T\to\T$ that is
quasisymmetric with respect to visual metrics.
For a common sufficiently small visual parameter
$\varepsilon>0$, the visual metrics based at $0$ and
$F(0)$ are each comparable to $|\zeta-\eta|^\varepsilon$,
with comparison constants allowed to depend on the
basepoint.
Hence $h_F$ is quasisymmetric for the chordal metric.
Since the chordal metric is bi-Lipschitz equivalent to
$d_\T$, the map $h_F$ is also quasisymmetric for $d_\T$.

Under the standard identification of the Gromov boundary
of the hyperbolic disk with $\T$, convergence to a boundary
point agrees with Euclidean convergence.
The boundary extension theorem, together with continuity
of $F$ in $\D$, therefore gives a continuous extension
to $\overline\D$.
Applying the same argument to $F^{-1}$ gives a continuous
extension of its inverse.
The two extensions are mutually inverse, since their
compositions agree with the identity on the dense subset
$\D$ of $\overline\D$.
Thus the extension of $F$ is a homeomorphism.
Uniqueness also follows from the density of $\D$
in $\overline\D$.

Finally, assume that $h_F=\operatorname{id}_\T$.
The hyperbolic disk is a proper geodesic Gromov hyperbolic
space with a pole at $0$, since every point lies on a
geodesic ray issuing from $0$.
Its visual boundary is uniformly perfect.
The boundary rigidity theorem \cite{LiangZhou2022}
therefore implies that $F$ lies at a bounded distance
from the identity.
This proves \(\sup_{z\in\D}d_\D(F(z),z)<\infty\).
\end{proof}

We recall the following characterization of the boundary
kernel, expressed in terms of interpolating sequences and
radial boundary values; see
\cite[Theorems~2.8 and~5.6, Corollary~3.7]{Wu2026}.

\begin{lem}
\label{lem:wu-kernel-criterion}
Let $\Psi:\D\to\D$ be an arbitrary map.
The following three properties hold simultaneously if and only if
conditions \textup{(K0)}--\textup{(K3)} below are satisfied:
\(
 \Lambda\in\Cseq
 \Longleftrightarrow
 \Psi(\Lambda)\in\Cseq\) for every indexed sequence \(\Lambda\) in \(\D\);
\(
 c(1-|z|^2)\leq1-|\Psi(z)|^2\leq C(1-|z|^2)\) for \(z\in\D\),
for some constants $0<c\leq C<\infty$; and
\(
 \lim_{r\uparrow1}\Psi(r\zeta)=\zeta\) for \(\zeta\in\T\).
\begin{enumerate}
\item[\textup{(K0)}]
$\Psi$ is injective.

\item[\textup{(K1)}]
For every sequence $(z_n)$ in $\D$,
\(
 |z_n|\longrightarrow1
 \Longleftrightarrow
 |\Psi(z_n)|\longrightarrow1.
\)

\item[\textup{(K2)}]
There exist constants $0<M<r_0<1$ such that
\(
 \rho(\Psi(z),z)\leq M\) whenever \(r_0\leq|z|<1\).

\item[\textup{(K3)}]
For every pair of sequences $(z_n)$ and $(w_n)$ in $\D$
satisfying $|z_n|\to1$ and $|w_n|\to1$,
\[
 \rho(z_n,w_n)\longrightarrow0
 \quad\Longleftrightarrow\quad
 \rho(\Psi(z_n),\Psi(w_n))\longrightarrow0.
\]
\end{enumerate}
\end{lem}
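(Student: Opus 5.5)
This lemma is quoted from \cite[Theorems~2.8 and~5.6, Corollary~3.7]{Wu2026}, so in the paper it is a recalled result. Still, a self-contained argument can be assembled from the tools above. The plan treats the two implications separately.

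\emph{Necessity.} Assume the three properties. (K0) follows from Proposition~\ref{prop:transport-injective-proper}. For (K1), the forward implication is also in Proposition~\ref{prop:transport-injective-proper}; the reverse implication follows from the lower bound $c(1-|z|^2)\leq1-|\Psi(z)|^2$.

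For (K3), the plan is a packet argument. Suppose $|z_n|,|w_n|\to1$, but one side of (K3) tends to zero while the other does not along a subsequence, so the other stays $\geq\varepsilon_0$.
\begin{itemize}
\item Since $|\Psi(z_n)|,|\Psi(w_n)|\to1$ as well, we may pass to a further subsequence. On it, both the pairs $\{z_n,w_n\}$ and their images satisfy the geometric decay hypothesis of Lemma~\ref{lem:transport-sparse-packets}.
\item The separated side is then an interpolating sequence, while the other side fails pairwise separation.
\item This contradicts the bidirectional preservation.
\end{itemize}

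For (K2), the plan is:
\begin{itemize}
\item Comparability of defects gives $|\,\log\delta(\Psi(z))-\log\delta(z)|\leq C'$.
\item The radial limit condition should control the argument. Combined with (K3), which gives uniform continuity near $\T$, I expect it to yield a hyperbolic displacement bound.
\item One possible route: derive a continuous extension to $\overline\D$ with identity boundary values, and apply the last assertion of Lemma~\ref{lem:uniform-boundary-rigidity} to get $\sup d_\D(\Psi(z),z)<\infty$.
\end{itemize}
This gives $\rho(\Psi(z),z)\leq M<1$. The radius $r_0>M$ can then be chosen freely, since $M$ does not depend on $r_0$.

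\emph{Sufficiency.} Assume (K0)--(K3).
\begin{itemize}
\item \emph{Defect comparability.} (K2) gives bounded hyperbolic displacement near $\T$, which yields $1-|\Psi(z)|^2\asymp1-|z|^2$ for $|z|\geq r_0$. For $|z|<r_0$, (K1) and injectivity keep $|\Psi(z)|$ bounded away from $1$ on the compact disk, so comparability extends there.
\item \emph{Radial limits.} The radial limit $\Psi(r\zeta)\to\zeta$ follows because $\rho(\Psi(r\zeta),r\zeta)\leq M$ forces Euclidean closeness $|\Psi(r\zeta)-r\zeta|\lesssim 1-r\to0$.
\item \emph{Separation.} Let $\Lambda\in\Cseq$. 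Only finitely many of its points lie in $|z|<r_0$, and by Lemma~\ref{lem:transport-finite-perturbation} we may delete them. Separation of the image near $\T$ then follows from (K3) by contradiction with sequences.
\item \emph{Carleson condition.} Using bounded displacement and comparable defects, each image point $\Psi(\lambda)$ lies in a box $S(I')$ with $|I'|\asymp|I|$ whenever $\lambda\in S(I)$. Hence $\mu_{\Psi(\Lambda)}(S(I))\lesssim\mu_\Lambda(S(CI))\lesssim|I|$.
\item \emph{Converse direction.} The converse implication uses the symmetric statements. Surjectivity onto the relevant part, or application to $\Lambda=\Psi^{-1}(\cdot)$ on the image, handles it; the estimates in (K2) and (K3) are symmetric.
\end{itemize}

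\textbf{Expected main obstacle.} The hardest step is necessity of (K2) for an arbitrary, possibly discontinuous map. There, continuity is unavailable, so the extension theory of Lemma~\ref{lem:uniform-boundary-rigidity} cannot be applied directly. I would instead argue by contradiction. Suppose points $z_n$ have $\rho(\Psi(z_n),z_n)\to1$ with comparable defects. Then the image point must drift tangentially by many box widths. Building from this a sparse interpolating sequence whose image clusters against the radial-limit constraint is where the real work of \cite{Wu2026} lies.
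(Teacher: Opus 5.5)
\textbf{Gap: necessity of (K2).} Your arguments for (K0), (K1) and (K3), and your plan for sufficiency, are sound. The necessity of (K2), which you flag yourself, is not proved. The route through Lemma~\ref{lem:uniform-boundary-rigidity} needs $\Psi$ to be a $\rho$-uniform homeomorphism of $\D$ (continuity, surjectivity, global control of $\Psi^{-1}$), and none of this is available for an arbitrary map. Your contradiction argument is only sketched. The paper merely recalls this lemma from earlier work, so there is no in-paper proof to lean on.

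\textbf{The missing idea.} No extension or rigidity theory is needed. (K2) follows from a telescoping along radii, the same argument that gives \eqref{eq:first-order-shadow-continuous} in Lemma~\ref{lem:defect-boundary-lipschitz}, with coarse uniform continuity near $\T$ replacing the quasi-isometry property.

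First make (K3) quantitative. Arguing by contradiction with sequences, there are $r_1<1$ and $\eta>0$ such that $|z|,|w|\geq r_1$ and $\rho(z,w)\leq\eta$ imply $\rho(\Psi(z),\Psi(w))\leq1/2$.

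For $z=(1-t)\zeta$ with $0<t\leq1-r_1$, put $z_k=(1-2^{-k}t)\zeta$. Then $\rho(z_k,z_{k+1})\leq1/2$. The radial segment from $z_k$ to $z_{k+1}$ lies in $\{|w|\geq r_1\}$ and splits into $N=N(\eta)$ pieces of $\rho$-length at most $\eta$. Hence $\rho(\Psi(z_k),\Psi(z_{k+1}))\leq M_1<1$, uniformly in $k,t,\zeta$. The upper defect bound then gives
\[
 |\Psi(z_k)-\Psi(z_{k+1})|
 \leq\frac{2M_1}{1-M_1}\,\delta(\Psi(z_k))
 \leq C'2^{-k}t .
\]
Summing, and using $\Psi(z_k)\to\zeta$ from the radial-limit property, yields $|\Psi(z)-\zeta|\leq2C't$. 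Hence $|\Psi(z)-z|\leq(2C'+1)t$.

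Finally, use the identity $|1-\overline uw|^2=(1-|u|^2)(1-|w|^2)+|u-w|^2$ together with the lower defect bound $(1-|z|^2)(1-|\Psi(z)|^2)\geq ct^2$. This gives
\[
 \rho(\Psi(z),z)^2\leq\frac{(2C'+1)^2}{c+(2C'+1)^2}<1
 \qquad(|z|\geq r_1),
\]
and choosing $r_0\in[r_1,1)$ above this bound gives (K2). All three hypotheses enter: (K3), both sides of the defect comparison, and the radial limit, which pins down the endpoint of the chain.

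\textbf{Two smaller points in sufficiency.} First, the box inclusion is stated in the wrong direction. To bound $\mu_{\Psi(\Lambda)}(S(I))$ you need $\Psi(\lambda)\in S(I)\Rightarrow\lambda\in S(CI)$ for $|\lambda|\geq r_0$. This holds because the displacement bound is symmetric in $\lambda$ and $\Psi(\lambda)$.

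Second, for the implication $\Psi(\Lambda)\in\Cseq\Rightarrow\Lambda\in\Cseq$, do not appeal to surjectivity, which is not assumed. Work with the pair $\Lambda,\Psi(\Lambda)$ directly:
\begin{itemize}
\item $\Lambda$ has no repetitions, because $\Psi(\Lambda)$ has none.
\item $\Lambda$ has only finitely many terms in each compact subdisk, by the implication $|\Psi(z_n)|\to1\Rightarrow|z_n|\to1$ in (K1).
\item Separation of $\Lambda$ follows from the implication $\rho(z_n,w_n)\to0\Rightarrow\rho(\Psi(z_n),\Psi(w_n))\to0$ in (K3).
\item The Carleson condition for $\Lambda$ follows from the same symmetric box inclusion and comparable defects.
\end{itemize}
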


\begin{samepage}
\begin{lem}
\label{lem:bounded-displacement-preserver}
Let $F:\D\to\D$ be a $\rho$-uniform homeomorphism such that
$\sup_{z\in\D}\rho(F(z),z)=M<1$. Then $F\in\Cbi$ and
\(
 1-|F(z)|\asymp1-|z|.
\)
\end{lem}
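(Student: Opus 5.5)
Bounded displacement immediately gives comparability of the boundary defects. Preservation of interpolating sequences then follows from Theorem~\ref{thm:transport-two-sided}, either by checking the kernel conditions directly or by invoking Lemma~\ref{lem:wu-kernel-criterion}.

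\emph{Step 1: defect comparability.} Since $\rho(F(z),z)\leq M<1$, the standard estimate
\[
 \frac{1-\rho}{1+\rho}\leq\frac{1-|w|^2}{1-|z|^2}\leq\frac{1+\rho}{1-\rho},
 \qquad \rho=\rho(z,w),
\]
applied with $w=F(z)$ gives $q_F\asymp1$ with constants depending only on $M$. Hence $1-|F(z)|\asymp1-|z|$. The same displacement bound holds for $F^{-1}$, because $\rho(F^{-1}(w),w)=\rho(z,F(z))\leq M$ with $z=F^{-1}(w)$, so $q_{F^{-1}}\asymp1$ as well.

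\emph{Step 2: bounded transport.} It suffices to show that $T_F$ is bounded on positive Carleson measures. The argument for $T_{F^{-1}}$ is symmetric. Since $q_F\leq C$, we have $T_F\mu\leq C\,F_*\mu$. I use the kernel test of Lemma~\ref{lem:transport-kernel-tests}:
\[
 \int\frac{1-|a|^2}{|1-\overline a w|^2}\,d(F_*\mu)(w)
 =\int\frac{1-|a|^2}{|1-\overline a F(z)|^2}\,d\mu(z).
\]
The needed ingredient is a Harnack-type comparison: if $\rho(z,w)\leq M$, then $|1-\overline a w|\asymp|1-\overline a z|$ uniformly in $a\in\D$. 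This follows from the identity
\[
 \frac{1-|a|^2}{|1-\overline a z|^2}=\frac{1-|\varphi_a(z)|^2}{1-|z|^2}
\]
together with Step~1 applied to the pair $\varphi_a(z),\varphi_a(w)$, which also have pseudohyperbolic distance at most $M$. Combining this with the comparison $1-|z|^2\asymp1-|w|^2$ bounds each kernel value at $F(z)$ by a constant times the kernel value at $z$. Therefore $\|T_F\mu\|_{\CM}\lesssim\|\mu\|_{\CM}$.

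\emph{Step 3: conclusion.} $F$ is a $\rho$-uniform homeomorphism by hypothesis, and both $T_F$ and $T_{F^{-1}}$ are bounded by Step~2. Theorem~\ref{thm:transport-two-sided} therefore gives $F\in\Cbi$.

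\emph{Alternative route.} One could instead verify (K0)--(K3) of Lemma~\ref{lem:wu-kernel-criterion}. Conditions (K0), (K1), (K2) and (K3) follow respectively from injectivity, the comparability in Step~1, the displacement bound, and two-sided uniform continuity. But that lemma also builds in radial limits equal to $\zeta$, which we do not have, so the transport route is cleaner.

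The only real obstacle is the uniform comparison $|1-\overline a F(z)|\asymp|1-\overline a z|$ in Step~2. It must hold uniformly in $a$, and I would handle it through the Möbius invariance of $\rho$ exactly as indicated there.
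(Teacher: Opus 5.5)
Your argument is correct, but it follows a different route from the paper. The paper checks conditions (K0)--(K3) of Lemma~\ref{lem:wu-kernel-criterion}: injectivity, properness via compact sets, the displacement bound with $M<M_1<r_0<1$, and two-sided uniform continuity. It then reads off both bidirectional preservation and $1-|F(z)|^2\asymp1-|z|^2$ from that criterion, which is cited from \cite{Wu2026}.

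You instead get defect comparability directly from the elementary Harnack-type inequality for $\rho$. You then prove boundedness of $T_F$ and $T_{F^{-1}}$ with the kernel test of Lemma~\ref{lem:transport-kernel-tests} and the identity $\frac{1-|a|^2}{|1-\overline a z|^2}=\frac{1-|\varphi_a(z)|^2}{1-|z|^2}$, combined with M\"obius invariance. As a result you need only the easy implication (ii)$\Rightarrow$(i) of Theorem~\ref{thm:transport-two-sided}. There is no circularity, since that implication rests only on Carleson's theorem and \eqref{eq:transport-canonical-identity}.

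Your route is self-contained within this paper and gives a transport bound with constants depending only on $M$. It also separates the roles of the hypotheses: the displacement bound alone controls the Carleson measure part, and uniform continuity is used only for separation. The paper's route is shorter, given the external criterion, and it places the lemma directly in the kernel framework of \cite{Wu2026}.

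One correction to your closing remark. Lemma~\ref{lem:wu-kernel-criterion} does not assume radial limits. It asserts that (K0)--(K3) imply all three properties together, including $\lim_{r\uparrow1}F(r\zeta)=\zeta$. That limit in fact holds here: $\rho(F(z),z)\le M$ gives $|F(z)-z|\le\frac{2M}{1-M}(1-|z|)$, as in the proof of Theorem~\ref{thm:explicit-continuous-kernel}. So the alternative route you sketched is not obstructed; it is exactly the paper's proof.
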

\end{samepage}

\begin{proof}
We verify conditions \textup{(K0)}--\textup{(K3)}
of Lemma~\ref{lem:wu-kernel-criterion}.

Since $F$ is a homeomorphism of $\D$, it is injective.
Moreover, both $F$ and $F^{-1}$ map compact subsets of $\D$
to compact subsets of $\D$. Consequently,
\(
 |z_n|\rightarrow1
 \Longleftrightarrow
 |F(z_n)|\rightarrow1,
\)
so \textup{(K0)} and \textup{(K1)} hold.

Choose constants $M<M_1<r_0<1$. Then
\(
 \rho(F(z),z)\leq M<M_1\) whenever \(|z|\geq r_0\),
which gives \textup{(K2)}.
Uniform continuity of $F$ and $F^{-1}$ with respect to $\rho$
gives
\(
 \rho(z_n,w_n)\rightarrow0
 \Longleftrightarrow
 \rho(F(z_n),F(w_n))\rightarrow0.
\)
In particular, \textup{(K3)} holds.

Lemma~\ref{lem:wu-kernel-criterion} now shows that $F$
preserves interpolating sequences in both directions and that
\(
 1-|F(z)|^2\asymp1-|z|^2.
\)
Since $F$ is continuous, $F\in\Cbi$.
Finally, the inequalities $1\leq1+|z|<2$ show that this
estimate is equivalent to \(
 1-|F(z)|\asymp1-|z|.
\)
\end{proof}

\begin{samepage}
\begin{lem}
\label{lem:qc-rho-uniform}
Let $G:\D\to\D$ be a $K$-quasiconformal homeomorphism,
where $K\geq1$. Then
\[
  \rho(G(z),G(w))
  \leq16\,\rho(z,w)^{1/K}
  \qquad (z,w\in\D).
\]
The same estimate holds for $G^{-1}$.
In particular, $G$ is a $\rho$-uniform homeomorphism.
\end{lem}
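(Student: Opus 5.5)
This is the quasiconformal Schwarz--Pick lemma, which is classical. The plan is to reduce to a normalized situation by Möbius invariance, invoke Mori's distortion theorem, and then transfer the estimate to the inverse.

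\emph{Normalization.} Fix $z,w\in\D$ and choose disk automorphisms $\varphi_z,\psi$ with $\varphi_z(0)=z$ and $\psi(G(z))=0$. Set $g=\psi\circ G\circ\varphi_z$. Then:
\begin{itemize}
\item $g$ is a $K$-quasiconformal self-homeomorphism of $\D$ with $g(0)=0$, since Möbius maps are conformal;
\item $\rho$ is invariant under automorphisms, so $\rho(z,w)=|\varphi_z^{-1}(w)|$ and $\rho(G(z),G(w))=|g(\varphi_z^{-1}(w))|$.
\end{itemize}
The claim therefore reduces to
\[
|g(x)|\le 16|x|^{1/K}\qquad(x\in\D)
\]
for every $K$-quasiconformal homeomorphism $g$ of $\D$ fixing $0$.

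\emph{Mori's theorem.} The reduced claim is a standard consequence of Mori's theorem. Mori gives $|g(x)-g(y)|\le 16|x-y|^{1/K}$ for all $x,y\in\D$ whenever $g$ is a $K$-quasiconformal self-map of $\D$ with $g(0)=0$ (see Ahlfors, \emph{Lectures on quasiconformal mappings}, which the paper already cites as \cite{Ahlfors2006}). Taking $y=0$ gives exactly the needed bound. If one prefers to avoid quoting the two-point version, the one-point bound $|g(x)|\le 16|x|^{1/K}$ follows from the Grötzsch ring modulus estimate in three steps:
\begin{itemize}
\item The annulus $\{|x|<|\zeta|<1\}$ is mapped onto a ring separating $\{0,g(x)\}$ from $\T$.
\item Its modulus is at least $K^{-1}\log(1/|x|)$ by quasi-invariance of modulus.
\item It is at most $\log(4/|g(x)|)$, by the Grötzsch extremal bound $\operatorname{mod}\le\mu(|g(x)|)<\log(4/|g(x)|)$.
\end{itemize}
Combining the last two gives $|g(x)|\le 4|x|^{1/K}\le16|x|^{1/K}$.

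\emph{Inverse and uniformity.} Since $G^{-1}$ is also $K$-quasiconformal, the same argument gives the estimate for $G^{-1}$. The bound $\rho(G(z),G(w))\le16\rho(z,w)^{1/K}$ tends to $0$ uniformly as $\rho(z,w)\to0$, so $G$ is uniformly continuous with respect to $\rho$. The same holds for $G^{-1}$, hence $G$ is a $\rho$-uniform homeomorphism.

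\emph{Orientation.} If quasiconformal maps that reverse orientation are allowed, compose with complex conjugation. This preserves $\rho$ and reduces to the orientation-preserving case.

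The only substantive step is the distortion inequality. I will cite it rather than reprove it, at most sketching the modulus argument above; everything else is bookkeeping with Möbius invariance.
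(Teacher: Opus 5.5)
Your proposal is correct and follows the paper's proof essentially verbatim: conjugate $G$ by disk automorphisms to obtain a $K$-quasiconformal self-map of $\D$ fixing $0$, apply Mori's bound $|g(x)|\le16|x|^{1/K}$, and repeat the argument for $G^{-1}$, which is also $K$-quasiconformal. Your optional ring-modulus derivation, comparing the lower bound $K^{-1}\log(1/|x|)$ for the modulus of the image of $\{|x|<|\zeta|<1\}$ with the Gr\"otzsch upper bound $\mu(|g(x)|)<\log(4/|g(x)|)$, is also sound and even gives the constant $4$ in place of $16$.
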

\end{samepage}

\begin{proof}
For $a\in\D$, let
\(
  \varphi_a(\zeta)=\frac{\zeta-a}{1-\overline a\zeta}.
\)
Fix $z\in\D$ and define
\(
  H_z=\varphi_{G(z)}\circ G\circ\varphi_z^{-1}.
\)
Then $H_z$ is a $K$-quasiconformal homeomorphism
of $\D$ satisfying $H_z(0)=0$.
Mori's theorem \cite[p.~157]{Mori1956} gives
\[
  |H_z(u)|\leq16\,|u|^{1/K}
  \qquad (u\in\D).
\]
Taking $u=\varphi_z(w)$, we obtain
\[
  \rho(G(z),G(w))
  =\bigl|H_z(\varphi_z(w))\bigr|
  \leq16\,|\varphi_z(w)|^{1/K}
   =16\,\rho(z,w)^{1/K}.
\]
Since $G^{-1}$ is also $K$-quasiconformal, the same
argument applies to $G^{-1}$.
These estimates imply uniform continuity of both maps
with respect to $\rho$.
\end{proof}

Comparison with a quasiconformal map having the same
boundary values now reduces bidirectional preservation
to strong quasisymmetry.

\begin{samepage}
\begin{thm}
\label{thm:continuous-bidirectional-boundary}
For a continuous map $\Phi:\D\to\D$, the following
conditions are equivalent:
\begin{enumerate}
\renewcommand{\labelenumi}{\textup{(\roman{enumi})}}
  \item $\Phi\in\Cbi$.
  \item $\Phi$ is a $\rho$-uniform homeomorphism of $\D$,
        and its boundary extension satisfies
        $h_\Phi\in\operatorname{SQS}^{\pm}(\T)$.
\end{enumerate}
\end{thm}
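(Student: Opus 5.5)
The plan is to compare $\Phi$ with a quasiconformal map that has the same boundary values, and then use the group structure of $\Cbi$ (Corollary~\ref{cor:transport-continuous-group}) together with the Astala--Zinsmeister and Gonz\'alez--Nicolau criterion. In both directions, Lemma~\ref{lem:uniform-boundary-rigidity} provides what we need about $\Phi$: assuming it is a $\rho$-uniform homeomorphism, it extends to a homeomorphism of $\overline\D$, and its boundary map $h_\Phi$ is quasisymmetric. In direction (i)$\Rightarrow$(ii), the $\rho$-uniformity comes from Lemma~\ref{lem:biuniform-from-preservation}.

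\emph{Common reduction.} Let $\Phi$ be a $\rho$-uniform homeomorphism. Since $h_\Phi$ is quasisymmetric, we take its Douady--Earle extension $G$. (The Beurling--Ahlfors extension, transferred to the disk, would also do; composing with conjugation handles the orientation-reversing case.) $G$ is a quasiconformal or antiquasiconformal homeomorphism of $\D$ whose boundary map is $h_G=h_\Phi$. By Lemma~\ref{lem:qc-rho-uniform}, $G$ is $\rho$-uniform, and this lemma applies to the antiquasiconformal case as well after composing with conjugation. Set $\Psi=G^{-1}\circ\Phi$. This is a $\rho$-uniform homeomorphism, because uniform continuity is preserved under composition. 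Its extension to $\overline\D$ is the composition of the extensions, so $h_\Psi=\operatorname{id}_\T$. The last assertion of Lemma~\ref{lem:uniform-boundary-rigidity} then gives $\sup_z d_\D(\Psi(z),z)<\infty$, which means $\sup_z\rho(\Psi(z),z)<1$. Lemma~\ref{lem:bounded-displacement-preserver} now yields $\Psi\in\Cbi$. Since $\Cbi$ is a group, $\Phi=G\circ\Psi$ lies in $\Cbi$ if and only if $G$ does.

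\emph{Both directions.} By the classical characterization quoted before Lemma~\ref{lem:biuniform-from-preservation}, the quasiconformal (or antiquasiconformal) map $G$ preserves interpolating sequences in both directions exactly when $h_G\in\operatorname{SQS}^{\pm}(\T)$. Preservation in one direction suffices here, by the group property of strongly quasisymmetric maps. Because $G$ is continuous, preservation in both directions is the same as $G\in\Cbi$. For (ii)$\Rightarrow$(i): $h_\Phi\in\operatorname{SQS}^\pm$ gives $G\in\Cbi$, and hence $\Phi\in\Cbi$. For (i)$\Rightarrow$(ii): $\Phi\in\Cbi$ gives $G=\Phi\circ\Psi^{-1}\in\Cbi$, so $h_\Phi=h_G\in\operatorname{SQS}^\pm(\T)$.

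\emph{Main obstacle.} I expect the delicate points to be bookkeeping rather than new estimates. The first is orientation. We should write $G=\overline{G_0}$ in the reversing case and apply the theorem of Gonz\'alez and Nicolau to $G_0$. This uses the fact that conjugation preserves $\Cseq$, since it preserves both $\rho$ and Carleson boxes. The second is the existence of a quasiconformal extension for a quasisymmetric circle map whose quasisymmetry is measured with respect to $d_\T$; this is the standard Beurling--Ahlfors or Douady--Earle theorem. The third is checking that the boundary extension of $G^{-1}\circ\Phi$ is indeed $h_G^{-1}\circ h_\Phi$, which follows from continuity on $\overline\D$.
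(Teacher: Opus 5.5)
Your proof is correct and follows the paper's argument. You compare $\Phi$ with a quasiconformal extension $G$ of $h_\Phi$, show that the factor with identity boundary values has bounded hyperbolic displacement (Lemma~\ref{lem:uniform-boundary-rigidity}) and therefore lies in $\Cbi$ (Lemma~\ref{lem:bounded-displacement-preserver}), and use the group property to reduce to the Astala--Zinsmeister/Gonz\'alez--Nicolau criterion for $G$. The differences are cosmetic: the paper uses $\Theta=\Phi\circ G^{-1}$ rather than $G^{-1}\circ\Phi$, uses the Beurling--Ahlfors extension, and handles orientation at the outset by replacing $\Phi$ with $J\circ\Phi$, $J(z)=\overline z$, instead of allowing $G$ to be antiquasiconformal.
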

\end{samepage}

\begin{proof}
By Lemma~\ref{lem:biuniform-from-preservation},
condition~(i) implies that $\Phi$ is a $\rho$-uniform
homeomorphism of $\D$.
Since this property is also part of condition~(ii),
it suffices to prove
\(
  \Phi\in\Cbi
  \Longleftrightarrow
  h_\Phi\in\operatorname{SQS}^{\pm}(\T)
\)
for every $\rho$-uniform homeomorphism $\Phi$ of $\D$.

Let $J(z)=\overline z$.
Since
\(
  \rho(J(z),J(w))=\rho(z,w),
  1-|J(z)|^2=1-|z|^2,
\)
complex conjugation preserves pairwise separation
and the canonical weights.
Moreover, for every arc $I\subset\T$,
\(
  J(S(I))=S(J(I)),
   |J(I)|=|I|,
\)
so it also preserves the Carleson measure condition.
Carleson's interpolation theorem therefore gives
\(
  \Lambda\in\Cseq
  \Longleftrightarrow
  J(\Lambda)\in\Cseq.
\)
Since $J$ is continuous, we conclude that $J\in\Cbi$.
Composition with $J$ preserves $\rho$-uniformity and,
by Corollary~\ref{cor:transport-continuous-group},
membership in $\Cbi$.
Moreover,
\(
  h_{J\circ\Phi}=J|_\T\circ h_\Phi,
\)
and composition with $J|_\T$ preserves membership in
$\operatorname{SQS}^{\pm}(\T)$.
Replacing $\Phi$ by $J\circ\Phi$ when necessary,
we may therefore assume that $\Phi$ preserves orientation.

By Lemma~\ref{lem:uniform-boundary-rigidity},
$\Phi$ extends to a homeomorphism of $\overline\D$,
and its boundary map $h=h_\Phi$ is quasisymmetric.
The Beurling--Ahlfors extension theorem
\cite{AB1956} provides a quasiconformal
homeomorphism $G$ of $\D$ with boundary map $h$.
By Lemma~\ref{lem:qc-rho-uniform}, $G$ is a
$\rho$-uniform homeomorphism.

Set
\(
  \Theta=\Phi\circ G^{-1}.
\)
Both $\Theta$ and
$\Theta^{-1}=G\circ\Phi^{-1}$ are uniformly continuous
with respect to $\rho$.
Furthermore, their continuous boundary extensions give
\(
  h_\Theta=h_\Phi\circ h_G^{-1}
           =h\circ h^{-1}
           =\operatorname{id}_\T.
\)
Lemma~\ref{lem:uniform-boundary-rigidity} therefore yields
\(
  D_\Theta:=
  \sup_{z\in\D}d_\D(\Theta(z),z)<\infty.
\)
Using $\rho=\tanh(d_\D/2)$, we obtain
\[
  \sup_{z\in\D}\rho(\Theta(z),z)
  \leq\tanh(D_\Theta/2)<1.
\]
Thus Lemma~\ref{lem:bounded-displacement-preserver}
applies and gives $\Theta\in\Cbi$.

Since $\Cbi$ is a group by
Corollary~\ref{cor:transport-continuous-group},
the identities
\(
  \Phi=\Theta\circ G,
  G=\Theta^{-1}\circ\Phi
\)
imply
\(
  \Phi\in\Cbi
  \Longleftrightarrow
  G\in\Cbi.
\)

It remains to characterize the latter condition.
If $G\in\Cbi$, then $G$ maps every interpolating
sequence to an interpolating sequence.
The quasiconformal characterization recalled above
\cite{AZ1991,GN1998} implies that $h$ is strongly
quasisymmetric.

Conversely, suppose that $h$ is strongly quasisymmetric.
Since strongly quasisymmetric homeomorphisms form a group,
$h^{-1}$ is also strongly quasisymmetric.
Applying the same characterization separately to $G$
and $G^{-1}$ shows that both maps preserve interpolating
sequences.
Hence $G\in\Cbi$.

We have therefore proved, for maps that preserve orientation,
that $\Phi\in\Cbi$ if and only if $h_\Phi$ is strongly
quasisymmetric.
The reduction by complex conjugation gives the stated
equivalence for both orientations.
\end{proof}

The preceding comparison also identifies the additional
metric condition that turns preservation in one direction
into preservation in both directions.

\begin{cor}
\label{cor:automatic-bidirectional-preservation}
Let $\Phi\in\Cplus$. Then
 \(\Phi\in\Cbi\) if and only if
 \(\Phi\) is uniformly continuous with respect to \(\rho\).
\end{cor}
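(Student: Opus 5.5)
The forward implication is immediate. If $\Phi\in\Cbi$, then Lemma~\ref{lem:biuniform-from-preservation} shows that $\Phi$ is a $\rho$-uniform homeomorphism. In particular, $\Phi$ itself is uniformly continuous with respect to $\rho$.

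For the converse, let $\Phi\in\Cplus$ be uniformly continuous with respect to $\rho$. I would first apply Theorem~\ref{thm:transport-one-sided} to $\Phi$. It shows that $\Phi$ is a homeomorphism of $\D$ onto itself and that $\Phi^{-1}$ is uniformly continuous with respect to $\rho$. Together with the assumption on $\Phi$, this makes $\Phi$ a $\rho$-uniform homeomorphism. By Theorem~\ref{thm:continuous-bidirectional-boundary}, it then suffices to show that $h_\Phi\in\operatorname{SQS}^{\pm}(\T)$.

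To identify $h_\Phi$, I would rerun the comparison from the proof of Theorem~\ref{thm:continuous-bidirectional-boundary}. After composing with $J$ if necessary, assume $\Phi$ preserves orientation. Composing with $J$ preserves membership in $\Cplus$, since $J\in\Cbi$ and $(J\circ\Phi)(\Lambda)=J(\Phi(\Lambda))$. Lemma~\ref{lem:uniform-boundary-rigidity} shows that $h=h_\Phi$ is quasisymmetric. Let $G$ be the Beurling--Ahlfors extension of $h$ and set $\Theta=\Phi\circ G^{-1}$. As in that proof, $\Theta$ is a $\rho$-uniform homeomorphism with identity boundary values and bounded displacement. Lemma~\ref{lem:bounded-displacement-preserver} therefore gives $\Theta\in\Cbi$, and hence $\Theta^{-1}\in\Cbi$ by Corollary~\ref{cor:transport-continuous-group}.

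The key step is to write $G=\Theta^{-1}\circ\Phi$. If $\Lambda\in\Cseq$, then $\Phi(\Lambda)\in\Cseq$ because $\Phi\in\Cplus$, and so $G(\Lambda)=\Theta^{-1}(\Phi(\Lambda))\in\Cseq$. Thus the quasiconformal map $G$ preserves interpolating sequences in one direction. The Astala--Zinsmeister and Gonz\'alez--Nicolau characterization \cite{AZ1991,GN1998}, recalled before Lemma~\ref{lem:biuniform-from-preservation}, then gives that $h$ is strongly quasisymmetric. That characterization requires only one-directional preservation, so this is the point where $\Phi\in\Cplus$ is actually used. Theorem~\ref{thm:continuous-bidirectional-boundary} now yields $\Phi\in\Cbi$. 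Undoing the conjugation by $J$ completes the argument.

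The main obstacle is only bookkeeping: checking that composing on the left with an element of $\Cbi$ keeps a map in $\Cplus$. This follows directly from the definitions, so no genuinely new estimate is needed.
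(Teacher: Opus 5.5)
Your proposal is correct and follows the paper's proof essentially step for step. You pass to an orientation-preserving $\rho$-uniform homeomorphism via Theorem~\ref{thm:transport-one-sided} and conjugation by $J$, factor $\Phi=\Theta\circ G$ with a Beurling--Ahlfors extension $G$ and a bounded-displacement factor $\Theta\in\Cbi$, and apply the one-directional quasiconformal characterization to $G=\Theta^{-1}\circ\Phi$ before invoking Theorem~\ref{thm:continuous-bidirectional-boundary}. The only cosmetic difference is that you cite Lemma~\ref{lem:biuniform-from-preservation} for necessity, whereas the paper quotes Theorem~\ref{thm:transport-two-sided} directly; that lemma is itself an immediate consequence of that theorem.
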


\begin{proof}
Necessity follows from Theorem~\ref{thm:transport-two-sided}.
For sufficiency, assume that $\Phi\in\Cplus$ is uniformly
continuous with respect to $\rho$.
Theorem~\ref{thm:transport-one-sided} shows that $\Phi$ is a
homeomorphism of $\D$ onto itself and that $\Phi^{-1}$ is
uniformly continuous with respect to $\rho$.
Thus $\Phi$ is a $\rho$-uniform homeomorphism.
Since complex conjugation belongs to $\Cbi$ and is an
isometry for $\rho$, we may assume that $\Phi$ preserves
orientation.

By Lemma~\ref{lem:uniform-boundary-rigidity}, the boundary
map $h=h_\Phi$ is quasisymmetric. Choose a quasiconformal
homeomorphism $G:\D\to\D$ with boundary map $h$, using
\cite{AB1956}, and set $\Theta=\Phi\circ G^{-1}$.
Lemma~\ref{lem:qc-rho-uniform} shows that $\Theta$ is a
$\rho$-uniform homeomorphism with identity boundary values.
Lemmas~\ref{lem:uniform-boundary-rigidity}
and~\ref{lem:bounded-displacement-preserver} therefore give
$\Theta\in\Cbi$.
Since $\Theta^{-1}\in\Cbi$ and $\Phi\in\Cplus$, the identity
$G=\Theta^{-1}\circ\Phi$ shows that $G$ maps every
interpolating sequence to an interpolating sequence.
The quasiconformal characterization \cite{AZ1991,GN1998}
implies that $h$ is strongly quasisymmetric.
Theorem~\ref{thm:continuous-bidirectional-boundary} now yields
$\Phi\in\Cbi$.
\end{proof}

The preceding results also give a boundary characterization of
weighted Carleson measure preservation within the class of
$\rho$-uniform homeomorphisms.

\begin{cor}
\label{cor:weighted-carleson-boundary}
Let $\Phi:\D\to\D$ be a $\rho$-uniform homeomorphism, and let
$h_\Phi:\T\to\T$ be its boundary map. The following conditions
are equivalent:
\begin{enumerate}
\renewcommand{\labelenumi}{\textup{(\roman{enumi})}}
\item $T_\Phi\mu\in\CM$ for every positive Carleson measure $\mu$.
\item There exists a constant $C<\infty$ such that
\(
  \|T_\Phi\mu\|_{\CM}\le C\|\mu\|_{\CM}
\)
for every positive Carleson measure $\mu$.
\item Both $T_\Phi$ and $T_{\Phi^{-1}}$ act boundedly on the
cone of positive Carleson measures.
\item $h_\Phi\in\operatorname{SQS}^{\pm}(\T)$.
\end{enumerate}
\end{cor}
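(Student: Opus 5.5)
The plan is to prove the cycle (ii)$\Rightarrow$(i)$\Rightarrow$(ii), then (iii)$\Rightarrow$(ii), then (ii)$\Rightarrow$(iv)$\Rightarrow$(iii), relying on Theorems~\ref{thm:transport-one-sided}, \ref{thm:transport-two-sided} and~\ref{thm:continuous-bidirectional-boundary} together with Corollary~\ref{cor:automatic-bidirectional-preservation}.

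The implications (ii)$\Rightarrow$(i) and (iii)$\Rightarrow$(ii) hold by definition. For (i)$\Rightarrow$(ii), I will reuse the positivity argument that closes the proof of Theorem~\ref{thm:transport-one-sided}. Suppose no uniform bound exists. Then there are measures $\sigma_m$ with $\|\sigma_m\|_{\CM}\le1$ and $\|T_\Phi\sigma_m\|_{\CM}>4^m$. Set $\sigma=\sum_m2^{-m}\sigma_m$; this is a positive Carleson measure. Monotone convergence gives $T_\Phi\sigma\ge 2^{-m}T_\Phi\sigma_m$ for every $m$, so $T_\Phi\sigma\notin\CM$, which contradicts (i). Homogeneity of $T_\Phi$ then yields (ii).

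For (ii)$\Rightarrow$(iv), I will combine (ii) with the standing hypothesis that $\Phi^{-1}$ is uniformly continuous for $\rho$ and with the fact that $\Phi$ is a homeomorphism. These are exactly condition (ii) of Theorem~\ref{thm:transport-one-sided}, so $\Phi\in\Cplus$. Since $\Phi$ is also uniformly continuous for $\rho$, Corollary~\ref{cor:automatic-bidirectional-preservation} gives $\Phi\in\Cbi$. Theorem~\ref{thm:continuous-bidirectional-boundary} then gives $h_\Phi\in\operatorname{SQS}^{\pm}(\T)$.

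For (iv)$\Rightarrow$(iii), I will apply Theorem~\ref{thm:continuous-bidirectional-boundary} to the $\rho$-uniform homeomorphism $\Phi$ with $h_\Phi\in\operatorname{SQS}^{\pm}(\T)$, which gives $\Phi\in\Cbi$. Theorem~\ref{thm:transport-two-sided} then says that both $T_\Phi$ and $T_{\Phi^{-1}}$ act boundedly on the cone of positive Carleson measures, which is (iii).

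I do not expect a real obstacle, since the hard analytic work is already contained in Corollary~\ref{cor:automatic-bidirectional-preservation} and Theorem~\ref{thm:continuous-bidirectional-boundary}. The one step that needs care is (i)$\Rightarrow$(ii). There I must check that the positivity argument uses only the qualitative statement (i) for the single measure $\sigma$, and not the sequence-preservation hypothesis of Theorem~\ref{thm:transport-one-sided}. I must also check that $T_\Phi$ commutes with countable positive sums, which follows from monotone convergence.
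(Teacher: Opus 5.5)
Your proposal is correct and matches the paper's proof step for step. Both use the positivity/summation argument for (i)$\Rightarrow$(ii), the chain $\Phi\in\Cplus\Rightarrow\Phi\in\Cbi\Rightarrow h_\Phi\in\operatorname{SQS}^{\pm}(\T)$ via Theorem~\ref{thm:transport-one-sided}, Corollary~\ref{cor:automatic-bidirectional-preservation} and Theorem~\ref{thm:continuous-bidirectional-boundary} for (ii)$\Rightarrow$(iv), and Theorems~\ref{thm:continuous-bidirectional-boundary} and~\ref{thm:transport-two-sided} for (iv)$\Rightarrow$(iii).
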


\begin{proof}
The implications
\textup{(iii)}$\Rightarrow$\textup{(ii)}$\Rightarrow$\textup{(i)}
are immediate.

To prove \textup{(i)}$\Rightarrow$\textup{(ii)}, suppose that
\textup{(i)} holds but no uniform bound exists. Choose positive
Carleson measures $\mu_n$ such that
\(
  \|\mu_n\|_{\CM}\le1,
  \|T_\Phi\mu_n\|_{\CM}>4^n.
\)
Then $\mu=\sum_{n\ge1}2^{-n}\mu_n$ is a positive Carleson measure.
Positivity and monotone convergence give
\(
  T_\Phi\mu\ge2^{-n}T_\Phi\mu_n,
\)
so $\|T_\Phi\mu\|_{\CM}>2^n$ for every $n$, contradicting
\textup{(i)}.

Assume \textup{(ii)}. Since $\Phi$ is a $\rho$-uniform
homeomorphism, Theorem~\ref{thm:transport-one-sided} gives
$\Phi\in\Cplus$. The uniform continuity of $\Phi$ with respect
to $\rho$, together with
Corollary~\ref{cor:automatic-bidirectional-preservation},
then yields $\Phi\in\Cbi$.
Theorem~\ref{thm:continuous-bidirectional-boundary} proves
\textup{(iv)}.

Conversely, \textup{(iv)} and
Theorem~\ref{thm:continuous-bidirectional-boundary} imply
$\Phi\in\Cbi$. Theorem~\ref{thm:transport-two-sided} therefore
gives \textup{(iii)}.
\end{proof}

Our characterization of continuous preservers contains the classical
quasiconformal criterion as a special case. Indeed, quasiconformal
disk homeomorphisms are automatically $\rho$-uniform by
Lemma~\ref{lem:qc-rho-uniform}, so the metric condition in
Theorem~\ref{thm:continuous-bidirectional-boundary} is automatic
in this setting. The following corollary therefore recovers the
classical characterization, whose sufficiency was established by
Astala--Zinsmeister \cite{AZ1991} and whose necessity was proved by
Gonz\'alez--Nicolau \cite{GN1998}, and includes both orientations.

\begin{samepage}
\begin{cor}
\label{cor:classical-qc-specialization}
Let $\Phi:\D\to\D$ be a quasiconformal or antiquasiconformal
homeomorphism, and let $h_\Phi:\T\to\T$ be its boundary map.
The following conditions are equivalent:
\begin{enumerate}
\renewcommand{\labelenumi}{\textup{(\roman{enumi})}}
\item $\Phi\in\Cplus$.
\item $\Phi\in\Cbi$.
\item $h_\Phi\in\operatorname{SQS}^{\pm}(\T)$.
\end{enumerate}
\end{cor}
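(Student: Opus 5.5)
The plan is to prove the cycle \textup{(ii)}$\Rightarrow$\textup{(i)}$\Rightarrow$\textup{(iii)}$\Rightarrow$\textup{(ii)}. The only input beyond the preceding theorems is Lemma~\ref{lem:qc-rho-uniform}. Applied to $\Phi$, or to $J\circ\Phi$ with $J(z)=\overline z$ in the antiquasiconformal case, it shows that $\Phi$ is a $\rho$-uniform homeomorphism of $\D$. Since $J$ is a $\rho$-isometry, uniform continuity of $J\circ\Phi$ and of its inverse transfers to $\Phi$ and $\Phi^{-1}$. In particular, Lemma~\ref{lem:uniform-boundary-rigidity} supplies the continuous boundary extension, so the boundary map $h_\Phi$ appearing in \textup{(iii)} is well defined.

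The implication \textup{(ii)}$\Rightarrow$\textup{(i)} holds by definition, since $\Cbi\subset\Cplus$. For \textup{(i)}$\Rightarrow$\textup{(ii)}, I would invoke Corollary~\ref{cor:automatic-bidirectional-preservation}: a map in $\Cplus$ lies in $\Cbi$ exactly when it is uniformly continuous in $\rho$, and this was just established. Alternatively, one could argue directly with the Astala--Zinsmeister/Gonz\'alez--Nicolau criterion recalled before Lemma~\ref{lem:biuniform-from-preservation}, which already states that for quasiconformal maps, preservation in one direction forces strong quasisymmetry and hence preservation in both directions. The corollary route is cleaner, because it covers both orientations at once.

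For \textup{(ii)}$\Leftrightarrow$\textup{(iii)}, Theorem~\ref{thm:continuous-bidirectional-boundary} applies directly. Its condition \textup{(ii)} asks that $\Phi$ be a $\rho$-uniform homeomorphism with $h_\Phi\in\operatorname{SQS}^{\pm}(\T)$, and the first part is automatic here. So membership in $\Cbi$ is equivalent to $h_\Phi\in\operatorname{SQS}^{\pm}(\T)$.

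There is no real obstacle. The only points needing care are the antiquasiconformal case, which the conjugation argument above handles, and making sure the boundary map in \textup{(iii)} is the extension given by Lemma~\ref{lem:uniform-boundary-rigidity}. That extension coincides with the usual quasiconformal boundary extension by uniqueness, since $\D$ is dense in $\overline\D$.
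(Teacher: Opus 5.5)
Your proposal is correct and follows essentially the same route as the paper: $\rho$-uniformity from Lemma~\ref{lem:qc-rho-uniform}, then Theorem~\ref{thm:continuous-bidirectional-boundary} for \textup{(ii)}$\Leftrightarrow$\textup{(iii)} and Corollary~\ref{cor:automatic-bidirectional-preservation} for \textup{(i)}$\Rightarrow$\textup{(ii)}. The only difference is cosmetic: the paper first reduces to the orientation-preserving case by composing with $J$, whereas you transfer $\rho$-uniformity through the isometry $J$ directly, which works equally well because the cited results already cover both orientations.
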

\end{samepage}

\begin{proof}
Let $J(z)=\overline z$.
Complex conjugation preserves Carleson sequences in both directions,
and composition with $J|_{\T}$ preserves membership in
$\operatorname{SQS}^{\pm}(\T)$.
Thus each of \textup{(i)}--\textup{(iii)} is unchanged when
$\Phi$ is replaced by $J\circ\Phi$.
We may therefore assume that $\Phi$ preserves orientation.

By Lemma~\ref{lem:qc-rho-uniform}, both $\Phi$ and $\Phi^{-1}$
are uniformly continuous with respect to $\rho$.
Theorem~\ref{thm:continuous-bidirectional-boundary} consequently
gives the equivalence of \textup{(ii)} and \textup{(iii)}.
The implication \textup{(ii)}$\Rightarrow$\textup{(i)} is immediate.
Finally, if \textup{(i)} holds, the uniform continuity of $\Phi$
and Corollary~\ref{cor:automatic-bidirectional-preservation}
yield \textup{(ii)}.
\end{proof}

The next example shows that the continuous classification also
includes orientation-preserving maps that are not quasiconformal,
even when they are $C^1$ on the closed disk.

\begin{example}[A non-quasiconformal bidirectional preserver]
\label{ex:non-qc-bidirectional-preserver}
Define $f:[0,1]\to[0,1]$ by
\[
 f(r)=
 \begin{cases}
  0, & r=0,\\[2pt]
  \displaystyle\frac12\exp\!\left(1-\frac{1}{2r}\right),
     & 0<r\leq\frac12,\\[4pt]
  r, & \frac12<r\leq1,
 \end{cases}
\]
and set $F(0)=0$ and $F(re^{it})=f(r)e^{it}$ for $r>0$.
Then $F$ is an orientation-preserving $C^1$ homeomorphism of
$\overline\D$, its restriction to $\D$ belongs to $\Cbi$,
and $F|_{\D}$ is not quasiconformal.

Indeed, $f$ is a strictly increasing continuous bijection of
$[0,1]$ onto itself, and $F(z)=z$ for $|z|\geq1/2$.
For $0<r<1/2$,
\(
 f'(r)=\frac{f(r)}{2r^2},
 \lim_{r\downarrow0}\frac{f(r)}r=0,
 \lim_{r\downarrow0}f'(r)=0.
\)
Consequently, $DF(0)=0$ and $DF$ is continuous at the origin.
At $r=1/2$, the inner and outer values of $f$ and $f'$ agree:
they are $1/2$ and $1$, respectively.
Thus $F\in C^1(\overline\D)$.

Every Carleson sequence has only finitely many terms in
$\{z:|z|\leq1/2\}$.
Hence $F$ changes only finitely many terms of any such sequence,
and its injectivity prevents repetitions.
Lemma~\ref{lem:transport-finite-perturbation} therefore shows that
$F$ preserves Carleson sequences.
The inverse $F^{-1}$ is also the identity for $|z|\geq1/2$,
so the same argument applies to $F^{-1}$.
It follows that $F|_{\D}\in\Cbi$.

Finally, at $z=re^{it}$ with $0<r<1/2$, the radial and tangential
principal stretches of $F$ are $f'(r)$ and $f(r)/r$, respectively.
Their ratio is
\(
 \frac{f'(r)}{f(r)/r}=\frac{1}{2r}.
\)
This ratio has infinite essential supremum on $\D$,
whereas quasiconformality requires it to be essentially bounded.
Thus $F|_{\D}$ is not quasiconformal.
\end{example}

\begin{remark}
\label{rem:sqs-not-split}
The boundary correspondence
\(
  \partial:\Cbi\rightarrow\operatorname{SQS}^{\pm}(\T),
   \partial\Phi=h_\Phi,
\)
is a group homomorphism, since boundary extensions
respect composition.
It is surjective by
Theorem~\ref{thm:continuous-bidirectional-boundary}
and the Beurling--Ahlfors extension theorem \cite{AB1956},
where maps that reverse orientation are handled by composition with complex conjugation.
Thus there is an exact sequence
\[
  1\longrightarrow\ker\partial
  \longrightarrow\Cbi
  \overset{\partial}{\longrightarrow}
  \operatorname{SQS}^{\pm}(\T)
  \longrightarrow1.
\]

For each $h\in\operatorname{SQS}^{\pm}(\T)$,
fix an extension $G_h\in\Cbi$ obtained in this way.
Every $\Phi\in\Cbi$ then admits the factorization
\(
  \Phi=\Psi\circ G_{h_\Phi},
  \Psi=\Phi\circ G_{h_\Phi}^{-1}\in\ker\partial.
\)
Relative to the fixed family $(G_h)$, this factorization
is unique: the boundary map determines $h=h_\Phi$,
and then $\Psi=\Phi\circ G_h^{-1}$.

The chosen extensions, however, need not satisfy
\(
  G_{h_1\circ h_2}=G_{h_1}\circ G_{h_2}.
\)
Consequently, this construction alone does not yield
a semidirect product decomposition.
The bi-Lipschitz subclass studied in
Section~\ref{sec:regularity} admits an explicit
homomorphic section.
Related extension constructions compatible with composition were studied by
Ibragimov~\cite{Ibragimov2010}, who obtained a homomorphic extension
from quasisymmetric mappings of the real line to quasi-isometries of the
upper half plane.
\end{remark}

\section{Canonical factorization and regularity of the kernel}
\label{sec:regularity}

In this section, we refine the boundary characterization in
Theorem~\ref{thm:continuous-bidirectional-boundary} by imposing
comparability of boundary defects. Within $\Cbi$, this condition
is equivalent to bi-Lipschitz regularity of the boundary map.
We characterize the kernel of the boundary homomorphism and
describe the associated canonical factorization. This provides
the framework for uniform boundary expansions of first order in
Section~\ref{subsec:boundary-first-order} and for quasiconformal
mappings in Section~\ref{subsec:qc-kernel}.

Recall that $\delta(z)=1-|z|$, and consider the class
\[
  \Pc=\left\{
    \Phi\in\Cbi:
    1-|\Phi(z)|^2\asymp1-|z|^2
    \text{ for }z\in\D
  \right\}.
\]
Here the comparison constants may depend on $\Phi$
but are independent of $z$.
Since $1\leq1+|z|<2$, the defining estimate is equivalent to
\(
  \delta(\Phi(z))\asymp\delta(z).
\)
The following lemma establishes the equivalence between
boundary defect comparability and bi-Lipschitz boundary
regularity for arbitrary $\rho$-uniform homeomorphisms of $\D$.
\begin{samepage}
\begin{lem}
\label{lem:defect-boundary-lipschitz}
Let $F$ be a $\rho$-uniform homeomorphism of $\D$,
and let $h_F$ be its boundary map. Then
\(
  \delta(F(z))\asymp\delta(z)\) if and only if
  \(h_F\in\operatorname{BiLip}(\T).
\)
\end{lem}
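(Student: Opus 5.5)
The plan is to compare $F$ with the radial extension $E_h$ of its boundary map $h=h_F$, where $E_h(r\zeta)=rh(\zeta)$ and $E_h(0)=0$. We also use the quasi-isometry and bounded-displacement facts from Lemma~\ref{lem:uniform-boundary-rigidity}. Both directions hinge on one geometric picture. For $z=r\zeta$ with $\delta(z)=t$ small, the hyperbolic ball of fixed radius around $z$ corresponds to the box $S(I)$ with $|I|\asymp t$ centred at $\zeta$. Under the quasi-isometry $F$, the image $F(z)$ should lie within bounded hyperbolic distance of the "top point" of the box over $h(I)$, that is, of the point $(1-|h(I)|)\,h(\zeta)$. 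Granting this, $\delta(F(z))\asymp|h(I)|$ whenever $|I|\asymp\delta(z)$.

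To justify the picture, I would argue via geodesics. Take the hyperbolic geodesic $\gamma$ through $z$ whose endpoints are the endpoints $\zeta_\pm$ of the arc $I$ with $|I|=\delta(z)$, so that $z$ is within bounded distance of the top of $\gamma$. By the Morse lemma, $F(\gamma)$ lies within bounded distance of the geodesic $\gamma'$ joining $h(\zeta_\pm)$. The geodesic ray from $0$ through $z$ towards $\zeta$ is mapped near the geodesic ray from $F(0)$ towards $h(\zeta)$. The point $z$ is, up to bounded error, the intersection of $\gamma$ with this radial ray. Its image is therefore within bounded distance of the intersection of $\gamma'$ with the ray to $h(\zeta)$. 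Quasisymmetry of $h$ (Lemma~\ref{lem:uniform-boundary-rigidity}) shows that $h(\zeta)$ sits well inside $h(I)$, with both subarcs comparable. Hence this intersection point is within bounded distance of the top of $\gamma'$, whose defect is $\asymp|h(I)|$. Bounded hyperbolic distance gives comparable defects.

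With $\delta(F(z))\asymp|h(I(z))|$ in hand, where $I(z)$ is the arc centred at $z/|z|$ of length $\delta(z)$, the proof splits into two directions.

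\emph{($\Leftarrow$)} If $h$ is $L$-bi-Lipschitz, then $|h(I)|\asymp|I|=\delta(z)$, so $\delta(F(z))\asymp\delta(z)$ for $\delta(z)$ small. On the compact set $|z|\le r_0$, both defects are bounded above and below by continuity and properness.

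\emph{($\Rightarrow$)} Suppose $\delta(F(z))\asymp\delta(z)$. Then $|h(I)|\asymp|I|$ for every arc with $|I|$ small, by choosing $z$ at the centre of $I$ with $\delta(z)=|I|$. For $\zeta,\eta$ close, apply this to the arc between them, of length $d_\T(\zeta,\eta)$. This gives $d_\T(h\zeta,h\eta)\asymp d_\T(\zeta,\eta)$ at small scales. Large scales are handled by compactness, since $h$ is a homeomorphism and the distance ratio is bounded above and below away from the diagonal.

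The main obstacle is the shadow estimate $\delta(F(z))\asymp|h(I(z))|$ with constants uniform in $z$. To get it, I would first try to normalise by post-composing with a rotation so that $F(0)$ is handled by a disk automorphism. Any Möbius normalisation only distorts defects by bounded factors, since $F(0)$ is fixed. The Morse lemma constants are then controlled by the quasi-isometry constants. An alternative is to quote the standard fact that quasi-isometries map shadows to comparable shadows (as in \cite{Vaisala2005}), which gives the same estimate without constructing the geodesics by hand.
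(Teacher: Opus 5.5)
Your argument is correct in outline, but it is organized differently from the paper's proof.

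\textbf{The paper's route.} The paper treats the two directions asymmetrically.
\begin{itemize}
\item For ($\Rightarrow$) it uses no geodesic geometry. Along the points $(1-2^{-n}t)\zeta$, the quasi-isometry bounds consecutive image steps in $\rho$. The upper defect bound turns these into Euclidean steps of size $\lesssim 2^{-n}t$, and summing gives $|F((1-t)\zeta)-h_F(\zeta)|\le Ct$. A triangle inequality through $(1-s)\zeta$ and $(1-s)\eta$, with $s=d_\T(\zeta,\eta)$, then makes $h_F$ Lipschitz, and applying the same argument to $F^{-1}$ handles $h_F^{-1}$.
\item For ($\Leftarrow$) it factors $F=\Theta\circ E_h$ through the modulus-preserving radial extension. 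It obtains bounded displacement of $\Theta$ from the rigidity part of Lemma~\ref{lem:uniform-boundary-rigidity}, and concludes with Lemma~\ref{lem:bounded-displacement-preserver}.
\end{itemize}
This gives an elementary forward direction and a converse that reuses machinery already in place. The cost is citing the rigidity theorem and the $\rho$-bi-Lipschitz property of $E_h$.

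\textbf{Your route.} Your shadow estimate $\delta(F(z))\asymp|h_F(I(z))|$ is a stronger statement. It holds for every $\rho$-uniform homeomorphism, with no hypothesis on defects. It yields both directions at once, even in one-sided form: the upper defect bound corresponds to $h_F$ being Lipschitz, and the lower bound to $h_F^{-1}$ being Lipschitz. It needs neither $E_h$ nor boundary rigidity.

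Its cost is that the Morse-lemma geometry must be carried out with uniform constants.
\begin{itemize}
\item Your step ``its image is therefore within bounded distance of the intersection'' already requires quasisymmetry, not only the step after it. The $R$-neighbourhoods of $\gamma'$ and of the ray towards $h_F(\zeta)$ meet in a set of bounded diameter only because these curves cross at an angle bounded below. That is exactly what comparability of $|h_F(I_1)|$ and $|h_F(I_2)|$, for the two halves $I_1,I_2$ of $I$, provides.
\item You should normalize $F(0)=0$ and restrict to small arcs. Uniform continuity of $h_F$ makes $|h_F(I)|$ small whenever $|I|$ is, so the ray really crosses $\gamma'$.
\end{itemize}
With these points in place the details are routine, and the remaining scales follow by compactness as you say.
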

\end{samepage}

\begin{proof}
Suppose first that there exist constants $a,b>0$ such that
\(
  a\delta(z)\leq\delta(F(z))\leq b\delta(z)\) for \(z\in\D\).
By Lemma~\ref{lem:uniform-boundary-rigidity},
$F$ is a quasi-isometry for $d_\D$.
Consequently, a uniform bound on the hyperbolic distance
between two points gives a uniform bound on the
hyperbolic distance between their images.

Fix $\zeta\in\T$ and $0<t<1/2$, and set
\(
  z_n=(1-2^{-n}t)\zeta\) for \(n\geq0\).
The hyperbolic distances $d_\D(z_n,z_{n+1})$ are uniformly
bounded. Hence there exists $M<1$, independent of
$n$, $t$, and $\zeta$, such that
\(
  \rho(F(z_n),F(z_{n+1}))\leq M.
\)
For any $u,v\in\D$ with $\rho(u,v)\leq M$, the identity
\(
  1-\overline u v
  =1-|u|^2+\overline u(u-v)
\)
gives
\[
  |u-v|
  \leq \frac{M(1-|u|^2)}{1-M|u|}
  \leq \frac{2M}{1-M}\,\delta(u).
\]
Applying this estimate to $F(z_n)$ and $F(z_{n+1})$,
we obtain
\[
  |F(z_n)-F(z_{n+1})|
  \leq \frac{2M}{1-M}\,\delta(F(z_n))
  \leq \frac{2Mb}{1-M}\,2^{-n}t.
\]
Summing these estimates and using
$F(z_n)\to h_F(\zeta)$ yields
\begin{equation}
\label{eq:first-order-shadow-continuous}
  |F((1-t)\zeta)-h_F(\zeta)|\leq Ct,
  \qquad \zeta\in\T,\quad 0<t<1/2,
\end{equation}
where $C$ is independent of $t$ and $\zeta$.

Now let $\zeta,\eta\in\T$ satisfy
\(
  0<s:=d_\T(\zeta,\eta)<1/2,
\)
and put $z=(1-s)\zeta$ and $w=(1-s)\eta$.
The formula for $\rho$ in polar coordinates gives a uniform
upper bound on $d_\D(z,w)$.
The quasi-isometry property of $F$ and the preceding
Euclidean estimate therefore imply
\(
  |F(z)-F(w)|\leq C_1s,
\)
where $C_1$ is independent of $\zeta$ and $\eta$.
Together with \eqref{eq:first-order-shadow-continuous},
this gives
\[
  |h_F(\zeta)-h_F(\eta)|
  \leq |h_F(\zeta)-F(z)|
       +|F(z)-F(w)|
       +|F(w)-h_F(\eta)|
  \leq (2C+C_1)s.
\]
The estimate extends to antipodal pairs by continuity.
Since the chordal metric and $d_\T$ are bi-Lipschitz
equivalent, $h_F$ is Lipschitz with respect to $d_\T$.

The inverse map satisfies
\(
  b^{-1}\delta(w)
  \leq\delta(F^{-1}(w))
  \leq a^{-1}\delta(w)\) for \(w\in\D\).
Applying the same argument to $F^{-1}$ shows that
$h_{F^{-1}}=h_F^{-1}$ is Lipschitz.
Thus $h_F\in\operatorname{BiLip}(\T)$.

Conversely, suppose that $h=h_F$ is bi-Lipschitz.
Define its radial extension by
\(
  E_h(0)=0,
   E_h(r\zeta)=rh(\zeta)\) for \(0<r<1,\ \zeta\in\T\).
The map $E_h$ is bi-Lipschitz with respect to $\rho$
and preserves modulus; see
\cite[proof of Theorem~4.8]{Wu2026}.
Its inverse is $E_{h^{-1}}$.
Consequently,
\(
  \Theta=F\circ E_{h^{-1}}
\)
is a $\rho$-uniform homeomorphism of $\D$ whose
boundary map is
\(
  h_\Theta=h\circ h^{-1}=\operatorname{id}_\T.
\)
By Lemma~\ref{lem:uniform-boundary-rigidity},
\(
  D_\Theta:=
  \sup_{z\in\D}d_\D(\Theta(z),z)<\infty.
\)
Hence
\(
  \sup_{z\in\D}\rho(\Theta(z),z)
  \leq\tanh(D_\Theta/2)<1.
\)
Lemma~\ref{lem:bounded-displacement-preserver}
therefore gives
\(
  \delta(\Theta(z))\asymp\delta(z).
\)
Since $F=\Theta\circ E_h$ and $E_h$ preserves modulus,
we conclude that
\(
  \delta(F(z))
  =\delta(\Theta(E_h(z)))
  \asymp\delta(E_h(z))
  =\delta(z).
\)
\end{proof}

\begin{samepage}
\begin{thm}
\label{thm:weighted-continuous-characterization}
For a continuous map $\Phi:\D\to\D$, the following conditions are equivalent:
\begin{enumerate}
\renewcommand{\labelenumi}{\textup{(\roman{enumi})}}
\item $\Phi\in\Pc$.
\item $\Phi$ is a $\rho$-uniform homeomorphism of $\D$, and $h_\Phi\in\operatorname{BiLip}(\T)$.
\item $\Phi$ is a $\rho$-uniform homeomorphism of $\D$, and $\delta(\Phi(z))\asymp\delta(z)$.
\end{enumerate}
\end{thm}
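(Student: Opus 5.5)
The equivalence (ii)$\Leftrightarrow$(iii) is exactly Lemma~\ref{lem:defect-boundary-lipschitz}, applied to the $\rho$-uniform homeomorphism $\Phi$; the extension $h_\Phi$ exists by Lemma~\ref{lem:uniform-boundary-rigidity}. So the plan is to prove (i)$\Rightarrow$(iii) and (ii)$\Rightarrow$(i), using the observation recorded after the definition of $\Pc$: the condition $1-|\Phi(z)|^2\asymp1-|z|^2$ is the same as $\delta(\Phi(z))\asymp\delta(z)$, because $1\leq1+|z|<2$.

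For (i)$\Rightarrow$(iii), let $\Phi\in\Pc$. Then $\Phi\in\Cbi$, so Lemma~\ref{lem:biuniform-from-preservation} (equivalently Theorem~\ref{thm:transport-two-sided}) shows that $\Phi$ is a $\rho$-uniform homeomorphism. The defect comparison is part of the definition of $\Pc$ and translates into $\delta(\Phi(z))\asymp\delta(z)$ by the remark above. This gives (iii).

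For (ii)$\Rightarrow$(i), let $\Phi$ be a $\rho$-uniform homeomorphism with $h_\Phi\in\operatorname{BiLip}(\T)$. The inclusion $\operatorname{BiLip}(\T)\subset\operatorname{SQS}^{\pm}(\T)$ puts $h_\Phi$ in $\operatorname{SQS}^{\pm}(\T)$, so Theorem~\ref{thm:continuous-bidirectional-boundary} gives $\Phi\in\Cbi$. Lemma~\ref{lem:defect-boundary-lipschitz} then gives $\delta(\Phi(z))\asymp\delta(z)$, hence $1-|\Phi(z)|^2\asymp1-|z|^2$, and so $\Phi\in\Pc$.

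No step is a genuine obstacle, since the substantive work is already contained in Lemma~\ref{lem:defect-boundary-lipschitz} and Theorem~\ref{thm:continuous-bidirectional-boundary}. The one point to check is that $\operatorname{BiLip}(\T)\subset\operatorname{SQS}^{\pm}(\T)$ holds for both orientations. Take an orientation-preserving bi-Lipschitz $h$ with constant $L$. An arc $I$ maps to an arc of length at least $L^{-1}|I|$, and a Borel set $E\subset I$ satisfies $|h(E)|\leq L|E|$. Therefore \eqref{eq:sqs-measure-definition} holds with $A=L^2$ and $\alpha=1$. Orientation-reversing bi-Lipschitz maps are handled by composing with complex conjugation, which is an isometry of $\T$.
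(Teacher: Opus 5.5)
Your proposal is correct and matches the paper's proof: the paper closes the same cycle using Lemma~\ref{lem:biuniform-from-preservation}, Lemma~\ref{lem:defect-boundary-lipschitz}, the inclusion $\operatorname{BiLip}(\T)\subset\operatorname{SQS}^{\pm}(\T)$ and Theorem~\ref{thm:continuous-bidirectional-boundary}, differing only in routing (i) to (ii) rather than to (iii). Your explicit check of that inclusion with $A=L^2$ and $\alpha=1$ supplies a step the paper merely asserts.
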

\end{samepage}

\begin{proof}
Assume~(i). By the definition of $\Pc$, we have
$\Phi\in\Cbi$ and
$\delta(\Phi(z))\asymp\delta(z)$.
Lemma~\ref{lem:biuniform-from-preservation} shows that
$\Phi$ is a $\rho$-uniform homeomorphism of $\D$.
Lemma~\ref{lem:defect-boundary-lipschitz} then gives
$h_\Phi\in\operatorname{BiLip}(\T)$, proving~(ii).

The equivalence of~(ii) and~(iii) follows directly from
Lemma~\ref{lem:defect-boundary-lipschitz}.

Finally, assume~(ii). Since
\(
  \operatorname{BiLip}(\T)
  \subset\operatorname{SQS}^{\pm}(\T),
\)
Theorem~\ref{thm:continuous-bidirectional-boundary}
gives $\Phi\in\Cbi$.
Moreover, Lemma~\ref{lem:defect-boundary-lipschitz}
yields $\delta(\Phi(z))\asymp\delta(z)$, equivalently
\(
  1-|\Phi(z)|^2\asymp1-|z|^2.
\)
Thus $\Phi\in\Pc$, proving~(i).
\end{proof}

Let $\Kc$ denote the kernel of the boundary homomorphism
restricted to $\Pc$:
\(
  \Kc=\{\Psi\in\Pc:h_\Psi=\operatorname{id}_{\T}\}.
\)
For a homeomorphism $\Psi$ of $\D$ and \(0<t<1\), define
\begin{align*}
  \omega_\Psi(t)
  &=\sup\bigl\{
    \rho(\Psi(z),\Psi(w)):
    z,w\in\D,\ \rho(z,w)\leq t
  \bigr\},\\
  \ell_\Psi(t)
  &=\inf\bigl\{
    \rho(\Psi(z),\Psi(w)):
    z,w\in\D,\ \rho(z,w)\geq t
  \bigr\}.
\end{align*}
Lemma~\ref{lem:wu-kernel-criterion} applies to arbitrary
maps of $\D$. For disk homeomorphisms, conditions
\textup{(K0)} and \textup{(K1)} are automatic.

\begin{samepage}
\begin{thm}
\label{thm:explicit-continuous-kernel}
For a map $\Psi:\D\to\D$, the following conditions
are equivalent:
\begin{enumerate}
\renewcommand{\labelenumi}{\textup{(\roman{enumi})}}
\item $\Psi\in\Kc$.
\item $\Psi$ is a $\rho$-uniform homeomorphism of $\D$
      with identity boundary values.
\item $\Psi$ is a homeomorphism of $\D$ and
\begin{equation}
\label{eq:kernel-modulus-test}
  \sup_{z\in\D}\rho(\Psi(z),z)<1,
  \qquad
  \lim_{t\downarrow0}\omega_\Psi(t)=0,
  \qquad
  \ell_\Psi(t)>0
  \quad (0<t<1).
\end{equation}
\end{enumerate}
\end{thm}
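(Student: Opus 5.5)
I will prove the cycle (i)$\Rightarrow$(ii)$\Rightarrow$(iii)$\Rightarrow$(i), using only Lemmas~\ref{lem:biuniform-from-preservation}, \ref{lem:uniform-boundary-rigidity} and~\ref{lem:bounded-displacement-preserver}, together with Theorem~\ref{thm:weighted-continuous-characterization}.

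\emph{Step (i)$\Rightarrow$(ii).} If $\Psi\in\Kc$, then $\Psi\in\Pc\subset\Cbi$. Lemma~\ref{lem:biuniform-from-preservation} shows that $\Psi$ is a $\rho$-uniform homeomorphism. By Lemma~\ref{lem:uniform-boundary-rigidity} it extends to $\overline\D$, and the definition of $\Kc$ gives $h_\Psi=\operatorname{id}_\T$.

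\emph{Step (ii)$\Rightarrow$(iii).} Lemma~\ref{lem:uniform-boundary-rigidity} gives $D=\sup_z d_\D(\Psi(z),z)<\infty$, hence $\sup_z\rho(\Psi(z),z)\le\tanh(D/2)<1$. Uniform continuity of $\Psi$ in $\rho$ is exactly $\omega_\Psi(t)\to0$ as $t\downarrow0$.

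For the lower bound, fix $t>0$ and suppose $\ell_\Psi(t)=0$. Then there are pairs $z_n,w_n$ with $\rho(z_n,w_n)\ge t$ and $\rho(\Psi(z_n),\Psi(w_n))\to0$. Uniform continuity of $\Psi^{-1}$ forces $\rho(z_n,w_n)\to0$, which is a contradiction. Note that $\ell_\Psi(t)$ is an infimum and the defining set is nonempty for $0<t<1$, so no further issue arises here.

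\emph{Step (iii)$\Rightarrow$(i).} This is the step requiring care. The condition $\omega_\Psi\to0$ gives uniform continuity of $\Psi$. For uniform continuity of $\Psi^{-1}$, suppose $\rho(\Psi(z_n),\Psi(w_n))\to0$ but $\rho(z_n,w_n)\not\to0$. Along a subsequence $\rho(z_n,w_n)\ge t>0$, so $\rho(\Psi(z_n),\Psi(w_n))\ge\ell_\Psi(t)>0$, a contradiction. Hence $\Psi$ is a $\rho$-uniform homeomorphism with $\sup_z\rho(\Psi(z),z)=M<1$.

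Lemma~\ref{lem:bounded-displacement-preserver} then gives $\Psi\in\Cbi$ and $\delta(\Psi(z))\asymp\delta(z)$, so $\Psi\in\Pc$ (equivalently, by Theorem~\ref{thm:weighted-continuous-characterization}). It remains to show $h_\Psi=\operatorname{id}_\T$; I expect this to be the only substantive point.

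For $\zeta\in\T$, take $z=r\zeta$ with $r\uparrow1$. Bounded displacement gives $\rho(\Psi(r\zeta),r\zeta)\le M$. The Euclidean estimate from the proof of Lemma~\ref{lem:defect-boundary-lipschitz},
\(
|u-v|\le\frac{2M}{1-M}\delta(u),
\)
applied with $u=r\zeta$ and $v=\Psi(r\zeta)$, yields
\(
|\Psi(r\zeta)-r\zeta|\le\frac{2M}{1-M}(1-r)\to0.
\)
Since $\Psi$ extends continuously to $\overline\D$ by Lemma~\ref{lem:uniform-boundary-rigidity}, we get $h_\Psi(\zeta)=\lim_{r\uparrow1}\Psi(r\zeta)=\zeta$. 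Therefore $\Psi\in\Kc$, which closes the cycle.
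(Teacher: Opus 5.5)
Your proposal is correct and follows the paper's proof essentially step for step. It uses the same cycle, the same translation of $\omega_\Psi$ and $\ell_\Psi$ into uniform continuity of $\Psi$ and $\Psi^{-1}$, Lemma~\ref{lem:uniform-boundary-rigidity} for bounded displacement, Lemma~\ref{lem:bounded-displacement-preserver} for membership in $\Pc$, and the Euclidean estimate $|\Psi(z)-z|\le\frac{2M}{1-M}\delta(z)$ to identify $h_\Psi=\operatorname{id}_\T$. The only cosmetic difference is that for (i)$\Rightarrow$(ii) you cite Lemma~\ref{lem:biuniform-from-preservation} directly, whereas the paper cites Theorem~\ref{thm:weighted-continuous-characterization}; this amounts to the same argument.
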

\end{samepage}

\begin{proof}
We first note that, for a homeomorphism $\Psi$,
the condition $\omega_\Psi(t)\to0$ as $t\downarrow0$
is equivalent to uniform continuity of $\Psi$
with respect to $\rho$.
Likewise, positivity of $\ell_\Psi(t)$ for every
$0<t<1$ is equivalent to uniform continuity of $\Psi^{-1}$.
Indeed, $\ell_\Psi(t)>0$ gives
\(
  \rho(\Psi(z),\Psi(w))<\ell_\Psi(t)
  \Longrightarrow
  \rho(z,w)<t.
\)
Conversely, uniform continuity of $\Psi^{-1}$ gives,
for each $0<t<1$, a number $\eta_t>0$ such that
\(
  \rho(\Psi(z),\Psi(w))<\eta_t
  \Longrightarrow
  \rho(z,w)<t.
\)
Taking the contrapositive yields
$\ell_\Psi(t)\geq\eta_t>0$.

Assume~(i).
By the definition of $\Kc$, we have
$\Psi\in\Pc$ and $h_\Psi=\operatorname{id}_\T$.
Theorem~\ref{thm:weighted-continuous-characterization}
shows that $\Psi$ is a $\rho$-uniform homeomorphism
of $\D$, proving~(ii).

Assume~(ii).
Lemma~\ref{lem:uniform-boundary-rigidity} gives
\(
  D_\Psi:=
  \sup_{z\in\D}d_\D(\Psi(z),z)<\infty.
\)
Since $\rho=\tanh(d_\D/2)$, it follows that
\(
  \sup_{z\in\D}\rho(\Psi(z),z)
  \leq\tanh(D_\Psi/2)<1.
\)
The other two conditions in
\eqref{eq:kernel-modulus-test} follow from the
uniform continuity of $\Psi$ and $\Psi^{-1}$,
as explained above.
Thus~(iii) holds.

Finally, assume~(iii).
The two modulus conditions imply that $\Psi$
is a $\rho$-uniform homeomorphism.
Lemma~\ref{lem:bounded-displacement-preserver}
therefore gives $\Psi\in\Cbi$ and
$\delta(\Psi(z))\asymp\delta(z)$.
Hence $\Psi\in\Pc$.

It remains to identify its boundary map.
Set
\(
  M:=\sup_{z\in\D}\rho(\Psi(z),z)<1.
\)
Using
\(
  1-\overline z\Psi(z)
  =1-|z|^2+\overline z\bigl(z-\Psi(z)\bigr),
\)
we obtain
\(
  |\Psi(z)-z|
  \leq M\bigl(1-|z|^2+|z|\,|\Psi(z)-z|\bigr).
\)
Consequently,
\[
  |\Psi(z)-z|
  \leq\frac{M(1-|z|^2)}{1-M|z|}
  \leq\frac{2M}{1-M}\,\delta(z).
\]
For every $\zeta\in\T$ and $0<r<1$, this yields
\[
  |\Psi(r\zeta)-\zeta|
  \leq
  \left(1+\frac{2M}{1-M}\right)(1-r)
  \longrightarrow0
  \qquad (r\uparrow1).
\]
The continuous boundary extension supplied by
Lemma~\ref{lem:uniform-boundary-rigidity}
therefore satisfies $h_\Psi=\operatorname{id}_\T$.
Thus $\Psi\in\Kc$, proving~(i).
\end{proof}

\begin{remark}
The displacement bound in \eqref{eq:kernel-modulus-test}
does not by itself imply uniform continuity of $\Psi$
or $\Psi^{-1}$ with respect to $\rho$.
Under the hypotheses of
Theorem~\ref{thm:uniform-boundary-normal}, positivity of
the boundary normal coefficient ensures both uniform
continuity conditions.
\end{remark}

We next describe the group structure associated with the
canonical factorization.

\begin{samepage}
\begin{cor}
\label{cor:continuous-semidirect-product}
Every $\Phi\in\Pc$ admits a unique factorization
\(
 \Phi=\Psi\circ E_h,
 (\Psi,h)\in\Kc\times\operatorname{BiLip}(\T).
\)
The factors are given by
\(
 h=h_\Phi,
 \Psi=\Phi\circ E_{h^{-1}}.
\)
Consequently, the map
\[
 \Xi:\Kc\rtimes_\alpha\operatorname{BiLip}(\T)
 \longrightarrow\Pc,
 \qquad
 \Xi(\Psi,h)=\Psi\circ E_h,
\]
is an isomorphism of groups, where
\(
 \alpha_h(\Psi)=E_h\circ\Psi\circ E_{h^{-1}}.
\)
\end{cor}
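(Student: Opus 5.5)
The plan is to check, in order, that $E_h$ lies in $\Pc$, that the two factors of the decomposition are well defined and unique, and that $\Xi$ is a homomorphism for the stated twisted product; surjectivity and injectivity then follow from existence and uniqueness. The only analytic input is the statement recalled in the proof of Lemma~\ref{lem:defect-boundary-lipschitz}: for $h\in\operatorname{BiLip}(\T)$, $E_h$ is bi-Lipschitz for $\rho$, preserves modulus, and has inverse $E_{h^{-1}}$. Everything else is bookkeeping with Theorem~\ref{thm:weighted-continuous-characterization}, Corollary~\ref{cor:transport-continuous-group} and the boundary homomorphism.

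\textbf{Step 1: $E_h\in\Pc$.} For $h\in\operatorname{BiLip}(\T)$, $E_h$ is a homeomorphism of $\D$ that is $\rho$-uniform. It extends continuously by $h$ to $\T$, since $|E_h(r\zeta)-h(\zeta)|=1-r$ uniformly in $\zeta$. So condition (ii) of Theorem~\ref{thm:weighted-continuous-characterization} holds, and $E_h\in\Pc$ with $h_{E_h}=h$. Two composition rules follow directly from the definition:
\[
E_{h_1}\circ E_{h_2}=E_{h_1\circ h_2},\qquad E_h^{-1}=E_{h^{-1}}.
\]
Hence $h\mapsto E_h$ is an injective homomorphism $\operatorname{BiLip}(\T)\to\Pc$ and a section of the boundary map.

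\textbf{Step 2: $\Pc$ is a group.} Closure under composition and inverses comes from Corollary~\ref{cor:transport-continuous-group} together with the multiplicativity $q_{\Phi\circ\Theta}=q_\Phi(\Theta)\,q_\Theta$. Both factors on the right are $\asymp1$, and $q_{\Phi^{-1}}=1/q_\Phi\circ\Phi^{-1}$ is also $\asymp1$.

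\textbf{Step 3: the boundary map is a homomorphism onto $\operatorname{BiLip}(\T)$.} On $\Pc$, $\Phi\mapsto h_\Phi$ is a homomorphism, because continuous extensions to $\overline\D$ compose; this uses Lemma~\ref{lem:uniform-boundary-rigidity} and density of $\D$. Its image lies in $\operatorname{BiLip}(\T)$ by Theorem~\ref{thm:weighted-continuous-characterization}, and its kernel is $\Kc$ by definition.

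\textbf{Step 4: existence and uniqueness of the factorization.} Given $\Phi\in\Pc$, put $h=h_\Phi$ and $\Psi=\Phi\circ E_{h^{-1}}$. Then $\Psi\in\Pc$ by Step 2, and
\[
h_\Psi=h\circ h^{-1}=\operatorname{id}_\T,
\]
so $\Psi\in\Kc$ and $\Phi=\Psi\circ E_h$. For uniqueness, suppose $\Phi=\Psi\circ E_h$ with $\Psi\in\Kc$. Taking boundary maps gives $h_\Phi=\operatorname{id}_\T\circ h=h$, and then $\Psi=\Phi\circ E_{h^{-1}}$ is forced.

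\textbf{Step 5: the semidirect product.} First, $\alpha_h(\Psi)=E_h\circ\Psi\circ E_{h^{-1}}$ lies in $\Kc$: it is in $\Pc$, and its boundary map is $h\circ\operatorname{id}_\T\circ h^{-1}=\operatorname{id}_\T$. Step 1 shows that $\alpha$ is an action by automorphisms. With the product
\[
(\Psi_1,h_1)(\Psi_2,h_2)=(\Psi_1\circ\alpha_{h_1}(\Psi_2),\,h_1\circ h_2),
\]
we compute
\[
\Xi(\Psi_1,h_1)\circ\Xi(\Psi_2,h_2)=\Psi_1\circ E_{h_1}\circ\Psi_2\circ E_{h_2}=\Psi_1\circ\bigl(E_{h_1}\circ\Psi_2\circ E_{h_1^{-1}}\bigr)\circ E_{h_1\circ h_2}.
\]
The right-hand side is $\Xi$ of the product pair, so $\Xi$ is a homomorphism. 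Bijectivity of $\Xi$ is exactly the existence and uniqueness proved in Step 4.

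\textbf{Main obstacle.} The only nonformal ingredient is that $E_h$ is $\rho$-bi-Lipschitz. It is quoted from \cite{Wu2026}; if it had to be rechecked, I would compare $\rho(r\zeta,s\eta)$ with $\rho(r h(\zeta),s h(\eta))$ using the polar formula
\[
|1-\overline z w|^2=(1-|z|^2)(1-|w|^2)+|z-w|^2
\]
and the bi-Lipschitz bound $d_\T(h(\zeta),h(\eta))\asymp d_\T(\zeta,\eta)$.
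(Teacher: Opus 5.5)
Your proposal is correct and follows the paper's proof essentially step for step: $E_h\in\Pc$ as a homomorphic section of the boundary homomorphism, existence and uniqueness of the factorization by taking boundary maps, and the same semidirect-product computation. The only minor variation is in your Step~2, where you get the group property of $\Pc$ from Corollary~\ref{cor:transport-continuous-group} and the multiplicativity of $q_\Phi$. The paper instead reads this property off Theorem~\ref{thm:weighted-continuous-characterization}. Both routes are valid.
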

\end{samepage}

\begin{proof}
The factorization and semidirect product description are the continuous
restrictions of \cite[Theorem~5.2 and Corollary~5.3]{Wu2026},
where the general preserver class and its kernel are monoids.
In the continuous setting they are groups, as the following direct
argument verifies.

By Theorem~\ref{thm:weighted-continuous-characterization},
$\Pc$ is a group under composition: both $\rho$-uniformity
and bi-Lipschitz regularity of the boundary map are preserved
under composition and inversion.
Since continuous boundary extensions respect composition,
the map
\(
 \partial:\Pc\longrightarrow\operatorname{BiLip}(\T),
 \partial\Phi=h_\Phi,
\)
is a group homomorphism with kernel $\Kc$.
In particular, $\Kc$ is a normal subgroup of $\Pc$.

For every $h\in\operatorname{BiLip}(\T)$, its radial extension
$E_h$ is a $\rho$-bi-Lipschitz homeomorphism with boundary map $h$,
as recalled in the proof of
Lemma~\ref{lem:defect-boundary-lipschitz}.
Hence $E_h\in\Pc$ by
Theorem~\ref{thm:weighted-continuous-characterization}.
Moreover,
\(
 E_{h_1}\circ E_{h_2}=E_{h_1\circ h_2},
 E_h^{-1}=E_{h^{-1}},
 h_{E_h}=h.
\)
Thus $h\mapsto E_h$ is a homomorphic section of $\partial$.

Given $\Phi\in\Pc$, set $h=h_\Phi$ and
$\Psi=\Phi\circ E_{h^{-1}}$.
Then $\Psi\in\Pc$ and
\(
 h_\Psi=h_\Phi\circ h^{-1}=\operatorname{id}_{\T},
\)
so $\Psi\in\Kc$ and $\Phi=\Psi\circ E_h$.
Conversely, if $\Phi=\widetilde\Psi\circ E_{\widetilde h}$
with $\widetilde\Psi\in\Kc$ and
$\widetilde h\in\operatorname{BiLip}(\T)$,
taking boundary values gives $\widetilde h=h_\Phi=h$.
It follows that
$\widetilde\Psi=\Phi\circ E_{h^{-1}}=\Psi$,
proving uniqueness.

Normality of $\Kc$ and the identities for radial extensions
show that $h\mapsto\alpha_h$ defines an action of
$\operatorname{BiLip}(\T)$ on $\Kc$ by group automorphisms.
With the multiplication
\(
 (\Psi_1,h_1)(\Psi_2,h_2)
 =
 \bigl(\Psi_1\circ\alpha_{h_1}(\Psi_2),\,
       h_1\circ h_2\bigr),
\)
we have
\[
 (\Psi_1\circ E_{h_1})\circ(\Psi_2\circ E_{h_2})
 =
 \bigl(\Psi_1\circ\alpha_{h_1}(\Psi_2)\bigr)
 \circ E_{h_1\circ h_2}.
\]
Therefore $\Xi$ is a group homomorphism.
Its bijectivity follows from the existence and uniqueness
of the factorization.
\end{proof}

\begin{example}
\label{ex:power-stretch}
Let $\mathbb H=\{w\in\mathbb C:\operatorname{Im}w>0\}$.
For $\alpha>0$, $\alpha\ne1$, define
\(
 F_\alpha(w)=w|w|^{\alpha-1},
 \Phi_\alpha=\tau^{-1}\circ F_\alpha\circ\tau,
 \tau(z)=i\frac{1+z}{1-z}.
\)
In polar coordinates,
\[
 F_\alpha(re^{i\theta})=r^\alpha e^{i\theta},
 \qquad r>0,\quad 0<\theta<\pi.
\]
Thus $F_\alpha$ is a homeomorphism of $\mathbb H$ with
inverse $F_{1/\alpha}$. Its radial and tangential stretches are
$\alpha r^{\alpha-1}$ and $r^{\alpha-1}$, respectively.
Consequently, $F_\alpha$ is quasiconformal, and conformal
conjugation gives
\(
 K(\Phi_\alpha)=K(F_\alpha)
 =\max\{\alpha,\alpha^{-1}\}.
\)

The boundary map of $F_\alpha$ is
\(
 h_\alpha(x)=\operatorname{sgn}(x)|x|^\alpha.
\)
It is locally absolutely continuous, with
\(
 h_\alpha'(x)=\alpha|x|^{\alpha-1}\) for almost every \(x\in\mathbb R\).
For every $p>\max\{1,\alpha\}$, one has
\(
 -1<\alpha-1<p-1.
\)
By the standard criterion for power weights \cite{Grafakos2014},
$h_\alpha'\in A_p(\mathbb R)\subset A_\infty(\mathbb R)$.
The same argument, with $\alpha$ replaced by $\alpha^{-1}$,
applies to $h_\alpha^{-1}=h_{1/\alpha}$.
Hence both $h_\alpha$ and its inverse are strongly quasisymmetric.

Applying the characterization of Gonz\'alez and Nicolau
\cite[p.~284]{GN1998} separately to $F_\alpha$ and
$F_\alpha^{-1}=F_{1/\alpha}$ shows that $F_\alpha$ preserves
$H^\infty(\mathbb H)$ interpolating sequences in both directions.
Since the interpolation property is invariant under conformal
equivalence, it follows that $\Phi_\alpha\in\Cbi$.

On the other hand, for $y>0$, set
\(
 z_y=\tau^{-1}(iy)=\frac{y-1}{y+1}.
\)
Since $F_\alpha(iy)=iy^\alpha$, we obtain
\(
 \Phi_\alpha(z_y)=\frac{y^\alpha-1}{y^\alpha+1}.
\)
Therefore
\[
 1-|z_y|^2=\frac{4y}{(1+y)^2},
 \qquad
 1-|\Phi_\alpha(z_y)|^2
 =\frac{4y^\alpha}{(1+y^\alpha)^2},
\]
and hence
\(
 q_{\Phi_\alpha}(z_y)
 =y^{\alpha-1}\frac{(1+y)^2}{(1+y^\alpha)^2}.
\)
As $y\downarrow0$, this tends to zero if $\alpha>1$
and to infinity if $0<\alpha<1$.
Thus the boundary defects are not uniformly comparable,
so $\Phi_\alpha\notin\Pc$.
Consequently, $\Pc\subsetneq\Cbi$, even within the
quasiconformal class.
\end{example}

\subsection{Uniform boundary expansions of first order}
\label{subsec:boundary-first-order}

We now characterize the canonical kernels that admit a uniform
radial expansion of first order at the boundary.
No differentiability in the interior is assumed.
Throughout this subsection, differentials are understood in the
real sense, with $\mathbb C$ identified with $\mathbb R^2$.

\begin{definition}
\label{def:uniform-boundary-expansion}
Let $\Psi:\overline\D\to\overline\D$ be continuous with identity
boundary values. We say that $\Psi$ has a uniform boundary expansion of first
order if there is a continuous map
$v_\Psi:\T\to\mathbb C$ such that
\begin{equation}\label{eq:uniform-boundary-expansion}
 \lim_{t\downarrow0}\sup_{\zeta\in\T}
 \frac{\left|\Psi((1-t)\zeta)-\zeta+t v_\Psi(\zeta)\right|}{t}=0.
\end{equation}
The map $v_\Psi$ is uniquely determined by $\Psi$. We write
\(
 a_\Psi(\zeta)
 :=\operatorname{Re}\!\left(\overline\zeta\,v_\Psi(\zeta)\right).
\)
\end{definition}

Condition~\eqref{eq:uniform-boundary-expansion} controls both the normal
and tangential components of the displacement to first order. The coefficient
$a_\Psi(\zeta)$ is the derivative at $t=0^+$ of
$t\mapsto\delta(\Psi((1-t)\zeta))$; it measures the change of the boundary
defect when approaching the boundary from inside the disk.

\begin{thm}
\label{thm:uniform-boundary-normal}
Let $\Psi:\overline\D\to\overline\D$ be a homeomorphism with identity
boundary values and a uniform boundary expansion of first order.
Then $\Psi$ is differentiable at every $\zeta\in\T$ relative to
$\overline\D$, with real linear differential
\begin{equation}\label{eq:boundary-differential}
 D_\partial\Psi(\zeta)[h]
 =h+\operatorname{Re}(\overline\zeta h)
       \bigl(v_\Psi(\zeta)-\zeta\bigr),
 \qquad h\in\mathbb C.
\end{equation}
The following conditions are equivalent:
\begin{enumerate}
\renewcommand{\labelenumi}{\textup{(\roman{enumi})}}
\item $\Psi|_{\D}\in\Kc$.
\item $\displaystyle\inf_{\zeta\in\T}a_\Psi(\zeta)>0$.
\item $\det D_\partial\Psi(\zeta)>0$ for every $\zeta\in\T$.
\end{enumerate}
Moreover,
\begin{equation}\label{eq:c1-normal-limit}
 \lim_{r\uparrow1}\sup_{\zeta\in\T}
 \left|
 \frac{1-|\Psi(r\zeta)|^2}{1-r^2}-a_\Psi(\zeta)
 \right|=0.
\end{equation}
\end{thm}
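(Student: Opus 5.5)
The plan is to prove the differential formula and \eqref{eq:c1-normal-limit} first, since both follow from \eqref{eq:uniform-boundary-expansion} by expanding to first order. Then (ii)$\Leftrightarrow$(iii) is linear algebra, (i)$\Rightarrow$(ii) follows from \eqref{eq:c1-normal-limit}, and (ii)$\Rightarrow$(i) is reduced to Theorem~\ref{thm:explicit-continuous-kernel}(ii).

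\emph{Differential.} Fix $\zeta\in\T$ and write a nearby point of $\overline\D$ as $z=(1-t)\eta$ with $\eta\in\T$ and $t\geq0$ small. Put $h=z-\zeta$.
\begin{itemize}
\item Since $\operatorname{Re}(\overline\zeta(\eta-\zeta))=-|\eta-\zeta|^2/2$, we get $t=-\operatorname{Re}(\overline\zeta h)+O(|h|^2)$ and $|\eta-\zeta|+t\lesssim|h|$.
\item By \eqref{eq:uniform-boundary-expansion}, which is uniform in $\eta$, and continuity of $v_\Psi$, we have $\Psi(z)-\zeta=(\eta-\zeta)-tv_\Psi(\zeta)+o(|h|)$.
\item Substituting $\eta-\zeta=h+t\eta=h+t\zeta+o(|h|)$ gives $\Psi(z)-\zeta=h+\operatorname{Re}(\overline\zeta h)(v_\Psi(\zeta)-\zeta)+o(|h|)$, which is \eqref{eq:boundary-differential}.
\end{itemize}
The map in \eqref{eq:boundary-differential} is a rank-one perturbation $I+w\otimes\zeta$ of the identity, with $w=v_\Psi(\zeta)-\zeta$. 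Its determinant is $1+\operatorname{Re}(\overline\zeta w)=a_\Psi(\zeta)$. Since $a_\Psi$ is continuous on the compact set $\T$, pointwise positivity is equivalent to a positive infimum, so (ii)$\Leftrightarrow$(iii).

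\emph{Normal limit.} For \eqref{eq:c1-normal-limit}, with $t=1-r$ we have
\[
|\Psi(r\zeta)|^2=|\zeta-tv_\Psi(\zeta)+o(t)|^2=1-2ta_\Psi(\zeta)+o(t)
\]
uniformly in $\zeta$, because $v_\Psi$ is bounded. Dividing by $1-r^2=2t-t^2$ gives the limit.

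\emph{(i)$\Rightarrow$(ii).} If $\Psi|_\D\in\Kc$, then $1-|\Psi(z)|^2\geq c(1-|z|^2)$ for some $c>0$. Letting $r\uparrow1$ in \eqref{eq:c1-normal-limit} gives $a_\Psi\geq c$ on $\T$.

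\emph{(ii)$\Rightarrow$(i).} By Theorem~\ref{thm:explicit-continuous-kernel}, it suffices to show that $\Psi|_\D$ and its inverse are uniformly continuous in $\rho$; identity boundary values are given. I expect this to be the main obstacle, since no interior differentiability is available.
\begin{itemize}
\item \emph{Interior.} On compact subdisks, uniform continuity is automatic: $\Psi^{\pm1}$ are uniformly continuous in the Euclidean metric on $\overline\D$, and $\rho$ and the Euclidean metric are comparable there. Since $\Psi$ is proper, it suffices to treat pairs with both points in $\{r_0\leq|z|<1\}$, with $r_0$ close to $1$.
\item \emph{Setup near $\T$.} Write $z=(1-s)\eta$ and $w=(1-t)\xi$. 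Then
\[
\Psi(z)-\Psi(w)=(\eta-\xi)-(s-t)v_\Psi(\eta)+t\bigl(v_\Psi(\xi)-v_\Psi(\eta)\bigr)+o(s+t),
\]
and \eqref{eq:c1-normal-limit} gives $\delta(\Psi(z))\asymp a_\Psi(\eta)s\asymp s$.
\item \emph{Forward continuity.} If $\rho(z,w)\to0$, then $s/t\to1$ and $|\eta-\xi|/s\to0$. Uniform continuity of $v_\Psi$ then gives $|\Psi(z)-\Psi(w)|=o(s)$, so $\rho(\Psi(z),\Psi(w))\to0$.
\item \emph{Inverse, incomparable scales.} Argue by contraposition: suppose $\rho(z,w)\geq\varepsilon$. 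If $s/t$ is far from $1$, the comparability $\delta(\Psi(\cdot))\asymp\delta(\cdot)$ already forces $\rho(\Psi(z),\Psi(w))$ to be bounded below.
\item \emph{Inverse, comparable scales.} Otherwise $s\asymp t$ and $|\eta-\xi|+|s-t|\gtrsim\varepsilon s$. The linear part $(\eta-\xi,\,s-t)\mapsto(\eta-\xi)-(s-t)v_\Psi(\eta)$ has tangential component essentially $\eta-\xi$ and radial component $-(s-t)a_\Psi(\eta)$ up to tangential mixing. So its size is $\gtrsim(\inf a_\Psi)/(1+\sup|v_\Psi|)$ times $|\eta-\xi|+|s-t|$. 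The error terms are $o(s)$, hence $|\Psi(z)-\Psi(w)|\gtrsim\varepsilon s\asymp\delta(\Psi(z))$, which bounds $\rho(\Psi(z),\Psi(w))$ below.
\end{itemize}
Care is needed to keep $|\eta-\xi|$ small relative to $r_0$, so that the continuity modulus of $v_\Psi$ applies. If instead $|\eta-\xi|$ is not small, the Euclidean separation of the images near $\T$ already gives the lower bound.
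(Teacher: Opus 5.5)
Your proposal is correct. For the differential formula, the determinant, (ii)$\Leftrightarrow$(iii), the normal limit \eqref{eq:c1-normal-limit}, (i)$\Rightarrow$(ii) and forward $\rho$-continuity it follows the paper essentially verbatim. Your rank-one determinant computation is the paper's triangular matrix in the basis $(\zeta,i\zeta)$.

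The genuine difference is in the uniform continuity of $\Psi^{-1}$. The paper argues with sequences. Suppose $\rho(\Psi(z_n),\Psi(w_n))\to0$ and the images accumulate at $\xi\in\T$. Since $\Psi$ is a homeomorphism of the closed disk with identity boundary values, $z_n,w_n\to\xi$. The defect expansion $\delta(\Psi(r\zeta))=t(a_\Psi(\zeta)+o(1))$ and $a_\Psi(\xi)>0$ then turn $\delta(\Psi(w_n))/\delta(\Psi(z_n))\to1$ into $s_n/t_n\to1$. Subtracting the two expansions gives $z_n-w_n=\Psi(z_n)-\Psi(w_n)+o(t_n)=o(t_n)$. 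Compactness thus reduces everything to a single boundary point. No linear map is inverted, and there is no case split on scales or angular separation. Positivity of $a_\Psi$ is used only to read $s_n/t_n$ off the defect ratio.

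Your contrapositive route instead inverts the linearization $(\eta-\xi,s-t)\mapsto(\eta-\xi)-(s-t)v_\Psi(\eta)$, whose determinant is again $a_\Psi(\eta)$. It costs a three-way case analysis with thresholds ($r_0$ and the angular cutoff) depending on $\varepsilon$. That dependence is harmless, since only $\ell_\Psi(\varepsilon)>0$ is needed. In return it yields an explicit lower bound for $\ell_\Psi(\varepsilon)$ in terms of $\inf a_\Psi$, $\sup|v_\Psi|$ and the modulus of continuity of $v_\Psi$.

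One point should be stated precisely. In your incomparable-scales case, crude comparability $c\,\delta\le\delta\circ\Psi\le C\,\delta$ separates the image defects only when $s/t\notin[1/K,K]$ with $K$ large relative to $C/c$, for instance $K=4C/c$. It gives nothing for a ratio such as $s/t=2$. So "far from $1$" must mean outside such a range, and every bounded ratio has to go to your linearization case. That case does handle them, after absorbing the radial component $|\eta-\xi|^2/2$ of $\eta-\xi$ and the term $t\bigl(v_\Psi(\xi)-v_\Psi(\eta)\bigr)$ by taking the angular cutoff small in terms of $\varepsilon$.
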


\begin{proof}
Set $B(\zeta)=\zeta-v_\Psi(\zeta)$. With $t=1-r$, the assumed
expansion becomes
\begin{equation}\label{eq:boundary-expansion-displacement}
 \Psi(r\zeta)=r\zeta+tB(\zeta)+o(t),
\end{equation}
uniformly in $\zeta\in\T$. It also gives
\(
 1-|\Psi(r\zeta)|^2=2t\,a_\Psi(\zeta)+o(t),
 1-r^2=2t-t^2.
\)
This proves \eqref{eq:c1-normal-limit} and, equivalently,
\begin{equation}\label{eq:boundary-expansion-defect}
 \delta(\Psi(r\zeta))
 =t\bigl(a_\Psi(\zeta)+o(1)\bigr)
\end{equation}
uniformly in $\zeta$.

We next verify the boundary differentiability assertion. Fix
$\zeta_0\in\T$ and write $z=(1-t)\eta$, $h=z-\zeta_0$, and
$x=\operatorname{Re}(\overline{\zeta_0}h)$. As $z\to\zeta_0$ within
$\D$, one has $t\leq|h|$, $\eta\to\zeta_0$, and
\(
 t+x
 =-(1-t)\bigl(1-\operatorname{Re}(\overline{\zeta_0}\eta)\bigr)
 =O(|h|^2).
\)
Let $L_{\zeta_0}$ be the real linear map on the right side of
\eqref{eq:boundary-differential}. Equation~\eqref{eq:boundary-expansion-displacement}
and continuity of $B$ give
\[
 \Psi(z)-\Psi(\zeta_0)-L_{\zeta_0}[h]
 =t\bigl(B(\eta)-B(\zeta_0)\bigr)
   +(t+x)B(\zeta_0)+o(t)
 =o(|h|).
\]
For $z\in\T$, the same conclusion follows from the identity boundary
values and $x=O(|h|^2)$. Thus the differential in
\eqref{eq:boundary-differential} satisfies
\[
 \lim_{\substack{z\to\zeta_0\\z\in\overline\D}}
 \frac{|\Psi(z)-\Psi(\zeta_0)
       -D_\partial\Psi(\zeta_0)[z-\zeta_0]|}{|z-\zeta_0|}=0.
\]
Writing $v_\Psi(\zeta)=a_\Psi(\zeta)\zeta+b_\Psi(\zeta)i\zeta$,
the matrix of $D_\partial\Psi(\zeta)$ in the positively oriented
orthonormal basis $(\zeta,i\zeta)$ is
\(
 \begin{pmatrix}
 a_\Psi(\zeta)&0\\ b_\Psi(\zeta)&1
 \end{pmatrix}.
\)
Hence $\det D_\partial\Psi(\zeta)=a_\Psi(\zeta)$, and continuity on
the compact circle proves $(ii)\Leftrightarrow(iii)$. If (i) holds,
the lower bound for the boundary defect ratio and
\eqref{eq:c1-normal-limit} give (ii).

It remains to prove $(ii)\Rightarrow(i)$. Suppose that
$\inf_{\T}a_\Psi>0$. Equation~\eqref{eq:boundary-expansion-defect}
gives $\delta(\Psi(z))\asymp\delta(z)$ near the boundary. We shall
prove uniform continuity of $\Psi$ and $\Psi^{-1}$ with respect to
$\rho$ by checking sequences of asymptotically small pairs.
For $u,w\in\D$,
\(
 \frac{|u-w|}{2\delta(u)+|u-w|}
 \leq\rho(u,w)\leq\frac{|u-w|}{\delta(u)}.
\)
Consequently,
\begin{equation}\label{eq:boundary-small-pair-test}
 \rho(u_n,w_n)\longrightarrow0
 \quad\Longleftrightarrow\quad
 |u_n-w_n|=o(\delta(u_n)).
\end{equation}
Either condition also implies
$\frac{\delta(w_n)}{\delta(u_n)}\to1$.

First suppose that $\rho(z_n,w_n)\to0$. Every subsequence has a
further subsequence on which $z_n\to\xi\in\overline\D$, and then
$w_n\to\xi$ as well. If $\xi\in\D$, continuity gives
$\rho(\Psi(z_n),\Psi(w_n))\to0$. If $\xi\in\T$, write
\(
 z_n=(1-t_n)\zeta_n,
 w_n=(1-s_n)\eta_n.
\)
Equation~\eqref{eq:boundary-small-pair-test} yields
$|z_n-w_n|=o(t_n)$ and $s_n/t_n\to1$, while
$\zeta_n,\eta_n\to\xi$. Continuity of $B$ implies
\(
 t_nB(\zeta_n)-s_nB(\eta_n)=o(t_n).
\)
Subtracting \eqref{eq:boundary-expansion-displacement} at the two
points gives $|\Psi(z_n)-\Psi(w_n)|=o(t_n)$. Since
$\delta(\Psi(z_n))\asymp t_n$,
\eqref{eq:boundary-small-pair-test} again gives
$\rho(\Psi(z_n),\Psi(w_n))\to0$. This proves uniform continuity
of $\Psi$ with respect to $\rho$.

For the inverse, suppose that
$\rho(\Psi(z_n),\Psi(w_n))\to0$. By compactness, every subsequence
has a further subsequence on which
$\Psi(z_n)\to\xi\in\overline\D$, and then
$\Psi(w_n)\to\xi$ as well. If $\xi\in\D$, continuity of the
inverse gives $\rho(z_n,w_n)\to0$. If $\xi\in\T$, continuity of
the inverse on the closed disk and its identity boundary values imply
$z_n,w_n\to\xi$. Use the same radial notation as above. The criterion for
pairs of nearby points gives
\(
 |\Psi(z_n)-\Psi(w_n)|=o(t_n),
 \frac{\delta(\Psi(w_n))}{\delta(\Psi(z_n))}\rightarrow1.
\)
By \eqref{eq:boundary-expansion-defect}, the latter ratio equals
\(
 \frac{s_n}{t_n}
 \frac{a_\Psi(\eta_n)+o(1)}{a_\Psi(\zeta_n)+o(1)}.
\)
Since $\zeta_n,\eta_n\to\xi$ and $a_\Psi(\xi)>0$, we obtain
$s_n/t_n\to1$. Continuity of $B$ and subtraction in
\eqref{eq:boundary-expansion-displacement} now yield
$|z_n-w_n|=o(t_n)$. Thus $\rho(z_n,w_n)\to0$, proving uniform
continuity of $\Psi^{-1}$. Theorem~\ref{thm:explicit-continuous-kernel}
therefore gives $\Psi|_{\D}\in\Kc$.
\end{proof}

\begin{remark}
Let $\Psi:\overline\D\to\overline\D$ be continuous, with
$\Psi(\D)\subseteq\D$ and
$\Psi|_{\T}=\operatorname{id}_{\T}$, and suppose that $\Psi$
has a uniform boundary expansion of first order. Then
\[
 \Psi|_{\D}\in\Kc
 \quad\Longleftrightarrow\quad
 \Psi|_{\D}\text{ is injective and }
 \inf_{\zeta\in\T}a_\Psi(\zeta)>0.
\]

Indeed, the prescribed boundary values make $\Psi|_{\D}$ proper.
For every compact set $K\subset\D$, the preimage $\Psi^{-1}(K)$
is compact in $\overline\D$ and disjoint from $\T$, hence compact
in $\D$.
If $\Psi|_{\D}$ is injective, invariance of domain makes its image
open in $\D$, while properness makes its image closed in $\D$.
Since the image is nonempty and $\D$ is connected, it equals $\D$.
Together with the identity boundary values, this shows that
$\Psi$ is a continuous bijection of $\overline\D$ onto itself.
By compactness, $\Psi$ is therefore a homeomorphism.

Thus, under the stated assumptions, injectivity of $\Psi|_{\D}$
is equivalent to $\Psi$ being a homeomorphism of $\overline\D$.
The asserted equivalence now follows from
Theorem~\ref{thm:uniform-boundary-normal}, since every element
of $\Kc$ is a homeomorphism of $\D$.
The boundary condition does not replace interior injectivity.
\end{remark}

\begin{cor}
\label{cor:c1-boundary-normal}
Let $\Psi:\overline\D\to\overline\D$ be a homeomorphism of class
$C^1(\overline\D)$ with identity boundary values, where
$C^1(\overline\D)$ means that $\Psi$ has a $C^1$ extension to a
neighborhood of $\overline\D$. Define
\(
 a_\Psi(\zeta)
 :=\operatorname{Re}\!\left(
 \overline\zeta\,D\Psi(\zeta)[\zeta]
 \right)\) for \(\zeta\in\T\).
Then the following conditions are equivalent:
\begin{enumerate}
\renewcommand{\labelenumi}{\textup{(\roman{enumi})}}
\item $\Psi|_{\D}\in\Kc$.
\item $\displaystyle\inf_{\zeta\in\T}a_\Psi(\zeta)>0$.
\item $\det D\Psi(\zeta)>0$ for every $\zeta\in\T$.
\end{enumerate}
\end{cor}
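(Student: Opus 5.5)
The plan is to reduce Corollary~\ref{cor:c1-boundary-normal} to Theorem~\ref{thm:uniform-boundary-normal}. First I will show that a $C^1(\overline\D)$ homeomorphism with identity boundary values has a uniform boundary expansion of first order, with $v_\Psi(\zeta)=D\Psi(\zeta)[\zeta]$. Then I will match the determinant conditions.

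\emph{Expansion.} Let $\widetilde\Psi$ be a $C^1$ extension to an open neighborhood $U$ of $\overline\D$. Fix a slightly larger closed disk $\overline{D(0,1+\epsilon)}\subset U$. On this compact set $D\widetilde\Psi$ is uniformly continuous. For $\zeta\in\T$ and $0<t<1$, the segment from $\zeta$ to $(1-t)\zeta$ lies in $\overline\D$. The mean value theorem in integral form gives
\[
 \Psi((1-t)\zeta)-\zeta
 =-t\int_0^1 D\widetilde\Psi\bigl((1-st)\zeta\bigr)[\zeta]\,ds,
\]
where we used $\Psi(\zeta)=\zeta$. Hence
\[
 \bigl|\Psi((1-t)\zeta)-\zeta+tD\Psi(\zeta)[\zeta]\bigr|
 \leq t\,\omega(t),
\]
where $\omega$ is the modulus of continuity of $D\widetilde\Psi$ on $\overline\D$. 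Thus $\omega(t)\to0$ uniformly in $\zeta$. The map $v_\Psi(\zeta)=D\Psi(\zeta)[\zeta]$ is continuous on $\T$, so \eqref{eq:uniform-boundary-expansion} holds. Its normal coefficient $\operatorname{Re}(\overline\zeta v_\Psi(\zeta))$ is exactly the $a_\Psi$ of the corollary. Theorem~\ref{thm:uniform-boundary-normal} then gives the equivalence of (i) and (ii) directly.

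\emph{Determinants.} The full differential $D\Psi(\zeta)$ of the $C^1$ extension restricts to the relative differential on $\overline\D$. Relative differentiability at a boundary point determines the linear map uniquely, because $\overline\D$ contains a half-disk at $\zeta$ and so spans all directions. Hence $D\Psi(\zeta)=D_\partial\Psi(\zeta)$ and $\det D\Psi(\zeta)=\det D_\partial\Psi(\zeta)=a_\Psi(\zeta)$. The same identity can be read off directly: differentiating $\Psi(e^{i\theta})=e^{i\theta}$ gives $D\Psi(\zeta)[i\zeta]=i\zeta$, and in the basis $(\zeta,i\zeta)$ the matrix is lower triangular with diagonal entries $a_\Psi(\zeta)$ and $1$. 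Therefore (iii) in the corollary coincides with (iii) of Theorem~\ref{thm:uniform-boundary-normal}, which is equivalent to (ii) there by compactness of $\T$ and continuity of $a_\Psi$.

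There is no real obstacle here. The only point needing care is the uniformity of the expansion, and it comes from uniform continuity of $D\widetilde\Psi$ on a compact neighborhood, which is why $C^1(\overline\D)$ is defined via an extension. The remaining steps are bookkeeping against Theorem~\ref{thm:uniform-boundary-normal}.
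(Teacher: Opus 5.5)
Your proposal is correct and follows essentially the same route as the paper. You obtain the uniform first-order expansion with $v_\Psi(\zeta)=D\Psi(\zeta)[\zeta]$ from uniform continuity of the differential, identify $D\Psi(\zeta)$ with the relative boundary differential, and then invoke Theorem~\ref{thm:uniform-boundary-normal}; your integral mean value estimate and the lower triangular matrix in the basis $(\zeta,i\zeta)$ simply spell out details that the paper's two-line proof leaves implicit.
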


\begin{proof}
Uniform Taylor expansion of first order on the compact boundary
gives \eqref{eq:uniform-boundary-expansion} with
$v_\Psi(\zeta)=D\Psi(\zeta)[\zeta]$.
The relative boundary differential agrees with $D\Psi$ on $\T$,
so all assertions follow from
Theorem~\ref{thm:uniform-boundary-normal}.
\end{proof}

\begin{example}
\label{ex:pointwise-boundary-insufficient}
There is a homeomorphism $\Psi:\overline\D\to\overline\D$ with identity
boundary values whose restriction to $\D$ does not belong to $\Kc$,
although $\Psi$ admits a homeomorphic extension
$\widehat\Psi:\mathbb C\to\mathbb C$ that is differentiable at every
$\zeta\in\T$, with $D\widehat\Psi(\zeta)=I$.

For $n\geq3$, set
\(
 \theta_n=2^{-n},
 \delta_n=2^{-3n},
 c_n=(1-\delta_n)e^{i\theta_n},
 R_n=\delta_n/4.
\)
The closed disks $\overline B(c_n,R_n)$ lie in $\D$, are pairwise
disjoint, and accumulate only at $1$. Indeed,
$|c_n|+R_n=1-3\delta_n/4<1$, and, if $m>n$, then
\[
 |c_n-c_m|
 \geq(1-\delta_m)\sin(\theta_n-\theta_m)
 \geq\frac{\theta_n}{8}
 >\frac{9\delta_n}{32}
 \geq R_n+R_m.
\]

Let $h_n:[0,R_n]\to[0,R_n]$ be the increasing piecewise affine
homeomorphism whose graph passes through
\(
 (0,0),
 (R_n/n,R_n/2),
 (R_n,R_n).
\)
Define a map on $\mathbb C$ by
\[
 \widehat\Psi(c_n+su)=c_n+h_n(s)u
 \quad(0<s\leq R_n,\ |u|=1),
 \qquad
 \widehat\Psi(c_n)=c_n,
\]
and put $\widehat\Psi(z)=z$ outside these disks.

Each disk is mapped homeomorphically onto itself, with its boundary
fixed, so the resulting map is bijective. The disks are locally
finite away from $1$; hence both $\widehat\Psi$ and its inverse are
continuous there. Their displacements on the $n$th disk are at most
$2R_n\to0$, which also gives continuity at $1$.
Thus $\widehat\Psi$ is a homeomorphism of $\mathbb C$, and
$\Psi:=\widehat\Psi|_{\overline\D}$ is a homeomorphism of
$\overline\D$ with identity boundary values.

At every boundary point other than $1$, the map agrees locally
with the identity. For $z\in\overline B(c_n,R_n)$, one has
\(
 |z-1|
 \geq(1-\delta_n)\sin\theta_n-R_n
 \geq\frac{\theta_n}{8}.
\)
Consequently,
\(
 \frac{|\widehat\Psi(z)-z|}{|z-1|}
 \leq\frac{4\delta_n}{\theta_n}
 =4\cdot2^{-2n}\rightarrow0.
\)
Outside the disks the numerator vanishes. Moreover, any sequence
of points in these disks tending to $1$ must have disk indices
tending to infinity. Therefore $D\widehat\Psi(1)=I$ as well.
In particular, the boundary differential is constant and invertible,
and its normal component is identically one.

Nevertheless, set
\(
 w_n=c_n+\frac{\delta_n}{4n}e^{i\theta_n}.
\)
The definition gives
\(
 \Psi(w_n)=c_n+\frac{\delta_n}{8}e^{i\theta_n},
 \Psi(c_n)=c_n.
\)
Direct calculation yields
\[
 \rho(c_n,w_n)
 =\frac{1}{8n-1-(4n-1)\delta_n}
 \longrightarrow0,\qquad
 \rho(\Psi(c_n),\Psi(w_n))
 =\frac{1}{15-7\delta_n}
 \longrightarrow\frac1{15}.
\]
Thus $\Psi|_{\D}$ is not uniformly continuous with respect to
$\rho$. By Theorem~\ref{thm:explicit-continuous-kernel},
$\Psi|_{\D}\notin\Kc$.

The failure of uniformity can also be seen directly. Since
$D\widehat\Psi(\zeta)=I$ for every $\zeta\in\T$, any uniform boundary
expansion of first order would necessarily have
$v_\Psi(\zeta)=\zeta$. However,
\[
 \frac{|\Psi(w_n)-w_n|}{1-|w_n|}
 =\frac{n-2}{2(4n-1)}
 \longrightarrow\frac18,
\]
contradicting the required uniform $o(t)$ remainder.
Hence pointwise boundary differentiability, even with a continuous
and everywhere invertible boundary differential, cannot replace
a uniform boundary expansion of first order.
\end{example}

\begin{samepage}
\begin{cor}
\label{cor:uniform-boundary-factor-classes}
Let $\Pc^{1,\partial}$ denote the maps in $\Pc$ whose canonical kernel
admits a uniform boundary expansion of first order. As a set,
$\Pc^{1,\partial}$ consists precisely of the maps
\(
 \Phi=\Psi\circ E_h, h\in\operatorname{BiLip}(\T),
\)
where $\Psi$ is a homeomorphism of $\overline\D$ that has identity boundary
values and admits the expansion~\eqref{eq:uniform-boundary-expansion} with
\(
 \inf_{\zeta\in\T}
 \operatorname{Re}\bigl(\overline\zeta v_\Psi(\zeta)\bigr)>0.
\)
Both $h$ and $\Psi$ are uniquely determined by $\Phi$.
\end{cor}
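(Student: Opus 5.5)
The plan is to combine the unique canonical factorization of Corollary~\ref{cor:continuous-semidirect-product} with the kernel criterion of Theorem~\ref{thm:uniform-boundary-normal}. One subtlety needs care first. The phrase "canonical kernel admits a uniform boundary expansion" refers to a map on $\D$, while Definition~\ref{def:uniform-boundary-expansion} is stated for maps of $\overline\D$. So we read it as follows: the continuous extension of $\Psi=\Phi\circ E_{h_\Phi^{-1}}$ to $\overline\D$, supplied by Lemma~\ref{lem:uniform-boundary-rigidity}, satisfies \eqref{eq:uniform-boundary-expansion}. Since $\Psi\in\Kc$, this extension is a homeomorphism of $\overline\D$ with identity boundary values. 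Hence it falls within the hypotheses of Theorem~\ref{thm:uniform-boundary-normal}.

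For the inclusion of $\Pc^{1,\partial}$ in the described set, take $\Phi\in\Pc^{1,\partial}$. Corollary~\ref{cor:continuous-semidirect-product} writes $\Phi=\Psi\circ E_h$ with $h=h_\Phi\in\operatorname{BiLip}(\T)$ and $\Psi\in\Kc$. By assumption, the extension of $\Psi$ to $\overline\D$ has a uniform boundary expansion. Since $\Psi|_\D\in\Kc$, the implication (i)$\Rightarrow$(ii) of Theorem~\ref{thm:uniform-boundary-normal} gives $\inf_\T a_\Psi>0$, so $\Phi$ has the stated form.

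For the reverse inclusion, let $\Psi$ be a homeomorphism of $\overline\D$ with identity boundary values, admitting the expansion with $\inf_\T a_\Psi>0$, and let $h\in\operatorname{BiLip}(\T)$. The implication (ii)$\Rightarrow$(i) of Theorem~\ref{thm:uniform-boundary-normal} gives $\Psi|_\D\in\Kc$. As in the proof of Corollary~\ref{cor:continuous-semidirect-product}, $E_h\in\Pc$, and $\Pc$ is a group, so $\Phi=\Psi|_\D\circ E_h\in\Pc$. The uniqueness part of that corollary identifies $h$ with $h_\Phi$ and $\Psi|_\D$ with the canonical kernel $\Phi\circ E_{h_\Phi^{-1}}$. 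That kernel therefore admits the expansion, so $\Phi\in\Pc^{1,\partial}$.

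Uniqueness of $h$ and $\Psi|_\D$ follows directly from Corollary~\ref{cor:continuous-semidirect-product}. The map $\Psi$ on $\overline\D$ is then fixed by density of $\D$, and $v_\Psi$ is fixed by the uniqueness noted in Definition~\ref{def:uniform-boundary-expansion}. I expect no real obstacle beyond the bookkeeping just described: making sure that the closed-disk extension is the object to which the expansion hypothesis refers, and that Theorem~\ref{thm:uniform-boundary-normal} is applied to it and not to the restriction $\Psi|_\D$.
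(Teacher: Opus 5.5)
Your proposal is correct and follows the paper's route. The paper also uses Theorem~\ref{thm:uniform-boundary-normal} to identify the kernels with the stated expansion as exactly those with $\inf_{\T}a_\Psi>0$, and then takes the parametrization and uniqueness from Corollary~\ref{cor:continuous-semidirect-product}; your bookkeeping about applying the expansion to the closed-disk extension from Lemma~\ref{lem:uniform-boundary-rigidity} just makes explicit what the paper's two-line proof leaves implicit.
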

\end{samepage}

\begin{proof}
By Theorem~\ref{thm:uniform-boundary-normal}, a homeomorphism
$\Psi$ with the stated boundary expansion satisfies
$\Psi|_{\D}\in\Kc$ if and only if the displayed positivity
condition holds. The parametrization and uniqueness therefore
follow from Corollary~\ref{cor:continuous-semidirect-product}.
\end{proof}

\begin{samepage}
\begin{cor}
\label{cor:c1-factor-classes}
Let $\Pc^{1,\mathrm{cl}}$ denote the maps in $\Pc$ whose canonical
kernels extend to maps of class $C^1(\overline\D)$.
As a set, $\Pc^{1,\mathrm{cl}}$ consists precisely of the maps
\(
 \Phi=(\Psi|_{\D})\circ E_h,
 h\in\operatorname{BiLip}(\T),
\)
where $\Psi$ is a homeomorphism of $\overline\D$ of class
$C^1(\overline\D)$ satisfying
\(
 \Psi|_{\T}=\operatorname{id}_{\T},
 \inf_{\zeta\in\T}\operatorname{Re}
 \bigl(\overline\zeta\,D\Psi(\zeta)[\zeta]\bigr)>0.
\)
Both $h$ and $\Psi$ are uniquely determined by $\Phi$.
\end{cor}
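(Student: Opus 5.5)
The argument parallels the proof of Corollary~\ref{cor:uniform-boundary-factor-classes}. It combines the unique canonical factorization of Corollary~\ref{cor:continuous-semidirect-product} with the $C^1$ kernel criterion of Corollary~\ref{cor:c1-boundary-normal}.

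First take $\Phi\in\Pc^{1,\mathrm{cl}}$. Corollary~\ref{cor:continuous-semidirect-product} gives $\Phi=\Psi_0\circ E_h$ with $h=h_\Phi\in\operatorname{BiLip}(\T)$ and $\Psi_0=\Phi\circ E_{h^{-1}}\in\Kc$. By hypothesis $\Psi_0$ extends to a map $\Psi$ of class $C^1(\overline\D)$. By Lemma~\ref{lem:uniform-boundary-rigidity}, $\Psi_0$ is a $\rho$-uniform homeomorphism of $\D$ with a unique homeomorphic extension to $\overline\D$. The $C^1$ extension is continuous, so it must coincide on $\overline\D$ with this homeomorphic extension, by density of $\D$. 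Hence $\Psi$ is a homeomorphism of $\overline\D$, and since $h_{\Psi_0}=\operatorname{id}_\T$ we get $\Psi|_\T=\operatorname{id}_\T$. Now $\Psi|_\D=\Psi_0\in\Kc$, so implication (i)$\Rightarrow$(ii) of Corollary~\ref{cor:c1-boundary-normal} yields $\inf_\T\operatorname{Re}(\overline\zeta D\Psi(\zeta)[\zeta])>0$.

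Conversely, let $\Psi$ be a $C^1(\overline\D)$ homeomorphism of $\overline\D$ with identity boundary values and positive infimum, and let $h\in\operatorname{BiLip}(\T)$. Corollary~\ref{cor:c1-boundary-normal}, via (ii)$\Rightarrow$(i), gives $\Psi|_\D\in\Kc$. As shown in the proof of Corollary~\ref{cor:continuous-semidirect-product}, $E_h\in\Pc$. Since $\Pc$ is a group, $\Phi=(\Psi|_\D)\circ E_h\in\Pc$. By the uniqueness part of that corollary, the canonical kernel of $\Phi$ is exactly $\Psi|_\D$, which extends to the $C^1(\overline\D)$ map $\Psi$. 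Therefore $\Phi\in\Pc^{1,\mathrm{cl}}$.

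Uniqueness is immediate. The factorization forces $h=h_\Phi$ and $\Psi|_\D=\Phi\circ E_{h^{-1}}$, and $\Psi$ on $\overline\D$ is determined by its values on $\D$ through continuity. The only point needing care is the identification in the forward direction: the $C^1$ extension of the kernel must be shown to be a homeomorphism of $\overline\D$ with identity boundary values, so that Corollary~\ref{cor:c1-boundary-normal} applies. This is handled by the density argument together with Lemma~\ref{lem:uniform-boundary-rigidity}, as described above.
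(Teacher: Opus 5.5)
Your proposal is correct and is essentially the paper's argument, which simply combines Corollary~\ref{cor:c1-boundary-normal} with Corollary~\ref{cor:continuous-semidirect-product}, the latter also giving uniqueness. Your density argument identifying the $C^1$ extension with the homeomorphic extension of the kernel usefully spells out a step the paper leaves implicit; note only that the $\rho$-uniformity of $\Psi_0$ comes from Theorem~\ref{thm:explicit-continuous-kernel}, since $\Psi_0\in\Kc$, whereas Lemma~\ref{lem:uniform-boundary-rigidity} assumes $\rho$-uniformity and supplies only the extension.
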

\end{samepage}

\begin{proof}
Combine Corollary~\ref{cor:c1-boundary-normal} with
Corollary~\ref{cor:continuous-semidirect-product}.
The latter also gives uniqueness.
This parametrization concerns only the underlying set; no group structure
is asserted for the subclass with the additional $C^1$ condition.
\end{proof}

\begin{example}
\label{ex:rough-interior-kernel}
Define $f:[0,1]\to[0,1]$ by
\[
 f(r)=
 \begin{cases}
  2r^2,&0\leq r\leq\frac12,\\[2pt]
  r,&\frac12\leq r\leq1,
 \end{cases}
\]
and put $\Psi(0)=0$ and
$\Psi(r\zeta)=f(r)\zeta$ for $0<r\leq1$ and $\zeta\in\T$.
The function $f$ is a strictly increasing homeomorphism of $[0,1]$,
so $\Psi$ is a homeomorphism of the closed disk.
It is the identity for $|z|\geq1/2$, hence its uniform boundary
expansion has $v_\Psi(\zeta)=\zeta$ and $a_\Psi\equiv1$.
Theorem~\ref{thm:uniform-boundary-normal} therefore gives
$\Psi|_{\D}\in\Kc$.

Since $f'_-(1/2)=2$ and $f'_+(1/2)=1$, the map $\Psi$
is not differentiable at any point of the circle $|z|=1/2$.
Taking $\Phi=\Psi|_{\D}$ and $h=\operatorname{id}_{\T}$ therefore
gives
$\Phi\in\Pc^{1,\partial}\setminus\Pc^{1,\mathrm{cl}}$.
Since every kernel of class $C^1$ on the closed disk admits a uniform boundary
expansion of first order, this proves
\(
 \Pc^{1,\mathrm{cl}}\subsetneq\Pc^{1,\partial}.
\)
\end{example}

A nonvanishing interior Jacobian is not required for membership
in $\Kc$. Indeed, $\Psi(z)=|z|^2z$ is a smooth homeomorphism
of $\overline\D$ with identity boundary values and
$a_\Psi\equiv3$. Corollary~\ref{cor:c1-boundary-normal}
therefore gives $\Psi|_{\D}\in\Kc$, although $D\Psi(0)=0$.

\begin{remark}
\label{rem:kernel-origin-regularity}
Regularity of the canonical kernel $\Psi$ does not automatically
give the same regularity for $\Phi=\Psi\circ E_h$.
Even when $\Psi=\operatorname{id}_{\D}$ and $h$ is a smooth circle
diffeomorphism, $E_h$ need not be differentiable at the origin.
More precisely, $E_h$ is smooth on $\D\setminus\{0\}$ and is
real differentiable at the origin if and only if $h$ is a rotation
or a rotation composed with complex conjugation.

Indeed, radial homogeneity gives $E_h(tz)=tE_h(z)$ for
$z\in\D$ and $0<t<1$. If $E_h$ is real differentiable at the
origin, then
\[
 DE_h(0)[z]
 =\lim_{t\downarrow0}\frac{E_h(tz)-E_h(0)}{t}
 =E_h(z),
 \qquad z\in\D.
\]
Since $|E_h(z)|=|z|$, the real linear map $DE_h(0)$ is orthogonal.
Consequently, for some $\lambda\in\T$, the boundary map has the form
$h(\zeta)=\lambda\zeta$ if $h$ preserves orientation and
$h(\zeta)=\lambda\overline\zeta$ if $h$ reverses orientation.
Conversely, these boundary maps have real linear radial extensions,
which are smooth throughout $\D$.

The next subsection considers quasiconformal regularity
of the canonical kernel and of the full mapping.
\end{remark}

\subsection{Quasiconformal kernels and radial extensions}
\label{subsec:qc-kernel}

In this subsection, quasiconformal maps are homeomorphisms of $\D$ that
preserve orientation. Statements for maps that reverse orientation follow by
composition with complex conjugation.
We consider factorizations $\Phi=\Psi\circ E_h$, where $\Psi$
is a disk homeomorphism with identity boundary values and $h$ is a circle
homeomorphism that preserves orientation.
When $h$ is bi-Lipschitz, quasiconformality of $\Psi$ is equivalent
to quasiconformality of $\Phi$.
We also consider arbitrary circle homeomorphisms $h$ that preserve orientation
and determine when a quasiconformal factor
with identity boundary values yields a quasiconformal full mapping.

\begin{samepage}
\begin{lem}
\label{lem:radial-extension-qc}
Let $h:\T\to\T$ be a homeomorphism that preserves orientation, and let
$H:\mathbb R\to\mathbb R$ be a lift satisfying
\(
 h(e^{i\theta})=e^{iH(\theta)},
 H(\theta+2\pi)=H(\theta)+2\pi.
\)
Then $E_h$ is quasiconformal if and only if $h$ is bi-Lipschitz.
In this case $H$ is locally absolutely continuous,
with $H'(\theta)>0$ almost everywhere, and
\begin{equation}\label{eq:radial-qc-dilatation}
 K(E_h)
 =\operatorname*{ess\,sup}_{\theta\in[0,2\pi]}
 \max\left\{H'(\theta),\frac{1}{H'(\theta)}\right\}.
\end{equation}
\end{lem}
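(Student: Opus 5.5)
Since $E_h$ is radially homogeneous, I would work in logarithmic polar coordinates, where the dilatation is explicit. Write $z=re^{i\theta}$ with $s=\log r$, so that $E_h$ becomes $(s,\theta)\mapsto(s,H(\theta))$ on the strip $\{s<0\}$, periodic in $\theta$. The map $w=\log z$ (locally) is conformal away from the origin. Hence, on $\D\setminus\{0\}$, $E_h$ is conjugate by conformal maps to the product map $P(s,\theta)=(s,H(\theta))$ on the half-plane $\{s<0\}$. Quasiconformality of $E_h$ on $\D$ and on the punctured disk are equivalent, because a point is removable for quasiconformal homeomorphisms. So the problem reduces to deciding when $P$ is quasiconformal and computing $K(P)$.

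For sufficiency, suppose $h$ is bi-Lipschitz. Then $H$ is bi-Lipschitz on $\mathbb R$, hence locally absolutely continuous with $L^{-1}\le H'\le L$ almost everywhere. This gives $H'(\theta)>0$ almost everywhere. The map $P$ is then ACL, since it is absolutely continuous in $\theta$ on each horizontal line and the identity in $s$. It is also a.e. differentiable with $DP=\operatorname{diag}(1,H'(\theta))$. Its pointwise dilatation is $\max\{H',1/H'\}$, which is essentially bounded. By the analytic definition, $P$ is quasiconformal with $K(P)=\operatorname*{ess\,sup}\max\{H',1/H'\}$. The dilatation is constant along each line $\theta=\text{const}$, so the essential supremum over the strip equals the essential supremum over $\theta\in[0,2\pi]$. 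Conformal conjugation preserves $K$, and this yields \eqref{eq:radial-qc-dilatation}.

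For necessity, suppose $E_h$ is $K$-quasiconformal. Then $P$ is $K$-quasiconformal on the half-plane. Quasiconformal maps are ACL and a.e. differentiable with $|DP|^2\le K J_P$ a.e. Restricting to almost every line $s=\text{const}$ gives that $H$ is locally absolutely continuous. The Jacobian is $J_P=H'$, and the differential is $\operatorname{diag}(1,H')$ at a.e. point. The distortion inequality forces $\max\{1,H'\}^2\le K H'$ a.e., that is, $K^{-1}\le H'\le K$ a.e. Since $H$ is monotone, I also want to exclude a singular part in $H$. This follows from the Lusin (N) property of quasiconformal maps: $H$ is absolutely continuous on almost every horizontal line, and $P$ has the same $\theta$-profile on every line, so $H$ is absolutely continuous outright. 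Integrating then gives $K^{-1}|\theta_1-\theta_2|\le|H(\theta_1)-H(\theta_2)|\le K|\theta_1-\theta_2|$. This makes $h$ bi-Lipschitz for $d_\T$ (the conversion is routine for arcs shorter than half the circle), and it also yields the equality of $K$.

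The main obstacle is this rigorous passage from quasiconformality of $E_h$ to absolute continuity and bi-Lipschitz bounds for $H$. The plan is to lean on the ACL property on almost every line together with $\theta$-translation invariance of the structure in $s$. As a fallback, one can use the metric definition of $K$ applied to small rectangles $[s,s+\epsilon]\times[\theta,\theta+\epsilon]$: these are mapped to rectangles of side ratio $(H(\theta+\epsilon)-H(\theta))/\epsilon$, and module distortion bounds this ratio between $K^{-1}$ and $K$ directly.
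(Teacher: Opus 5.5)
Your argument is correct, but it takes a different route from the paper. The paper computes nothing. It quotes the unit-circle case of Theorem~4.2 in Kalaj's paper for the equivalence. It uses radial homogeneity to pass between quasiconformality on $\D$ and on $\mathbb C$ with the same maximal dilatation. It then reads off the dilatation formula from Kalaj's Theorem~2.2, equation~(2.11).

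You instead note that the logarithm conformally conjugates $E_h$ on $\D\setminus\{0\}$ to the product map $P(s+i\theta)=s+iH(\theta)$ on the left half-plane. After removing the puncture, everything reduces to the a.e.\ differential $\operatorname{diag}(1,H'(\theta))$:
\begin{itemize}
\item Sufficiency and the exact value $K(E_h)=\operatorname*{ess\,sup}\max\{H',1/H'\}$ follow from the analytic definition, Fubini, and conformal invariance of the pointwise dilatation.
\item Necessity follows from absolute continuity on lines $s=\mathrm{const}$ together with $\|DP\|^2\le K J_P$. This inequality forces $H'>0$ and $K^{-1}\le H'\le K$ almost everywhere, and integrating gives the bi-Lipschitz bound.
\end{itemize}

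The citation buys brevity and places the lemma inside Kalaj's more general framework of radial extensions of bi-Lipschitz parametrizations of starlike curves. Your route buys a self-contained, elementary proof that uses only standard facts: locality of $K$-quasiconformality, removability of a point, and the ACL property with a.e.\ differentiability. Your rectangle-and-modulus fallback is a complete necessity argument by itself. It gives $K^{-1}\le (H(\theta+\epsilon)-H(\theta))/\epsilon\le K$ for every $\epsilon>0$ with no differentiability input.

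One small point: the appeal to the Lusin (N) property is redundant. Absolute continuity of $\theta\mapsto P(s_0+i\theta)$ on a single good line $s=s_0$ already gives local absolute continuity of $H$, since the profile is the same on every line.
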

\end{samepage}

\begin{proof}
The equivalence is the case of the unit circle in
\cite[Theorem~4.2]{Kalaj2014}.
By radial homogeneity, quasiconformality on the disk is equivalent to
quasiconformality on the whole plane, with the same maximal dilatation.
Under these conditions, the lift $H$ is bi-Lipschitz, hence locally
absolutely continuous with derivative bounded above and bounded
away from zero almost everywhere.
Formula~\eqref{eq:radial-qc-dilatation} follows from
\cite[Theorem~2.2, equation~(2.11)]{Kalaj2014}.
\end{proof}

\begin{prop}
\label{prop:radial-extension-differentials}
Let $h\in\operatorname{BiLip}(\T)$, and let $H$ be a lift of $h$.
Then $E_h$ is quasiconformal if $h$ preserves orientation and
antiquasiconformal if $h$ reverses orientation. Moreover,
\[
 K(E_h)
 =\operatorname*{ess\,sup}_{\theta\in[0,2\pi]}
 \max\left\{|H'(\theta)|,\frac{1}{|H'(\theta)|}\right\}.
\]
For an antiquasiconformal map $F$, we set
$K(F)=K(F\circ J)$, where $J(z)=\overline z$.
\end{prop}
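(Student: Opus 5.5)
The proof will split according to the orientation of $h$ and reduce the reversing case to Lemma~\ref{lem:radial-extension-qc} by composing with $J(z)=\overline z$. If $h$ preserves orientation, then $h\in\operatorname{BiLip}^{+}(\T)$, and the lemma gives directly that $E_h$ is quasiconformal with $K(E_h)=\operatorname*{ess\,sup}\max\{H',1/H'\}$. Since $H$ is increasing, $H'=|H'|$ almost everywhere, which is the displayed formula. If $h$ reverses orientation, put $g=h\circ J|_{\T}$, which preserves orientation and is still bi-Lipschitz because $J|_\T$ is an isometry for $d_\T$.

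Radial extension commutes with composition: $E_{h_1}\circ E_{h_2}=E_{h_1\circ h_2}$, and $E_{J|_\T}=J$ since $J$ fixes modulus and acts by conjugation on $\T$. Hence $E_h\circ J=E_{h\circ J|_\T}=E_g$. By Lemma~\ref{lem:radial-extension-qc}, $E_g$ is quasiconformal, so $E_h=E_g\circ J$ is antiquasiconformal. With the convention stated, $K(E_h)=K(E_h\circ J)=K(E_g)$.

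It then remains to relate lifts. If $H$ is a lift of $h$, so that $h(e^{i\theta})=e^{iH(\theta)}$ with $H(\theta+2\pi)=H(\theta)-2\pi$ in the reversing case, then $G(\theta):=H(-\theta)$ satisfies $g(e^{i\theta})=h(e^{-i\theta})=e^{iG(\theta)}$ and $G(\theta+2\pi)=G(\theta)+2\pi$. So $G$ is an admissible lift of $g$. Moreover, $G'(\theta)=-H'(-\theta)=|H'(-\theta)|$ almost everywhere, by local absolute continuity of the bi-Lipschitz function $H$. The essential supremum over $[0,2\pi]$ of $\max\{G',1/G'\}$ equals that of $\max\{|H'|,1/|H'|\}$, because $\theta\mapsto-\theta$ preserves null sets and, by periodicity of $H'$, maps a period onto a period.

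The main point needing care is bookkeeping rather than analysis. One must check that the convention $K(F)=K(F\circ J)$ matches composition on the right, which is the side where $J$ enters via $E_h\circ J=E_g$. One must also check that the sign of $H'$ is handled correctly for a decreasing lift. Using $J\circ E_h=E_{J\circ h}$ instead would also work, since $K(J\circ F\circ J)=K(F)$.
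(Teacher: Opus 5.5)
Your proposal is correct and follows the paper's proof essentially step for step: the orientation-preserving case comes directly from Lemma~\ref{lem:radial-extension-qc}, and the reversing case uses $g=h\circ J|_{\T}$, the identity $E_g=E_h\circ J$, the lift $G(\theta)=H(-\theta)$ with $G'(\theta)=|H'(-\theta)|$ almost everywhere, and the $2\pi$-periodicity of $|H'|$. Your extra checks fill in details the paper leaves implicit: the relation $H(\theta+2\pi)=H(\theta)-2\pi$ for a decreasing lift, and $H'=|H'|$ in the increasing case.
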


\begin{proof}
When $h$ preserves orientation, the conclusion follows from
Lemma~\ref{lem:radial-extension-qc}.
If $h$ reverses orientation, set $j=J|_{\T}$ and $g=h\circ j$.
Then $g\in\operatorname{BiLip}^{+}(\T)$ and
$E_g=E_h\circ J$, so the same lemma shows that $E_h$
is antiquasiconformal and $K(E_h)=K(E_g)$.
The map $G(\theta)=H(-\theta)$ is a lift of $g$, with
$G'(\theta)=|H'(-\theta)|$ almost everywhere.
Applying the dilatation formula in that lemma and using
the $2\pi$-periodicity of $|H'|$ gives the stated formula.
\end{proof}

For a fixed quasiconformal factor with identity boundary
values, the radial extension criterion determines exactly
which boundary maps yield elements of $\Pc$.

\begin{samepage}
\begin{thm}
\label{thm:qc-kernel-classification}
Let $\Psi:\D\to\D$ be a quasiconformal homeomorphism with
identity boundary values. For any circle homeomorphism $h$ that preserves
orientation, set $\Phi=\Psi\circ E_h$.
Then the following conditions are equivalent:
\begin{enumerate}
\renewcommand{\labelenumi}{\textup{(\roman{enumi})}}
\item $\Phi\in\Pc$.
\item $h\in\operatorname{BiLip}^{+}(\T)$.
\item $\Phi$ is a quasiconformal homeomorphism of $\D$.
\end{enumerate}
\end{thm}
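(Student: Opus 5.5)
The plan is to prove the cycle (ii)$\Rightarrow$(i), (i)$\Rightarrow$(ii), (ii)$\Rightarrow$(iii) and (iii)$\Rightarrow$(ii). The common input is that $\Psi$ is quasiconformal with identity boundary values. By Lemma~\ref{lem:qc-rho-uniform} it is a $\rho$-uniform homeomorphism. By Theorem~\ref{thm:explicit-continuous-kernel}, this places $\Psi\in\Kc$, so $\Psi\in\Pc$.

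For (ii)$\Rightarrow$(i), $E_h\in\Pc$ when $h\in\operatorname{BiLip}(\T)$, as noted in the proof of Corollary~\ref{cor:continuous-semidirect-product}. Since $\Pc$ is a group, $\Phi=\Psi\circ E_h\in\Pc$.

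For (i)$\Rightarrow$(ii), first check that $E_h$ is a homeomorphism of $\D$ for every circle homeomorphism $h$, with inverse $E_{h^{-1}}$. Then $E_h=\Psi^{-1}\circ\Phi$, with $\Psi^{-1}\in\Kc\subset\Pc$ and $\Phi\in\Pc$. Hence $E_h\in\Pc$, since $\Pc$ is a group and $E_h$ is continuous. Its boundary map is $h$, because $E_h(r\zeta)=rh(\zeta)\to h(\zeta)$ uniformly. Theorem~\ref{thm:weighted-continuous-characterization} then gives $h\in\operatorname{BiLip}(\T)$, and orientation preservation is part of the hypothesis. Alternatively, apply Theorem~\ref{thm:weighted-continuous-characterization} directly to $\Phi$: it gives $h_\Phi=h_\Psi\circ h=h$ bi-Lipschitz. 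This avoids the group argument and only needs that $\Phi$ extends continuously with boundary map $h$.

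For (ii)$\Rightarrow$(iii), Lemma~\ref{lem:radial-extension-qc} shows that $E_h$ is quasiconformal. A composition of quasiconformal maps is quasiconformal, with $K(\Phi)\le K(\Psi)K(E_h)$.

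For (iii)$\Rightarrow$(ii), write $E_h=\Psi^{-1}\circ\Phi$. Since $\Psi^{-1}$ is quasiconformal, $E_h$ is a composition of quasiconformal homeomorphisms of $\D$, hence quasiconformal. Lemma~\ref{lem:radial-extension-qc} then yields $h\in\operatorname{BiLip}^{+}(\T)$.

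The only delicate point is in this last step: $E_h$ must be a genuine homeomorphism of $\D$ so that the composition makes sense. This holds because $E_h$ is a continuous bijection with continuous inverse $E_{h^{-1}}$; continuity at $0$ follows from $|E_h(z)|=|z|$. The quasiconformal composition rule for orientation-preserving maps then applies. Everything else is routine bookkeeping with the earlier results, so I expect no real obstacle beyond confirming that Lemma~\ref{lem:radial-extension-qc} applies to an arbitrary orientation-preserving $h$, which is exactly how it is stated.
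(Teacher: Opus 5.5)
Your proposal is correct and follows essentially the same route as the paper. You place $\Psi\in\Kc$ via Lemma~\ref{lem:qc-rho-uniform} and Theorem~\ref{thm:explicit-continuous-kernel}, prove (i)$\Rightarrow$(ii) by applying Theorem~\ref{thm:weighted-continuous-characterization} to $\Phi$ with $h_\Phi=h$ (your second variant, which is the paper's), prove (ii)$\Rightarrow$(i) from $E_h\in\Pc$ and the group structure of Corollary~\ref{cor:continuous-semidirect-product}, and obtain (ii)$\Leftrightarrow$(iii) from Lemma~\ref{lem:radial-extension-qc} together with $E_h=\Psi^{-1}\circ\Phi$; the paper also records the defect estimate $a\delta(z)\leq\delta(\Phi(z))\leq b\delta(z)$ for every $h$, which the equivalence does not need but the subsequent remark uses.
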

\end{samepage}

\begin{proof}
By Lemma~\ref{lem:qc-rho-uniform}, $\Psi$ and $\Psi^{-1}$
are uniformly continuous with respect to $\rho$.
The identity boundary values and
Theorem~\ref{thm:explicit-continuous-kernel} imply $\Psi\in\Kc$.
In particular, there are constants $0<a\leq b<\infty$ such that
\(
 a\delta(z)\leq\delta(\Psi(z))\leq b\delta(z)\) for \(z\in\D\).
Since $E_h$ preserves modulus, the same constants give, for every
choice of $h$,
\begin{equation}\label{eq:qc-kernel-defect-automatic}
 a\delta(z)\leq\delta(\Phi(z))\leq b\delta(z),
 \qquad z\in\D.
\end{equation}
The boundary map of $\Phi$ is $h$.

Condition \textup{(i)} implies \textup{(ii)} by
Theorem~\ref{thm:weighted-continuous-characterization}.
If \textup{(ii)} holds, Lemma~\ref{lem:radial-extension-qc}
shows that $E_h$ is quasiconformal.
Composition gives \textup{(iii)} and the estimate
\begin{equation}\label{eq:qc-composition-bound}
 K(\Phi)\leq K(\Psi)K(E_h).
\end{equation}
Corollary~\ref{cor:continuous-semidirect-product} gives
\textup{(i)}.

Conversely, if \textup{(iii)} holds, then
$E_h=\Psi^{-1}\circ\Phi$ is quasiconformal.
Lemma~\ref{lem:radial-extension-qc} therefore gives
\textup{(ii)}.
Under the equivalent conditions, $h_\Phi=h$ and
$\Phi\circ E_{h^{-1}}=\Psi$ identify $\Psi$ as the canonical
kernel of $\Phi$.
\end{proof}

\begin{remark}
\label{rem:qc-kernel-versus-whole-map}
\begin{enumerate}
\renewcommand{\labelenumi}{\textup{(\roman{enumi})}}

\item
The boundary defect condition in the definition of $\Pc$
does not follow from preservation of Carleson sequences in
both directions, even in the quasiconformal class; see
Example~\ref{ex:power-stretch}.
For a fixed quasiconformal kernel $\Psi$ with identity boundary
values, however, estimate~\eqref{eq:qc-kernel-defect-automatic}
holds throughout the family $\Phi=\Psi\circ E_h$, including
choices of $h$ that are not bi-Lipschitz.
Such choices yield maps outside $\Pc$ despite satisfying the
boundary defect comparison.

\item
Theorem~\ref{thm:qc-kernel-classification} extends to arbitrary
circle homeomorphisms $h$ if its condition \textup{(ii)} is replaced
by $h\in\operatorname{BiLip}(\T)$ and its condition \textup{(iii)}
requires $\Phi$ to be a quasiconformal or antiquasiconformal
homeomorphism of $\D$.
Here, writing $J(z)=\overline z$, antiquasiconformality means
that $\Phi\circ J$ is quasiconformal.
Indeed, let $j=J|_{\T}$. If $h$ reverses orientation, then
$h\circ j$ preserves orientation and
\(
 \Phi\circ J=\Psi\circ E_{h\circ j}.
\)
Since $\Pc$ is a group containing $J$ and $j$ is an isometry
of $\T$, the conclusion for maps that reverse orientation follows by applying
Theorem~\ref{thm:qc-kernel-classification} to this factorization.
\end{enumerate}
\end{remark}

\begin{samepage}
\begin{cor}
\label{cor:all-qc-boundary-defect}
Let $F:\D\to\D$ be a quasiconformal or antiquasiconformal
homeomorphism, with boundary map $h_F$.
Then the following conditions are equivalent:
\begin{enumerate}
\renewcommand{\labelenumi}{\textup{(\roman{enumi})}}
\item $F\in\Pc$.
\item $h_F\in\operatorname{BiLip}(\T)$.
\item $1-|F(z)|^2\asymp 1-|z|^2$ for $z\in\D$.
\end{enumerate}
\end{cor}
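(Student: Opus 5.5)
The plan is to reduce Corollary~\ref{cor:all-qc-boundary-defect} to Theorem~\ref{thm:weighted-continuous-characterization} by first showing that $F$ is a $\rho$-uniform homeomorphism of $\D$.

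First, reduce to the orientation-preserving case. Let $J(z)=\overline z$ and replace $F$ by $J\circ F$ when $F$ is antiquasiconformal. This replacement leaves each of \textup{(i)}--\textup{(iii)} unchanged:
\begin{enumerate}
\renewcommand{\labelenumi}{\textup{(\alph{enumi})}}
\item $J\in\Pc$, since $J\in\Cbi$ by the proof of Theorem~\ref{thm:continuous-bidirectional-boundary} and $|J(z)|=|z|$, and $\Pc$ is a group by Corollary~\ref{cor:continuous-semidirect-product}; hence \textup{(i)} is unchanged.
\item $h_{J\circ F}=J|_\T\circ h_F$ and $J|_\T$ is an isometry of $d_\T$, so \textup{(ii)} is unchanged.
\item $1-|J(F(z))|^2=1-|F(z)|^2$, so \textup{(iii)} is unchanged.
\end{enumerate}
So we may assume that $F$ is $K$-quasiconformal.

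Next, Lemma~\ref{lem:qc-rho-uniform} shows that $F$ and $F^{-1}$ are uniformly continuous with respect to $\rho$. Hence $F$ is a $\rho$-uniform homeomorphism of $\D$, and by Lemma~\ref{lem:uniform-boundary-rigidity} its boundary map $h_F$ is well defined.

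Theorem~\ref{thm:weighted-continuous-characterization} now applies directly to $F$. Its condition (ii) is our \textup{(ii)}, because the $\rho$-uniformity part is automatic. Its condition (iii) is our \textup{(iii)}, because $1+|z|\in[1,2)$ makes $\delta(F(z))\asymp\delta(z)$ equivalent to $1-|F(z)|^2\asymp1-|z|^2$. Its condition (i) is our \textup{(i)}. This gives the equivalence of all three conditions.

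I do not expect a real obstacle. The only point needing care is checking that conjugation preserves each condition, and this is settled by the group property of $\Pc$ together with the modulus and isometry invariance of $J$.
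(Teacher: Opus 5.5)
Your proof is correct and follows the paper's argument: Lemma~\ref{lem:qc-rho-uniform} makes $F$ a $\rho$-uniform homeomorphism, and Theorem~\ref{thm:weighted-continuous-characterization} then gives the equivalence directly. The only difference is that the paper does not reduce all three conditions to the orientation-preserving case; it simply notes that $J(z)=\overline z$ is a $\rho$-isometry, so $\rho$-uniformity carries over to antiquasiconformal $F$, and the theorem itself has no orientation hypothesis.
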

\end{samepage}

\begin{proof}
By Lemma~\ref{lem:qc-rho-uniform}, every quasiconformal
homeomorphism of $\D$ and its inverse are uniformly
continuous with respect to $\rho$.
Since complex conjugation is an isometry for $\rho$,
the same conclusion holds for antiquasiconformal
homeomorphisms.
Thus, in either case, $F$ and $F^{-1}$ are uniformly
continuous with respect to $\rho$.
The equivalence follows from
Theorem~\ref{thm:weighted-continuous-characterization}.
\end{proof}

\begin{remark}
For quasiconformal homeomorphisms of $\D$ with identity
boundary values, bounded hyperbolic displacement also follows
from \cite{VuorinenZhang2014}.
Lemma~\ref{lem:uniform-boundary-rigidity} provides the corresponding
conclusion for the larger class of $\rho$-uniform
homeomorphisms with identity boundary values considered here.
\end{remark}

\section*{Declaration of competing interest}
The author declares that there are no competing interests that could
have influenced the work reported in this paper.

\section*{Funding}
This research did not receive any specific grant from funding agencies
in the public, commercial, or nonprofit sectors.

\section*{Data availability}
No datasets were generated or analysed in this theoretical study.

\end{document}